\titleformat{\section}{\vskip5pt\large\bfseries}{\thesection.}{0.5em}{\centering}
\titleformat{\subsection}{\vskip5pt\normalsize\bfseries}{\thesubsection.}{0.5em}{}
\newtheorem{theorem}{Theorem}[section]
\newtheorem{lemma}[theorem]{Lemma}
\newtheorem{remark}[theorem]{Remark}
\theoremstyle{definition}
\newtheorem{corollary}{Corollary}
\def\R{\mathbb{R}}
\def\T{\mathbb{T}}
\def\U{\mathcal{U}}
\def\F{\mathcal{F}}
\def\d{\mathrm{d}}
\numberwithin{equation}{section}
\begin{document}

\title[]{Computing rough solutions of the KdV equation below ${\bf L^2}$}

\author[]{Jiachuan Cao}
\address{Jiachuan Cao: Department of Applied Mathematics, The Hong Kong Polytechnic University,
Hung Hom, Hong Kong}
\email{jiachuan.cao@polyu.edu.hk}

\author[]{Buyang Li}
\address{Buyang Li: Department of Applied Mathematics, The Hong Kong Polytechnic University,
Hung Hom, Hong Kong}
\email{buyang.li@polyu.edu.hk}

\author[]{Yifei Wu}
\address{Yifei Wu: School of Mathematical Sciences, Nanjing Normal University, Nanjing, China}
\email{yerfmath@gmail.com}

\author[]{Fangyan Yao}
\address{Fangyan Yao: Department of Applied Mathematics, The Hong Kong Polytechnic University,
Hung Hom, Hong Kong}
\email{fangyan.yao@polyu.edu.hk}

\subjclass[2010]{65M12, 65M15, 65M70, 35Q53}


\keywords{KdV equation, low regularity, Bourgain space, error estimates}

\maketitle
\vspace{-20pt}

\begin{abstract}\noindent
{\small 
We establish a novel numerical and analytical framework for solving the Korteweg--de Vries (KdV) equation in the negative Sobolev spaces, where classical numerical methods fail due to their reliance on high regularity and inability to control nonlinear interactions at low regularities. Numerical analysis is established by combining a continuous reformulation of the numerical scheme, the Bourgain-space estimates for the continuous reformulation, and a rescaling strategy that reduces the reformulated problem to a small initial value problem, which allow us to bridge a critical gap between numerical analysis and theoretical well-posedness by designing the first numerical method capable of solving the KdV equation in the negative Sobolev spaces. The numerical scheme is proved to have nearly optimal-order convergence with respect to the spatial degrees of freedom in the $H^{-\frac{1}{2}}$ norm for initial data in $H^s$, with $-\frac{1}{2} < s \leq 0$, a result unattainable by existing numerical methods.}
\end{abstract}


\setlength\abovedisplayskip{4pt}
\setlength\belowdisplayskip{4pt}

\section{Introduction}

The Korteweg-de Vries (KdV) equation is a fundamental mod- el in the study of dispersive partial differential equations, with applications ranging from shallow water waves to acoustic wave propagation in plasmas and lattices. 
Over the past decades, significant research has been devoted to both the mathematical theory and numerical solutions of the KdV equation. 

From a theoretical perspective, research on the well-posedness of the KdV equation has witnessed a progressive relaxation of the regularity requirements for the initial data. Bourgain \cite{Bourgain} first established the well-posedness of \eqref{eq:KdV_equation} for initial data in $H^s$ with $s \geq 0$. This result was later extended by Kenig, Ponce, and Vega to the range $s > -\frac{1}{2}$ \cite{Kenig} and refined further at the endpoint by Colliander et al. \cite{Colliander} using Bourgain spaces. 
The range $s\ge -\frac12$ is sharp if one requires the data to solution map to be uniformly continuous on bounded sets.
Recently, the global well-posedness of the KdV equation in the $H^{-1}$ space in the sense that the solution map is only continuous, was achieved through a novel methodology for integrable PDEs \cite{KV-2019}.

On the numerical side, a wide range of methods have been developed to approximate solutions of the KdV equation, including finite difference methods, splitting methods, discontinuous Galerkin methods, and classical exponential integrators \cite{splittingJCP,Spectralkdv1,Fourierkdv,Spectralkdv2,splitting0,splitting1,splitting2,DG2,Ostermann-Su-2020}. However, classical numerical methods typically require the solution to be sufficiently smooth due to their reliance on the boundedness of the time derivative of $u(t)$. For the KdV equation 
on the torus  $\T=[0,1]$, 
\begin{equation}\label{eq:KdV_equation}
	\left\{
	\begin{array}{l}
		{\displaystyle \partial_t u(t,x) + \partial_x^3 u(t,x) = \frac{1}{2} \partial_x \big(u(t,x)^2\big), \quad x \in \mathbb{T}, \; t \in (0,T],} \\[2mm]
		{\displaystyle u(0,x) = \phi(x), \quad x \in \mathbb{T},}
	\end{array}
	\right.
\end{equation}
classical time discretization methods typically require the solution to belong to 
$H^3$ in order to achieve first-order convergence in the $L^2$ norm. However, this regularity requirement is too restrictive for computing rough solutions of the KdV equation, particularly when the solution lacks differentiability or even continuity.

In response to these challenges, the development of low-regularity integrators (LRIs) has garnered significant attention. These advanced numerical techniques are designed to relax the regularity constraints on the solution while maintaining stability and accuracy. The first LRI for the KdV equation was introduced by Hofmanová and Schratz \cite{Hofmanova-Schratz-2017}, based on the approximation:
\begin{align}
	u({\sigma_n} + \tau) \approx &e^{-\tau \partial_x^3} u(\sigma_n) + \frac{1}{2} \int_{\sigma_n}^{\sigma_{n+1}} e^{(t - \sigma_{n+1}) \partial_x^3} \partial_x \left( e^{(\sigma_n - t) \partial_x^3} u(\sigma_n) \right)^2 \, dt, \quad  \sigma_n=n\tau.
\end{align}
They proved that this method achieves first-order convergence in $H^1$ for initial data in $H^3$. Later, Wu and Zhao extended this approach to construct LRIs with second-order convergence \cite{Wu-2}. Specifically, their method achieves second-order convergence in $H^\gamma$ when the initial data lies in $H^{\gamma+4}$, where $\gamma \geq 0$. 
Wu and Zhao \cite{Wu-3} also proposed two embedded low-regularity integrators tailored for different regularity settings. The first achieves first-order convergence in $H^\gamma$ for initial data in $H^{\gamma+1}$ for $\gamma > \frac{1}{2}$, while the second achieves second-order convergence in $H^\gamma$ for initial data in $H^{\gamma+3}$ regularity for $\gamma \geq 0$. These advancements illustrate the potential of LRIs in addressing the limitations of classical numerical methods.
Beyond the KdV equation, LRIs have been successfully applied to other nonlinear dispersive equations, such as the nonlinear Schrödinger equation, nonlinear wave equations, and modified KdV equations. For a comprehensive overview of these developments, see \cite{BBS-2024-FoCM,BLW2022,BS,CLL2023,CLLY2024,LW2021,Li-1,LMS,LSZ,NWZ-2022,Ostermann-Schratz-FoCM,Ostermann-Su-2020,RS21,RS-2024-FoCM,Wu-Yao-MCOM} and the references therein.

In \cite{Hofmanova-Schratz-2017, Wu-2, Wu-3}, the minimal regularity requirement for the initial data remains $\frac{3}{2}+$. 
This limitation arises from the treatment of the Burgers-type nonlinear term in the KdV equation, which involves a first-order derivative.
To address cases with lower regularity, Rousset and Schratz \cite{RS-PAM-2022} introduced three filtered time discretization schemes. 
For $u \in C([0,T]; H^s)$ for $s \in (0,3]$, these methods achieve $\frac{s}{3}$-order convergence in the $L^2$ space, utilizing the framework of discrete Bourgain space analysis. This analytical approach was originally established in \cite{ORS-JEMS} for the nonlinear Schrödinger equation. For further applications of discrete Bourgain spaces in numerical methods for other equations, readers are referred to \cite{JLOS-2024-1, JORS-2024-1, JORS-2024-2, ORS-MC}.
More recently, for initial data $u(0) \in H^s(\mathbb{T})$ with $s \in (0,1]$, Li and Wu \cite{Li-1} developed numerical methods with enhanced convergence rates. By employing advanced harmonic analysis techniques, they constructed a scheme that achieves $s$-order convergence in $L^2$.

Despite significant progress, all previously known methods still require strictly positive Sobolev regularity. In contrast, theoretical studies have established that the KdV equation is well-posed in the negative regularity regime. However, classical numerical methods fail in this setting due to their reliance on high regularity and their inability to effectively control nonlinear interactions at extremely low regularities.




In this article, we tackle this challenging problem by introducing a novel numerical analytic framework based on Bourgain spaces and a continuous reformulation strategy inspired by~\cite{Bourgain, Colliander, Kenig, Li-1}. Our approach achieves a convergence order for rough solutions that surpasses all previously known methods, thereby significantly extending the applicability of numerical simulations to physically realistic scenarios, such as those involving near-white noise or Gibbs measure initial data. The main contributions of our work can be summarized clearly as follows:
\begin{itemize}
	\item \textit{First numerical method below $L^2$ regularity:} We present the first numerical integrator that is rigorously proven to be convergent for the KdV equation with initial data in the negative regularity regime $ H^s(\mathbb{T}) $, for $ s \in (-\tfrac{1}{2}, 0] $, achieving a convergence rate that surpasses all previously known numerical methods (see Theorem~\ref{thm:main}). In particular, for $ s = 0 $ (i.e., $ L^2 $ initial data), the proposed method converges in $ H^{-1/2} $ with an order close to $ \tfrac{1}{4} $ — a rate that was previously unattainable.
	
	\item \textit{Optimal convergence order for extremely rough solutions:}  
	Our fully discrete numerical scheme achieves optimal spatial convergence rates for initial data in $H^s(\mathbb{T})$, $s\in(-\frac{1}{2},0]$. Specifically, when applying the Fourier spectral method for spatial discretization under the CFL-type condition $\tau=O(N^{-2+2s-})$, we rigorously establish an optimal convergence rate of $N^{-\frac{1}{2}-s}$ in the $H^{-\frac{1}{2}}(\mathbb{T})$-norm (see Corollary~\ref{rem:fully_discrete}).
	
	\item \textit{Novel analytical framework based on Bourgain spaces and  continuous  reformulation:}  
	To overcome the challenge arising from the failure of standard estimates in controlling the nonlinear term at very low regularities, we introduce a novel analytical framework. Our approach combines Bourgain-space techniques with a carefully designed  continuous  reformulation of the original time-discrete numerical scheme, inspired by the methodology in~\cite{Li-1}. This innovative reformulation allows us, for the first time, to directly utilize powerful Bourgain-space estimates developed for dispersive equations (see e.g.~\cite{Bourgain,Colliander,Kenig}). Consequently, we rigorously control the nonlinear interactions at extremely low regularities, effectively resolving longstanding analytical barriers in the numerical analysis of dispersive PDEs.
	
	\item \textit{Rescaling and continuation techniques:}  
	To overcome the critical lack of additional smallness arising in the Bourgain-space estimates at the endpoint regularity $X_{s,\frac{1}{2}}$ (see e.g.~Lemma 2.11 of~\cite{Tao}), we introduce a rescaling argument inspired by Colliander et al.~\cite{Colliander}. Specifically, by simultaneously rescaling both the original PDE and the numerical scheme, we effectively transform the continuous-in-time reformulation of our discrete scheme into a small-initial-data problem. This rescaling strategy yields two crucial advantages: firstly, it allows us to prove the uniform boundedness of the reformulated numerical solutions in Bourgain spaces independently of the time step $\tau$; secondly, it resolves the critical absence of additional smallness inherent to endpoint Bourgain-space cases, enabling rigorous control of nonlinear integral terms even in large-data regimes (see, for instance, estimates \eqref{eq:e_estimate}--\eqref{eq:condition_lambda} in Section~\ref{sec:proof_of_main_theorem}). These techniques effectively address major analytical hurdles and have potential applications beyond the KdV setting.

	\item \textit{Robust applicability to physically realistic initial data:}  
	Our analytical framework and numerical algorithm can handle a wide range of physically relevant initial regularities, including singular initial data, nearly white noise random initial conditions, and initial data drawn from Gibbs measures. Numerical experiments convincingly validate the robustness and practical feasibility of our method in these challenging scenarios, successfully capturing subtle dispersive phenomena such as the Talbot-like refocusing of singularities. Thus, our results significantly broaden the applicability of numerical simulations to realistic dispersive PDE models arising in physics and engineering.
\end{itemize}

\

\noindent \textbf{Outline of the article.} The rest of this article is organized as follows. In Section~\ref{sec:main-results}, we introduce the numerical scheme and present the main theoretical results, along with the methodology used to prove them.
In Section~\ref{sec:rescaling}, we rescale the original problem and the numerical scheme into $\lambda$-periodic forms. Additionally, we introduce the methodology for constructing the continuation of numerical solutions, which are initially defined only at discrete time nodes. 
Section~\ref{sec:bourgain} provides the necessary definitions related to Bourgain spaces and presents key preliminary results essential for the analysis.
In Section~\ref{sec:stability}, we establish the uniform boundedness of the numerical method by formulating and analyzing an appropriate fixed point problem within the Bourgain space framework.
Section~\ref{sec:proof_of_main_theorem} focuses on deriving estimates for the consistency error and provides the detailed proof of the main theorem, Theorem~\ref{thm:main}.
Finally, in Section~\ref{sec:numerical_experiments}, we present numerical experiments that validate the theoretical findings and demonstrate the practical performance of the proposed numerical method.

\section{Main results and methodology}\label{sec:main-results}
In this section, we present the main results and methodology of this article, with a focus on the development and analysis of a filtered low-regularity integrator. We state the main theorem, which provides error estimates under suitable choices of the filter parameter, and derive a fully discrete scheme using Fourier spectral methods. Furthermore, we outline the proof of the main results, demonstrating how Bourgain spaces and a  continuous  reformulation are combined to establish the theoretical findings.

\subsection{Numerical scheme and main  results}
We consider the filtered low regularity integrator that was first introduced in \cite{Hofmanova-Schratz-2017}:
\begin{align}\label{eq:filtered_LRI}
	u^{n+1}&=e^{-\tau \partial_x^3}u^n+\frac{1}{2}\int_{\sigma_n}^{\sigma_{n+1}}e^{(t-\sigma_{n+1})\partial_x^3}\Pi_{N}\partial_{x}\left(e^{(\sigma_n-t)\partial_x^3}\Pi_{N}u^n\right)^2\d t,\notag\\
	&=e^{-\tau \partial_x^3}u^n+\frac{1}{6}\Pi_{N}\left(e^{-\tau\partial_x^3}\partial_x^{-1}\Pi_{N}u^n\right)^2-\frac{1}{6}\Pi_{N}e^{-\tau \partial_x^3}\left(\partial_x^{-1}\Pi_{N}u^n\right)^2,
\end{align}
with $u^0=\phi$ and $\sigma_n=n\tau$ for $n=0,1,\cdots,L=T/\tau$ being a partition of the time interval. Here $\Pi_N$ denotes the projection operator for the positive integer $N$, defined by 
\begin{align}
	(\widehat{\Pi_N f})_{k}=\left\{
	\begin{array}{l}
		\hat{f}_{k}\quad \text{if}\;\; |k|\leq N,\\[2mm]
		0\quad \;\, \text{if}\;\; |k|> N,
	\end{array}
	\right.\quad \text{where} \quad \hat{f}_k=\int_0^1 e^{-2\pi ikx}f(x)\d x, \quad \text{for}\quad k\in \mathbb{Z}.
\end{align}
We present the following main theorem of this paper:
\begin{theorem}\label{thm:main}
	For given $T>0$ and initial value $\phi\in H^s(\T)$ with $s\in(-\frac{1}{2},0]$  and $\int_{\T}\phi\,\d x=0$, we denote by $u\in L^{\infty}(0,T;H^{s})$ the unique solution of \eqref{eq:KdV_equation}, and denote by $u^n$, the numerical solution given by \eqref{eq:filtered_LRI}. Then, by choosing $N=O(\tau^{-\frac{1}{2-2s}+})$, there exist constants $\tau_0\in (0,1)$, and $C>0$ such that, for time step size $\tau\in(0,\tau_0)$,
	\begin{align}\label{eq:main_thm}
		\max_{0\leq n\leq T/\tau}\|u(\sigma_n)-u^n\|_{H^{-\frac{1}{2}}}\leq C \tau^{\frac{1+2s}{4-4s}-}.
	\end{align}
\end{theorem}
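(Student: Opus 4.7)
The plan follows the blueprint announced in the introduction: rescale to a small-data setting, construct a continuous-in-time reformulation of the discrete scheme, estimate it in a Bourgain space, and compare with the exact solution via consistency plus stability.

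First I would apply the KdV rescaling $u_\lambda(t,x) := \lambda^{-2} u(\lambda^{-3} t, \lambda^{-1} x)$, together with the analogous rescaling of the scheme with step $\tau_\lambda = \lambda^3\tau$, viewed on the $\lambda$-torus $\T_\lambda$. Since $\|\phi_\lambda\|_{H^s_{\lambda}} \lesssim \lambda^{-(1/2+s)}\|\phi\|_{H^s}$ for $s\in(-\tfrac12, 0]$, choosing $\lambda = \lambda(\|\phi\|_{H^s}, T)$ sufficiently large reduces the whole problem to one with arbitrarily small data on a long time interval $[0, \lambda^3 T]$. This rescaling is precisely the device that restores the ``missing'' smallness at the endpoint $b=\tfrac12$ of the Bourgain space $X^\lambda_{s,1/2}$.

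Next, replace the iterates $\{u^n\}$ by a continuous-in-time extension $v(t,x)$ on $[0,\lambda^3 T]$ that agrees with $u^n$ at the nodes $\sigma_n$ and satisfies an equation of the form $\partial_t v + \partial_x^3 v = F_\tau(v)$, with $F_\tau$ engineered so that Duhamel's formula for $v$ on each interval $[\sigma_n,\sigma_{n+1}]$ reproduces the filtered LRI update. This continuous reformulation is the key conceptual step: it converts a discrete object into one amenable to the continuous Bourgain-space machinery developed in \cite{Bourgain, Kenig, Colliander}. Setting up a Picard map in $X^\lambda_{s,1/2}$, the bilinear KdV estimate on $\T_\lambda$ (whose constants are $\lambda$-independent under the KdV scaling) combined with the smallness of $\phi_\lambda$ yields a contraction on a small ball and hence a bound $\|v\|_{X^\lambda_{s,1/2}} \le C\|\phi_\lambda\|_{H^s_\lambda}$, uniform in $\tau$ and $N$. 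The exact rescaled solution $u_\lambda$ enjoys the same bound by classical well-posedness theory.

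The error $e := v - u_\lambda$ then satisfies a Duhamel equation driven by two consistency terms: one from the Fourier cutoff $\Pi_N$, which is of order $N^{-(1/2+s)+}$ in $X^\lambda_{-1/2,1/2}$; and one from the time-discrete replacement of the exact bilinear flow by the LRI quadrature, which after modulation/frequency-decomposition of the quadrature and use of the resonance relation for $\partial_x^3$ on the torus yields a gain of size $\tau^{(1+2s)/(4-4s)-}$. Absorbing the quadratic terms in $e$ by the same bilinear estimate and a standard continuation argument gives
\begin{equation*}
  \|e\|_{X^\lambda_{-1/2,1/2}} \;\lesssim\; \tau^{\frac{1+2s}{4-4s}-} + N^{-(1/2+s)+}.
\end{equation*}
Balancing the two terms fixes the CFL-type choice $N = O(\tau^{-1/(2-2s)+})$, and undoing the rescaling together with the embedding $X^\lambda_{-1/2,1/2}\hookrightarrow C_t H^{-1/2}_\lambda$ delivers the claimed rate $\tau^{(1+2s)/(4-4s)-}$ in $H^{-1/2}(\T)$ at every node $\sigma_n$.

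The main obstacle is the time-discrete consistency estimate at the endpoint $b=\tfrac12$. Unlike the subcritical case $b<\tfrac12$, no gain can be harvested by shrinking the time window; every power of $\tau$ must instead come from modulation decompositions of the LRI quadrature, from the cutoff $\Pi_N$, or from the rescaling-induced smallness. Making these three sources of gain combine into precisely the exponent $(1+2s)/(4-4s)$ under the optimal choice of $N$ is the delicate heart of the argument, and is the reason the $\lambda$-uniform bilinear KdV estimate on the torus is indispensable.
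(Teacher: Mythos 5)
Your proposal follows essentially the same architecture as the paper: rescale to small data, build a continuous-in-time extension of the scheme, run a fixed-point argument in a Bourgain space, and close the error equation by combining a projection-error term of size $N^{-\frac12-s}$ with a time-discretization term that balances against it under $N=O(\tau^{-\frac{1}{2-2s}+})$. Two points, however, are genuine gaps rather than omitted routine details.

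First, the embedding you invoke at the end, $X^\lambda_{-\frac12,\frac12}\hookrightarrow C_tH^{-\frac12}_\lambda$, is false at the endpoint $b=\tfrac12$; it holds only for $b>\tfrac12$. This is precisely why the paper (following Colliander et al.) works in the strictly smaller space $Y^s$, whose norm carries the additional component $\|\langle k\rangle^s\tilde w\|_{L^2((\d k)_\lambda)L^1(\d\sigma)}$, together with the companion space $Z^s$ for the Duhamel term. Running the whole contraction in $X_{s,\frac12}$ and then passing to $L^\infty_tH^s$ does not close; you must set up the iteration in $Y^s$ from the start (Lemmas~\ref{lem:free_solution}, \ref{lem:energy_estimate} and the bilinear estimate are all formulated for the $Y^s$/$Z^s$ pair). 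This is fixable but it is not cosmetic: the choice of space is what makes the pointwise-in-time evaluation at the nodes $\sigma_n$ legitimate.

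Second, and more seriously, your argument only covers the short-time case. The rescaling makes the data small, but the time cutoff $\eta$ in the fixed-point formulation restricts the contraction to a unit rescaled interval, i.e.\ to $[0,\lambda^{-3}]$ in original time; smallness of $\phi_\lambda$ does not let you solve on all of $[0,\lambda^3T]$ in one shot. To reach a general $T$ one must iterate the local estimate over $O(T\lambda^3)$ slices, and each restart requires a bound on $\|u^{M_0}\|_{H^s}$ (or $H^\gamma$) of the numerical solution at the slice endpoint in order to keep $\lambda$ fixed. The naive bound $\|u^{M_0}_\lambda\|\le C^\#\|\phi_\lambda\|$ with $C^\#>1$ compounds exponentially across slices and destroys the argument. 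The paper resolves this by taking $N\sim\tau^{-\frac{1}{2-2s}+\varepsilon}$ slightly off the balancing choice, harvesting the extra power $\tau^{(2-2s)\varepsilon}$ to prove a uniform bound $\|u^n\|_{H^\gamma}\le\|u\|_{L^\infty(0,T;H^s)}+1$ for a fixed $\gamma<s$ (using that $\Pi_{>N}u^n=\Pi_{>N}e^{-n\tau\partial_x^3}\phi$ plus Bernstein), which decouples $\lambda$ from $n$ and permits the iteration. Your "standard continuation argument" absorbs the quadratic error terms within one slice but does not address this; without it the theorem is only proved for $T<\lambda^{-3}$.

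A minor remark: the paper's consistency estimate for the quadrature terms ($\mathcal{A}_1$, $\mathcal{A}_2$) proceeds via the $L^4$ Strichartz embedding $X_{0,\frac13}\hookrightarrow L^4_{t,x}$, its dual, and Bernstein, yielding the explicit bound $\lambda^{-\frac32}\tau N^{3(\frac12-s)}$; no resonance decomposition of the phase is needed. Your description of this step is vaguer and states the $\tau$-rate before $N$ has been chosen, which makes the subsequent "balancing" circular as written; the clean statement is that the two consistency contributions are $N^{-\frac12-s}$ and $\tau N^{3(\frac12-s)}$, and equating them produces both the CFL relation and the final rate.
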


\begin{remark}
	\upshape 
	Without loss of generality, we can assume that $u(0,x)=\phi(x)$ has zero mean in the spatial variable, i.e., $\int_\T \phi \, \d x = 0$. Otherwise, we can redefine the solution using the transformation
	$$
	\tilde{u}(t,x) := u(t,x - t \, \Pi_0{\phi}) - \Pi_0{\phi},
	$$
	which satisfies \eqref{eq:KdV_equation} with the initial value 
	$$
	\tilde{\phi} = \phi - \Pi_0{\phi},
	$$
	where $\tilde{\phi}$ has zero mean, i.e., $\int_\T \tilde{\phi} \, \d x = 0$.
\end{remark}

By applying Fourier spectral discretization in space, we derive the following fully discrete numerical scheme:
\begin{align}\label{eq:fully_discrete}
	u_N^{n+1} = &\, e^{-\tau \partial_x^3}\Pi_{N_c}u_N^n 
	+\frac{1}{6}\Pi_{N}\left(e^{-\tau\partial_x^3}\partial_x^{-1}\Pi_{N}u_N^n\right)^2
	-\frac{1}{6}\Pi_{N}e^{-\tau \partial_x^3}\left(\partial_x^{-1}\Pi_{N}u_N^n\right)^2,
\end{align}
with initial condition $u_N^0=\Pi_{N_c}\phi$. Here, we set $N_c=C_L N$ with a constant $C_L\geq 1$, which can be suitably chosen to balance the errors arising from the linear and nonlinear discretizations. Note that under the condition $N=O(\tau^{-\frac{1}{2-2s}+})$, we have $N^{-\frac{1}{2}-s}=O(\tau^{\frac{1+2s}{4-4s}-})$. Thus, as a direct consequence of Theorem~\ref{thm:main}, we obtain the following error estimate:

\begin{corollary}\label{rem:fully_discrete}
	Under the conditions of Theorem~\ref{thm:main}, let $u_N^n$ denote the numerical solution of the fully discrete scheme \eqref{eq:fully_discrete} with $N_c=C_L N$, $C_L\geq 1$. Then, under the CFL-type condition $\tau=O(N^{-2+2s-})$, there exist constants $N_0\geq 1$ and $C>0$ such that for all $N\geq N_0$,
	\begin{align}
		\max_{0\leq n\leq T/\tau}\|u(\sigma_n)-u_N^n\|_{H^{-\frac{1}{2}}}\leq C N^{-\frac{1}{2}-s}.
	\end{align}
\end{corollary}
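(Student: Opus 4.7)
My plan is to split the total error into time- and space-discretization components,
\[
u(\sigma_n)-u_N^n = \bigl(u(\sigma_n)-u^n\bigr)+\bigl(u^n-u_N^n\bigr),
\]
and bound each in the $H^{-1/2}$-norm. The time-discretization error is handled by invoking Theorem~\ref{thm:main}: the CFL-type condition $\tau=O(N^{-2+2s-})$ is the inverse of the step-size assumption $N=O(\tau^{-1/(2-2s)+})$ of that theorem (the arbitrarily small exponents hidden by the ``$-$'' notation can be matched consistently), so Theorem~\ref{thm:main} produces $\|u(\sigma_n)-u^n\|_{H^{-1/2}}\le C\tau^{(1+2s)/(4-4s)-}$. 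A simple arithmetic check using $(2-2s)\cdot(1+2s)/(4-4s)=(1+2s)/2$ converts this to $C N^{-1/2-s}$ under the CFL relation.

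The heart of the argument is to identify the spatial error exactly as
\[
u^n-u_N^n = e^{-n\tau\partial_x^3}(I-\Pi_{N_c})\phi \qquad \text{for all } n\ge 0.
\]
I would prove this in two steps. First, applying $\Pi_N$ to \eqref{eq:filtered_LRI} and \eqref{eq:fully_discrete}, and using that $\Pi_N$ commutes with $e^{-\tau\partial_x^3}$, that the nonlinear increment already lies in the range of $\Pi_N$, and that $\Pi_N\Pi_{N_c}=\Pi_N$ since $N\le N_c$, one sees that $\Pi_N u^n$ and $\Pi_N u_N^n$ obey identical recursions with the same initial data $\Pi_N\phi$, hence coincide for every $n$. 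Second, an easy induction shows $u_N^n$ is supported in frequencies $|k|\le N_c$, and the high-frequency complements propagate freely:
\[
(I-\Pi_N)u^n = e^{-n\tau\partial_x^3}(I-\Pi_N)\phi,\qquad (\Pi_{N_c}-\Pi_N)u_N^n = e^{-n\tau\partial_x^3}(\Pi_{N_c}-\Pi_N)\phi,
\]
since the nonlinear corrections vanish on these modes in both schemes. Subtracting yields the claimed identity.

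The estimate is then immediate: $e^{-n\tau\partial_x^3}$ is an isometry on $H^{-1/2}$, so
\[
\|u^n-u_N^n\|_{H^{-1/2}}=\|(I-\Pi_{N_c})\phi\|_{H^{-1/2}}\le N_c^{-1/2-s}\|\phi\|_{H^s}\le C N^{-1/2-s}
\]
by the standard tail bound for Sobolev truncation combined with $N_c=C_L N$. Combining with the time-discrete estimate closes the proof. I do not anticipate any serious analytic obstacle, since all the genuine difficulty has been absorbed into Theorem~\ref{thm:main}; the only point deserving careful verification is the identity $\Pi_N u^n=\Pi_N u_N^n$, which crucially relies on both schemes filtering the nonlinear term through the \emph{same} projection $\Pi_N$, so that the modes strictly between $N$ and $N_c$ are governed only by the free linear flow in both cases.
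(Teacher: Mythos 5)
Your proof is correct and follows the same overall route as the paper, which simply declares the corollary ``a direct consequence'' of Theorem~\ref{thm:main} after observing that the CFL relation converts $\tau^{\frac{1+2s}{4-4s}-}$ into $N^{-\frac12-s}$. The one piece of genuine content you add — and which the paper leaves implicit — is the exact identity $u^n-u_N^n=e^{-n\tau\partial_x^3}(I-\Pi_{N_c})\phi$, obtained from $\Pi_N u^n=\Pi_N u_N^n$ together with the free propagation of the modes above $N$ in both schemes; this is a clean and valid way to make the spatial-error contribution rigorous, and the remaining steps (isometry of $e^{-n\tau\partial_x^3}$ on $H^{-\frac12}$ and the tail bound $\|(I-\Pi_{N_c})\phi\|_{H^{-\frac12}}\le N_c^{-\frac12-s}\|\phi\|_{H^s}$, which uses $s>-\frac12$) are all sound.
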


\subsection{Sketch of the proof}
For the readers' convenience, we outline the proof of Theorem~\ref{thm:main} as follows, based on the Bourgain-space estimates at the continuous level, and present the details of the proof in the following sections. 

First, we introduce an appropriate rescaling for both the exact solution $u$ of the original KdV equation \eqref{eq:KdV_equation} and the numerical solution $u^n$ obtained from the filtered low-regularity integrator \eqref{eq:filtered_LRI}:
\begin{align*}
	u_{\lambda}(t,x) = \lambda^{-2}u\left(\frac{t}{\lambda^3}, \frac{x}{\lambda}\right), 
	\quad\text{and}\quad 
	u^n_{\lambda}(x) = \lambda^{-2}u^n\left(\frac{x}{\lambda}\right),
\end{align*}
where $\lambda$ is a constant independent of $N$ and $\tau$. By applying the variation-of-constants formula to the rescaled KdV equation and introducing a suitable temporal localization function $\eta$, we define the nonlinear mapping $\widetilde{\Gamma}$ as follows:
\begin{align}\label{eq:fix_point_2}
	\widetilde{\Gamma}(w)(t) = \eta(t)e^{-t \partial_x^3}u_{\lambda}(0)
	+ \frac{1}{2}\eta(t)\int_{0}^{t} e^{(s - t)\partial_x^3}\eta(s)\partial_x\left(w(s)^2\right)\,\mathrm{d}s,
\end{align}
where $\eta$ is a smooth bump function satisfying $\eta(t)\equiv 1$ for $t\in[0,1]$. It follows from \cite[Proposition 6]{Colliander} that, for sufficiently large $\lambda$, the mapping $\widetilde{\Gamma}$ admits a unique fixed point $u_\lambda^*(t)$ in a suitable Bourgain space, and this fixed point coincides with the rescaled solution:
\begin{align*}
	u_\lambda^*(t)\equiv u_{\lambda}(t),\quad \text{for}\quad t\in[0,1].
\end{align*}

Next, to rigorously analyze the numerical scheme, we construct a continuous-in-time function $\mathcal{U}(t)$ interpolating the discrete numerical solutions $u^n_\lambda$ over the interval $[0,\lambda^3 T]$, i.e.,
\begin{align*}
	\mathcal{U}(t_n) = u^n_{\lambda},\quad \text{for each}\quad n\in\{0,1,\dots,T/\tau\},\quad\text{with}\quad t_n=\lambda^3\sigma_n=n\lambda^3\tau.
\end{align*}
This interpolated function satisfies an integral equation defined in continuous time (see equation~\eqref{eq:continuation_problem} in the next section for the explicit expression). The corresponding nonlinear mapping, incorporating the discretization perturbation, is defined as
\begin{align}\label{eq:fix_point_1}
	\Gamma(w)(t) = \eta(t)e^{-t \partial_x^3}u_{\lambda}^0
	+ \frac{1}{2}\eta(t)\int_{0}^{t} e^{(s - t)\partial_x^3}\eta(s)\partial_x\left(w(s)^2\right)\,\mathrm{d}s
	+ \mathcal{A}(w)(t),
\end{align}
where $\mathcal{A}(w)$ represents the perturbation term resulting from the discretization (see equation~\eqref{eq:def_Gamma} for its explicit form). Clearly, equation \eqref{eq:fix_point_1} can be viewed as a perturbed version of the continuous fixed-point problem \eqref{eq:fix_point_2}. By employing Picard iteration combined with Bourgain-space estimates, we shall prove that, for suitably chosen $\lambda$ and an appropriate relation between the temporal step size $\tau$ and filter constant $N$, the perturbed nonlinear mapping $\Gamma$ admits a unique fixed point $\mathcal{U}^*(t)$ in an appropriate Bourgain space, satisfying
\begin{align*}
	\mathcal{U}^*(t)\equiv \mathcal{U}(t),\quad \text{for}\quad t\in[0,1],
\end{align*}
with Bourgain-space bounds independent of the discretization parameters $\tau$, $N$, and the scaling factor $\lambda$.

By comparing the fixed-point equations \eqref{eq:fix_point_1} and \eqref{eq:fix_point_2}, we transform the estimation of the numerical error $u(\sigma_n)-u^n$ (for $\sigma_n\leq \lambda^{-3}$) into the analysis of the difference between their corresponding fixed points:
\begin{align*}
	\max_{\sigma_n\leq \lambda^{-3}}\|u(\sigma_n)-u^n\|_{H^{-\frac{1}{2}}}
	&\leq \lambda^{\frac{3}{2}}\max_{\lambda^3\sigma_n\leq 1}\|u_{\lambda}(\lambda^3\sigma_n)-u_{\lambda}^n\|_{H^{-\frac{1}{2}}(0,\lambda)}\\
	&\leq \lambda^{\frac{3}{2}}\|u_\lambda^*-\mathcal{U}^*\|_{L^{\infty}(0,1;H^{-\frac{1}{2}}(0,\lambda))}.
\end{align*}
By further applying the embedding theorem for Bourgain spaces (see Lemma~\ref{lem:embedding_Ys}), we can control the $H^{-\frac{1}{2}}$ norm of the difference between $u^{*}_{\lambda}(t)$ and $\mathcal{U}^{*}(t)$ using Bourgain space estimates. In particular, this difference can be bounded by carefully estimating the perturbation term $\mathcal{A}(w)$ appearing in \eqref{eq:fix_point_1}, which captures the error between the two fixed-point formulations in \eqref{eq:fix_point_1} and \eqref{eq:fix_point_2}.

Finally, by iteratively applying the above local-in-time estimates from the initial interval (starting at $n=0$) and sequentially advancing through subsequent intervals, we extend these local error estimates globally in time. Thus, we ultimately establish the rigorous global error bound for all time steps $0\leq n\leq T/\tau$ as stated in Theorem~\ref{thm:main}.

\

\noindent \textbf{Notations.} We denote by $A\lesssim B$ or $B\gtrsim A$ the statement ``$A \leq CB$ for some constant $C > 0$", where $C$ may vary from line to line but it is independent of $\tau$, $N$ and $\lambda$. Similarly, we denote by $A \sim B$ the statement ``$C^{-1}B \leq A \leq CB$ for some constant $C > 0$''. In other words, $A \sim B$ is equivalent to $A \lesssim B \lesssim A$.
The notation $a+$ represents $a + \varepsilon$ for an arbitrarily small $\varepsilon > 0$, and $a-$ represents $a - \varepsilon$ for an arbitrarily small $\varepsilon > 0$. 
We also define the following rescaled quantities:
\begin{align}\label{eq:notation_tau_lambda}
	\tau_\lambda = \lambda^3 \tau, \quad N_\lambda = \lambda^{-1} N, \quad \text{and} \quad t_n = n\tau_\lambda = \lambda^3 \sigma_n,
\end{align}
where $\tau$, $N$, and $\sigma_n$ are the original time step size, degrees of the freedom, and time node, respectively.

\section{Rescaling argument and continuation of the numerical solution}\label{sec:rescaling}

\subsection{The $\lambda$-periodic KdV equation and numerical scheme}

By utilizing the rescaling argument $u_{\lambda}(t,x)=\lambda^{-2}u(\frac{t}{\lambda^3},\frac{x}{\lambda})$ as in \cite{Colliander}, 
we derive that $u_{\lambda}$ is the solution of the following $\lambda$-periodic problem:
\begin{equation}\label{eq:lambda_KdV_equation}
	\left\{\begin{array}{l}
		{\displaystyle \partial_t u_{\lambda}(t,x)+\partial^{3}_{x}u_{\lambda}(t,x)=\frac{1}{2}\partial_x (u_{\lambda}(t,x)^2),\quad x\in[0,\lambda] \;\; \text{and}\;\; t\in(0,\lambda^3 T],}\\[2mm]
		{\displaystyle u_{\lambda}(0)=\phi_{\lambda},}
	\end{array}\right.
\end{equation}
where $\phi_{\lambda}(x)=\lambda^{-2}\phi(\frac{x}{\lambda})$, and $\lambda\geq 1$ is a constant to be determined later, which is independent of $\tau$ and $N$, but may depend on $\|u\|_{L^{\infty}(0,T;H^s)}$.

We define the normalized counting measure $(\d k)_{\lambda}$ on the discrete set $\mathbb{Z}/\lambda:=\{\frac{l}{\lambda} : l\in \mathbb{Z}\}$ as follows:
\begin{align}\label{eq:def_dk_lambda}
	\int a(k)(\d k)_{\lambda}=\frac{1}{\lambda}\sum_{k\in \mathbb{Z}/\lambda}a(k).
\end{align}
The Fourier transform on $[0,\lambda]$ and its inverse are defined by
\begin{align}
	\hat{f}(k)=\int_0^\lambda e^{-2\pi ikx}f(x)\d x,\quad \text{and} \quad f(x)=\int e^{2\pi ikx}\hat{f}(k)(\d k)_{\lambda}.
\end{align}
Additionally, the Fourier transform satisfies the usual properties:
\begin{align}
	\|f\|_{L^2([0,\lambda])}&=\|\hat{f}\|_{L^2((\d k)_{\lambda})}&& (\text{Plancherel's identity})\\
	\int_0^{\lambda}f(x)\overline{g(x)}\d x&=\int\hat{f}(k)\bar{\hat{g}}(k)(\d k)_{\lambda}&& (\text{Parseval's identity})\\
	\widehat{fg}(k)=\hat{f}*_{\lambda}\hat{g}(k)&=\int\hat{f}(k-k_1)\hat{g}(k_1)(\d k_1)_{\lambda}&& (\text{Convolution relation}).
\end{align}
The operator $\partial_x^\alpha$, $\alpha\in \mathbb{N}$, and the Sobolev space $H^s(0,\lambda)$ are defined analogously to the 1-periodic case:
\begin{align}
	\partial_x^\alpha f(x)&=\int e^{2\pi ikx}(2\pi ik)^{\alpha}\hat{f}(k)(\d k)_{\lambda},\quad 
	\|f\|_{H^{s}(0,\lambda)}=\|\hat{f}(k)\langle k\rangle^s\|_{L^2((\d k)_{\lambda})},
\end{align}
where we denote $\langle k\rangle=(1+k^2)^{\frac{1}{2}}$. Thus, we have
\begin{align*}
	\|\phi_{\lambda}\|_{H^s(0,\lambda)}= \lambda^{-\frac12}\Big(\sum_{k\in\mathbb{Z}/\lambda}|\hat{\phi}_{\lambda}(k)|^2\langle k\rangle^{2s}\Big)^{\frac{1}{2}}.
\end{align*}
By direct computation, we find
\begin{align*}
	\hat{\phi}_{\lambda}(k)=\lambda^{-1}\hat{\phi}(\lambda k),
\end{align*}
which implies
\begin{align*}
	\|\phi_{\lambda}\|_{H^s(0,\lambda)}= \lambda^{-\frac12}\Big(\sum_{\xi\in\mathbb{Z}}\big|\lambda^{-1}\hat{\phi}(\xi)\big|^2\Big\langle \frac{\xi}{\lambda}\Big\rangle^{2s}\Big)^{\frac{1}{2}}.
\end{align*}
Noting that $(1+\xi^2)^{\frac{s}{2}} \leq \big(1+\big(\xi/\lambda\big)^2\big)^{\frac{s}{2}} \leq \lambda^{-s}(1+\xi^2)^{\frac{s}{2}}$ for $s\leq 0$ and $\lambda\geq 1$, we conclude that
\begin{align}\label{eq:relation_Hs}
	\lambda^{-\frac{3}{2}}\|\phi\|_{H^s(0,1)}\leq \|\phi_{\lambda}\|_{H^s(0,\lambda)}\leq\lambda^{-\frac{3}{2}-s}\|\phi\|_{H^s(0,1)}.
\end{align}
Thus, for any $\phi\in H^s(0,1)$, we can choose an appropriate $\lambda\geq 1$ to transform the original problem into one with a small initial value. This allows for a closed estimate of the nonlinear terms within a small neighborhood at the initial time.

Using the Fourier transform, the solution operator $e^{-t\partial_{x}^3}$ of the linear homogeneous $\lambda$-periodic problem
\begin{equation}\label{eq:linear_lambda_KdV_equation}
	\left\{
	\begin{array}{l}
		{\displaystyle \partial_t w(t,x)+\partial^{3}_{x}w(t,x)=0, \quad x\in[0,\lambda],}\\[2mm]
		{\displaystyle w(0)=\phi_{\lambda},}
	\end{array}
	\right.
\end{equation}
is expressed as
\begin{align}
	w(t,x)=e^{-t\partial_{x}^3}\phi_{\lambda}=\int e^{2\pi ikx}e^{-(2\pi ik)^3 t}\hat{\phi}_{\lambda}(k)(\d k)_{\lambda}.
\end{align}

For the numerical solution $u^n$ given by \eqref{eq:filtered_LRI}, the corresponding rescaled time discrete scheme is
\begin{align}\label{eq:lambda_filtered_LRI}
	u_{\lambda}^{n+1}=&e^{-\tau_{\lambda} \partial_x^3}u_{\lambda}^n+\frac{1}{2}\int_{t_{n}}^{t_{n+1}}e^{(t-t_{n+1})\partial_x^3}\Pi_{N_{\lambda}}\partial_{x}\left(e^{(t_{n}-t)\partial_x^3}\Pi_{N_{\lambda}}u_{\lambda}^n\right)^2\d t,
\end{align}
with $u^{n}_{\lambda}(x)=\lambda^{-2}u^{n}(\frac{x}{\lambda})$, and
\begin{align}\label{eq:def_Pi_lambda}
	(\widehat{\Pi_{N_{\lambda}} f})_{k}=\left\{
	\begin{array}{l}
		\hat{f}_{k}\quad \text{if}\;\; |k|\leq N_{\lambda},\\[2mm]
		0\quad \;\, \text{if}\;\; |k|> N_{\lambda},
	\end{array}
	\right.\quad\text{for}\quad k\in \mathbb{Z}/\lambda,
\end{align}
where $t_n$, $\tau_{\lambda}$, $N_{\lambda}$ are rescaled quantities of $\sigma_{n}$, $\tau$ and $N$ respectively, as denoted in \eqref{eq:notation_tau_lambda}.

\subsection{Continuation of the numerical solution}

In order to make use of the Bourgain-space estimates at the continuous level, we introduce the time-continuous function $\U(t)$, $t \in [0, \lambda^3 T]$, defined as follows:  
\begin{align}\label{eq:continuation_solution}
	\U(t) &= e^{(t_n-t) \partial_x^3}u_{\lambda}^n + \frac{1}{2} \int_{t_{n}}^{t} e^{(s-t)\partial_x^3} \Pi_{N_{\lambda}} \partial_{x} \left( e^{(t_{n}-s)\partial_x^3} \Pi_{N_{\lambda}} u_{\lambda}^n \right)^2 \, \mathrm{d}s
	\quad\mbox{for\,\, $t \in [t_n, t_{n+1}]$. }
\end{align}  
For the simplicity of notation, we define  
\begin{align}\label{eq:def_F0}
	F^n(t, w) = \frac{1}{2} \int_{t_{n}}^{t} e^{(s-t)\partial_x^3} \Pi_{N_{\lambda}} \partial_{x} \left( e^{(t_{n}-s)\partial_x^3} \Pi_{N_{\lambda}} w \right)^2 \, \mathrm{d}s,
\end{align}  
for each $n \in \{0, 1, \cdots, T/\tau\}$ and $t \in [t_n, t_{n+1}]$. Using this notation, we define the piecewise time-dependent function $\mathcal{F}(t, \U)$ as  
\begin{align}\label{eq:def_F}
	\mathcal{F}(t, \U) = F^n(t, \U(t_n)), \quad \text{for} \quad t \in [t_n, t_{n+1}].
\end{align}  
It follows directly from \eqref{eq:lambda_filtered_LRI} and \eqref{eq:continuation_solution} that  
$\U(t_n) = u^{n}_{\lambda}$,  
and  
\begin{align*}
	\U(t) &= e^{(t_n-t) \partial_x^3} \U(t_n) + \F(t, \U).
\end{align*}  
By substituting $e^{(t_n-s) \partial_x^3} u^n_{\lambda}(t_n) = e^{(t_n-s) \partial_x^3} \U(t_n) = \U(s) - \F(s, \U)$ into the right-hand side of \eqref{eq:continuation_solution}, we obtain  
\begin{align}
	\U(t) &= e^{(t_n-t) \partial_x^3} \U(t_n) + \frac{1}{2} \int_{t_{n}}^{t} e^{(s-t)\partial_x^3} \Pi_{N_{\lambda}} \partial_{x} \left( \Pi_{N_{\lambda}} \U(s) \right)^2 \, \mathrm{d}s \notag \\
	& \quad - \int_{t_{n}}^{t} e^{(s-t)\partial_x^3} \Pi_{N_{\lambda}} \partial_{x} \Big( \Pi_{N_{\lambda}} \F(s, \U) \Pi_{N_{\lambda}} \U(s) \Big) \, \mathrm{d}s \notag \\
	& \quad + \frac{1}{2} \int_{t_{n}}^{t} e^{(s-t)\partial_x^3} \Pi_{N_{\lambda}} \partial_{x} \left( \Pi_{N_{\lambda}} \F(s, \U) \right)^2 \, \mathrm{d}s.
\end{align}  
By iterating this expression, we obtain
\begin{align}\label{eq:continuation_problem}
	\U(t) &= e^{-t \partial_x^3} \U(0) + \frac{1}{2} \int_{0}^{t} e^{(s-t)\partial_x^3} \Pi_{N_{\lambda}} \partial_{x} \left( \Pi_{N_{\lambda}} \U(s) \right)^2 \, \mathrm{d}s \notag \\
	& \quad - \int_{0}^{t} e^{(s-t)\partial_x^3} \Pi_{N_{\lambda}} \partial_{x} \Big( \Pi_{N_{\lambda}} \F(s, \U) \Pi_{N_{\lambda}} \U(s) \Big) \, \mathrm{d}s \notag \\
	& \quad + \frac{1}{2} \int_{0}^{t} e^{(s-t)\partial_x^3} \Pi_{N_{\lambda}} \partial_{x} \left( \Pi_{N_{\lambda}} \F(s, \U) \right)^2 \, \mathrm{d}s\notag\\
	&=e^{-t \partial_x^3} \U(0) + \frac{1}{2} \int_{0}^{t} e^{(s-t)\partial_x^3} \partial_{x} \left(  \U(s) \right)^2 \, \mathrm{d}s+\mathcal{R}(\U)(t),
\end{align}  
where
\begin{align*}
	\mathcal{R}(\U)(t)=&- \int_{0}^{t} e^{(s-t)\partial_x^3} \Pi_{N_{\lambda}} \partial_{x} \Big( \Pi_{N_{\lambda}} \F(s, \U) \Pi_{N_{\lambda}} \U(s) \Big) \, \mathrm{d}s\\
	& + \frac{1}{2} \int_{0}^{t} e^{(s-t)\partial_x^3} \Pi_{N_{\lambda}} \partial_{x} \left( \Pi_{N_{\lambda}} \F(s, \U) \right)^2 \, \mathrm{d}s\\
	&+\frac{1}{2} \int_{0}^{t} e^{(s-t)\partial_x^3}  \partial_{x} \left[\Pi_{N_{\lambda}}\left( \Pi_{N_{\lambda}} \U(s) \right)^2-\left(  \U(s) \right)^2\right] \, \mathrm{d}s. 
\end{align*}
Since the $\lambda$-periodic KdV equation \eqref{eq:lambda_KdV_equation} is equivalent to the integral equation  
\begin{align}\label{eq:mild_lambda_KdV}
	u_{\lambda}(t) = e^{-t \partial_x^3} u_\lambda(0) + \frac{1}{2} \int_{0}^{t} e^{(s-t)\partial_x^3} \partial_{x} \left( u_{\lambda}(s) \right)^2 \, \mathrm{d}s,
\end{align}  
the continuation problem \eqref{eq:continuation_problem} can be viewed as a perturbation of \eqref{eq:mild_lambda_KdV}. By comparing the differences between \eqref{eq:continuation_problem} and \eqref{eq:mild_lambda_KdV}, we can estimate the error $u_{\lambda}(t_n) - u^n_{\lambda}$. Using the rescaling relationship, we have  
\begin{align}\label{eq:relation_scaling}
	\| u(\sigma_n) - u^n \|_{H^{-\frac{1}{2}}(0,1)} \leq \lambda^{\frac{3}{2}} \| u_{\lambda}(t_n) - u^n_{\lambda} \|_{H^{-\frac{1}{2}}(0,\lambda)} = \lambda^{\frac{3}{2}} \| u_{\lambda}(t_n) - \U(t_n) \|_{H^{-\frac{1}{2}}(0,\lambda)}.
\end{align}  
Thus, the proof of Theorem~\ref{thm:main} is reduced to estimating $\| u_{\lambda}(t_n) - \U(t_n) \|_{H^{-\frac{1}{2}}(0,\lambda)}$.

\section{Bourgain spaces and preliminary results}\label{sec:bourgain}
As in the literature \cite{Colliander, Tao}, we define the Bourgain space $X_{s,b}$ for $\lambda$-periodic problems using the following norm:  
\begin{align}
	\|w\|_{X_{s,b}(\mathbb{R} \times ([0,\lambda]))} = 
	\big\|\langle k \rangle^s \langle \sigma - 4\pi^2k^3 \rangle^b \tilde{w}(\sigma,k) \big\|_{L^2(\mathrm{d}\sigma (\mathrm{d}k)_{\lambda})},
\end{align}  
where $\tilde{w}$ is the space-time Fourier transform of $w$, given by  
\begin{align*}
	\tilde{w}(\sigma,k) = \int_{\mathbb{R} \times [0,\lambda]} e^{-2\pi i \sigma t - 2\pi i k x} w(t,x) \, \mathrm{d}t \, \mathrm{d}x.
\end{align*}  
We also introduce a slightly smaller space $Y^s$, compared to $X_{s,\frac{1}{2}}$, as described in \cite{Colliander}. This ensures that any solution in $Y^s$ is also contained in $L^{\infty}_{t}H^{s}_x$:  
\begin{align}
	\|w\|_{Y^s} = \|w\|_{X_{s,\frac{1}{2}}} + \|\langle k \rangle^s \tilde{w}(\sigma,k)\|_{L^2((\mathrm{d}k)_{\lambda}) L^1(\mathrm{d}\sigma)}.
\end{align}  
Additionally, we define the functional space $Z^s$ via the norm  
\begin{align}
	\|w\|_{Z^s} = \|w\|_{X_{s,-\frac{1}{2}}} + \left\|\frac{\langle k \rangle^s \tilde{w}(\sigma,k)}{\langle \sigma - 4\pi^2k^3 \rangle}\right\|_{L^2((\mathrm{d}k)_{\lambda}) L^1(\mathrm{d}\sigma)}.
\end{align}  
Let $\eta \in C^{\infty}_0(\mathbb{R})$ be a smooth bump function such that  
\begin{align}\label{eq:bump_function}
	\text{supp}(\eta) \subset [-2,2], \quad \text{with} \quad \eta \equiv 1 \quad \text{on} \quad [-1,1].
\end{align}

Next, we present some preliminary results (i.e., Lemmas~\ref{lem:free_solution}–\ref{lem:embedding_theorem}), established in \cite{Colliander, Tao}, which will be frequently used throughout this article. These focus on estimates in the Bourgain space for $\lambda$-periodic problems, where the constants are independent of $\lambda$. 

\begin{lemma}[{Free solutions lie in $Y^s$ \cite[Lemma~7.1]{Colliander}}]\label{lem:free_solution}
	Let $\phi_{\lambda}\in H^s([0,\lambda])$ for $s\in\R$, and $\eta$ be the bump function given by \eqref{eq:bump_function}. Then
	\begin{align}\label{eq:free_solution}
		\|\eta(t)e^{-t\partial_x^3}\phi_{\lambda}\|_{Y^s}\leq C\|\phi_{\lambda}\|_{H^s},
	\end{align}
	where $C$ is a positive constant independent of $\lambda$.
\end{lemma}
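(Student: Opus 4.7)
The plan is to reduce everything to an explicit computation of the space-time Fourier transform of $\eta(t)e^{-t\partial_x^3}\phi_\lambda$, and then exploit the fact that this transform factors as a product of a function of $k$ alone and a function of the "modulation" variable $\sigma - 4\pi^2 k^3$ alone. This factorization allows both pieces of the $Y^s$ norm to be evaluated by Fubini, producing $\|\phi_\lambda\|_{H^s}$ multiplied by constants that are integrals of $\hat{\eta}$ over the real line, hence $\lambda$-independent.

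\textbf{Step 1: Compute the Fourier transform.} Starting from the formula $e^{-t\partial_x^3}\phi_\lambda(x)=\int e^{2\pi i kx + 2\pi i (4\pi^2 k^3) t}\hat{\phi}_\lambda(k)(\d k)_\lambda$, I would multiply by $\eta(t)$ and take the space-time Fourier transform. The $x$-integration pins the spatial frequency to $k$, and the $t$-integration reduces to the Fourier transform of $\eta$ evaluated at the shifted variable $\sigma - 4\pi^2 k^3$. The conclusion is the clean identity
\begin{equation*}
\widetilde{\bigl(\eta(t)e^{-t\partial_x^3}\phi_\lambda\bigr)}(\sigma,k) \;=\; \hat{\phi}_\lambda(k)\,\hat{\eta}\bigl(\sigma - 4\pi^2 k^3\bigr).
\end{equation*}

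\textbf{Step 2: Bound the $X_{s,1/2}$ component.} I would plug the above identity into the definition of $\|\cdot\|_{X_{s,1/2}}$, giving
\begin{equation*}
\bigl\|\eta e^{-t\partial_x^3}\phi_\lambda\bigr\|_{X_{s,1/2}}^2 \;=\; \int |\hat{\phi}_\lambda(k)|^2\langle k\rangle^{2s}\!\left(\int \langle \sigma - 4\pi^2 k^3\rangle |\hat{\eta}(\sigma-4\pi^2k^3)|^2\,\d\sigma\right)(\d k)_\lambda.
\end{equation*}
The change of variable $\sigma'=\sigma-4\pi^2 k^3$ (which is a pure translation in $\sigma$ for each fixed $k$) decouples the integrals, and the inner integral becomes $\|\langle\sigma\rangle^{1/2}\hat{\eta}\|_{L^2}^2$, a finite constant depending only on the fixed Schwartz function $\eta$. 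The remaining $k$-integral is exactly $\|\phi_\lambda\|_{H^s}^2$. Crucially, the translation in $\sigma$ does not interact with the counting measure $(\d k)_\lambda$, so no $\lambda$-dependence leaks in.

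\textbf{Step 3: Bound the $L^2_k L^1_\sigma$ component, and conclude.} For the second summand of the $Y^s$ norm, I would first take the $L^1_\sigma$ norm of $|\hat{\phi}_\lambda(k)\hat{\eta}(\sigma-4\pi^2k^3)|$; translation invariance of $L^1(\R)$ gives $|\hat{\phi}_\lambda(k)|\cdot\|\hat{\eta}\|_{L^1}$. Taking the weighted $L^2((\d k)_\lambda)$ norm then yields $\|\hat{\eta}\|_{L^1}\,\|\phi_\lambda\|_{H^s}$. Combining the two bounds gives the lemma with $C = \|\langle\sigma\rangle^{1/2}\hat{\eta}\|_{L^2} + \|\hat{\eta}\|_{L^1}$, which is manifestly $\lambda$-independent. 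There is no substantive obstacle here: the only subtlety worth flagging is to verify that the translation $\sigma\mapsto \sigma-4\pi^2k^3$ commutes with the time-frequency integrals for every $k\in\Z/\lambda$, which it does since the shift is a $k$-dependent constant and both integrals are over all of $\R$. This is what guarantees that the cubic dispersion $\sigma=4\pi^2 k^3$ plays no role in the size of the constant $C$.
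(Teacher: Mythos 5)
Your proof is correct, and it is exactly the standard argument behind the cited result: the paper itself gives no proof of this lemma (it simply invokes \cite[Lemma~7.1]{Colliander}), and the computation there is the same factorization $\widetilde{(\eta\, e^{-t\partial_x^3}\phi_\lambda)}(\sigma,k)=\hat{\phi}_\lambda(k)\,\hat{\eta}(\sigma-4\pi^2k^3)$ followed by the translation $\sigma\mapsto\sigma-4\pi^2k^3$, which decouples the $\sigma$- and $k$-integrations and yields a constant depending only on the Schwartz function $\hat{\eta}$, hence independent of $\lambda$. Nothing to add.
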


\begin{lemma}[Embedding result for $Y^s$ \cite{Colliander}]\label{lem:embedding_Ys}
	If $w\in Y^s$ for $s\in \R$, then
	\begin{align}\label{eq:embedding_Ys}
		\|w\|_{L_{t}^{\infty}H_{x}^s}\leq C\|w\|_{Y^s},
	\end{align}
	where $C$ is a positive constant independent of $\lambda$.
\end{lemma}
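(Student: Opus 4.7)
The plan is to exploit the fact that the second summand in the $Y^s$ norm is tailor-made so that the embedding into $L^\infty_t H^s_x$ becomes essentially tautological at the critical index $b=\tfrac12$. More precisely, for fixed $t\in\mathbb{R}$, I would invert the temporal Fourier transform of $w$ to write
\begin{equation*}
\hat w(t,k) = \int_{\mathbb R} e^{2\pi i\sigma t}\,\tilde w(\sigma,k)\,\mathrm d\sigma,
\end{equation*}
where $\hat w(t,\cdot)$ denotes the spatial Fourier coefficient of $w(t,\cdot)$ on $[0,\lambda]$. The pointwise bound $|\hat w(t,k)|\le \int_{\mathbb R}|\tilde w(\sigma,k)|\,\mathrm d\sigma$ is uniform in $t$, so multiplying by $\langle k\rangle^s$ and taking $L^2((\mathrm dk)_\lambda)$ yields
\begin{equation*}
\|w(t)\|_{H^s_x}^2 = \int \langle k\rangle^{2s}|\hat w(t,k)|^2(\mathrm dk)_\lambda \le \big\|\langle k\rangle^s\,\tilde w(\sigma,k)\big\|_{L^2((\mathrm dk)_\lambda)L^1(\mathrm d\sigma)}^{\,2}.
\end{equation*}
Taking the supremum in $t$ on the left and recognizing the right-hand side as (a piece of) the $Y^s$ norm gives the desired bound with a universal constant $C$ that is independent of $\lambda$, since the counting measure $(\mathrm dk)_\lambda$ appears in the same way on both sides and no integration in $t$ is performed.

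One step that deserves a small verification is the exchange of Fourier inversion and integration used to justify the formula for $\hat w(t,k)$: for $w\in Y^s$ the function $\sigma\mapsto \tilde w(\sigma,k)$ lies in $L^1(\mathrm d\sigma)$ for $(\mathrm dk)_\lambda$-almost every $k$, so the integral above converges absolutely and defines a continuous function of $t$. Consequently $t\mapsto \hat w(t,k)$ is well defined pointwise (indeed continuous), which makes $\|w(t)\|_{H^s_x}$ well defined for every $t\in\mathbb R$, not merely almost everywhere; this also yields the (unstated but customary) continuity $w\in C(\mathbb R;H^s_x)$.

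There is no real analytical obstacle in this lemma, and this is precisely the design principle behind $Y^s$: the endpoint space $X_{s,1/2}$ alone fails to embed into $L^\infty_t H^s_x$ (one would need $X_{s,1/2+\varepsilon}$ by the standard Sobolev embedding in the time variable), and the additional $L^2_k L^1_\sigma$ term is inserted into the definition of $Y^s$ in \cite{Colliander} exactly so that the trivial pointwise Fourier bound above succeeds. Thus the lemma reduces to unpacking definitions, and the work consists in carefully tracking that all Fourier conventions on the $\lambda$-periodic torus used in Section~\ref{sec:rescaling} are consistent with Plancherel's identity in the form needed, ensuring that the constant $C$ is truly independent of $\lambda$.
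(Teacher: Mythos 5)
Your proof is correct and is essentially identical to the paper's own argument, which likewise reduces the lemma to the pointwise bound $\|w\|_{L_t^\infty H_x^s}\leq \|\langle k\rangle^s\tilde w(\sigma,k)\|_{L^2((\mathrm{d}k)_\lambda)L^1(\mathrm{d}\sigma)}$ obtained from Fourier inversion in time. The additional remarks on absolute convergence and continuity in $t$ are a welcome (if routine) elaboration that the paper omits.
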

For the convenience of readers, the proof of this lemma is presented in the supplementary material.

\begin{lemma}[Boundedness of the time localization \cite{Colliander,Tao}]\label{lem:time_localization}
	Let $\eta$ be the bump function given by \eqref{eq:bump_function}. Then
	\begin{align}\label{eq:time_localization}
		\|\eta(t)w\|_{X_{s,b}}\leq C\|w\|_{X_{s,b}}, \quad \|\eta(t)w\|_{Y^s}\leq C\|w\|_{Y^s},\quad \text{and}\quad\|\eta(t)w\|_{Z^s}\leq C\|w\|_{Z^s},
	\end{align}
	where $C$ is a positive constant independent of $\lambda$.
\end{lemma}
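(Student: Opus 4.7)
\textbf{Proof proposal for Lemma~\ref{lem:time_localization}.} The plan is to reduce each of the three norm bounds to a scalar convolution estimate on the Fourier side, exploiting that multiplication by $\eta(t)$ in physical time corresponds to convolution by $\hat{\eta}(\sigma)$ in the temporal Fourier variable. Since $\eta$ depends only on $t$ and not on $x$, the spatial Fourier variable $k$ will be an inert parameter throughout, which is the reason all constants will come out independent of the rescaling parameter $\lambda$.

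First I would treat the $X_{s,b}$ estimate. Writing $\widetilde{\eta w}(\sigma,k)=(\hat{\eta}*_\sigma \tilde w(\cdot,k))(\sigma)$ and using the elementary inequality
\begin{equation*}
\langle \sigma-4\pi^2 k^3\rangle^b \lesssim \langle \sigma-\sigma_1\rangle^{|b|}\,\langle \sigma_1-4\pi^2 k^3\rangle^b,
\end{equation*}
the quantity $\langle k\rangle^s\langle\sigma-4\pi^2k^3\rangle^b |\widetilde{\eta w}(\sigma,k)|$ is dominated by the $\sigma$-convolution of the nonnegative kernel $K(\sigma):=|\hat\eta(\sigma)|\langle\sigma\rangle^{|b|}$ against $\langle k\rangle^s\langle\sigma_1-4\pi^2k^3\rangle^b|\tilde w(\sigma_1,k)|$. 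Since $\eta\in C_0^\infty(\mathbb R)$, $\hat\eta$ is Schwartz, and hence $K\in L^1(\mathbb R)$ with a norm independent of $\lambda$. Young's convolution inequality in the $\sigma$-variable then gives the $X_{s,b}$ bound, with the counting measure $(\d k)_\lambda$ merely providing an outer $L^2$ integration that passes through by Minkowski/Fubini.

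For the $Y^s$ and $Z^s$ norms I would handle the extra pieces by the same convolution device. The additional term defining $\|\cdot\|_{Y^s}$ is the $L^2_{(\d k)_\lambda}L^1_{\d\sigma}$ norm of $\langle k\rangle^s\widetilde{\eta w}$; applying Young directly in $\sigma$ with $\hat\eta\in L^1$ bounds this by $\|\hat\eta\|_{L^1}\,\|\langle k\rangle^s\tilde w\|_{L^2_{(\d k)_\lambda}L^1_{\d\sigma}}$. For the extra term in $\|\cdot\|_{Z^s}$, which carries the weight $\langle\sigma-4\pi^2k^3\rangle^{-1}$ inside the $L^1_\sigma$ norm, I would first move this weight outside the convolution via the same triangle-type inequality
\begin{equation*}
\frac{1}{\langle\sigma-4\pi^2k^3\rangle}\lesssim \frac{\langle\sigma-\sigma_1\rangle}{\langle\sigma_1-4\pi^2k^3\rangle},
\end{equation*}
and then conclude with Young against the $L^1$ kernel $|\hat\eta(\sigma)|\langle\sigma\rangle$. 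Combining these gives the three claimed bounds simultaneously.

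The main (mild) obstacle is the sign of $b$: for $b<0$ the inequality $\langle\sigma-4\pi^2k^3\rangle^b\lesssim\langle\sigma-\sigma_1\rangle^{|b|}\langle\sigma_1-4\pi^2k^3\rangle^b$ still holds but requires using the reverse triangle form $\langle\sigma_1-4\pi^2k^3\rangle\lesssim\langle\sigma-\sigma_1\rangle\langle\sigma-4\pi^2k^3\rangle$ before taking the $b$-th power. Once that algebraic point is handled, everything else is a consequence of Young's inequality and the Schwartz decay of $\hat\eta$, both of which are oblivious to $\lambda$, so the final constants depend only on $\eta$, $s$, and $b$, as required.
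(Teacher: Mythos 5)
Your proposal is correct and follows essentially the same route as the paper, which expands $\eta(t)=\int\hat\eta(\sigma_0)e^{it\sigma_0}\,\d\sigma_0$ and invokes the reasoning of Lemma~2.11 of \cite{Tao}: your convolution-plus-Young formulation with the kernel $|\hat\eta(\sigma)|\langle\sigma\rangle^{|b|}$ is the Fourier-side restatement of that modulation-superposition argument and yields the identical constant $\int|\hat\eta(\sigma_0)|\langle\sigma_0\rangle^{|b|}\,\d\sigma_0$. Your explicit treatment of the extra $L^2_{(\d k)_\lambda}L^1_{\d\sigma}$ terms in the $Y^s$ and $Z^s$ norms, and of the sign of $b$, simply fills in details the paper dismisses with ``the same manner,'' so nothing further is needed.
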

For the convenience of readers, the proof of this lemma is presented in the supplementary material.

\begin{lemma}[{Energy estimate \cite[Lemma~7.2]{Colliander}}] \label{lem:energy_estimate}
	Let $\eta$ be the bump function given by \eqref{eq:bump_function}, and $F\in Z^s$ for $s\in\R$. Then
	\begin{align}\label{eq:energy_estimate}
		\left\|\eta(t)\int_0^t e^{(s-t)\partial_x^3}F(s)\d s\right\|_{Y^s}\leq C \|F\|_{Z^s},
	\end{align}
	where $C$ is a positive constant independent of $\lambda$.
\end{lemma}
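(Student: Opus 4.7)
The plan is to pass to the space-time Fourier side and decompose the Duhamel integral into pieces controlled either by Lemma~\ref{lem:free_solution} (free solutions in $Y^s$) or by direct estimates of the $Y^s$ norm together with Lemma~\ref{lem:time_localization}. Writing $F(s,x)=\int \tilde F(\sigma,k)\,e^{2\pi i(s\sigma+kx)}\,\d\sigma\,(\d k)_{\lambda}$ and using that $e^{-t\partial_x^3}$ has Fourier symbol $e^{2\pi i t\,\Omega(k)}$ with $\Omega(k):=4\pi^{2}k^{3}$, the inner time integral evaluates to
\begin{align*}
\int_{0}^{t}e^{2\pi i s(\sigma-\Omega(k))}\,\d s=\frac{e^{2\pi it(\sigma-\Omega(k))}-1}{2\pi i(\sigma-\Omega(k))}=:K(t,\sigma,k),
\end{align*}
so that $w(t,x):=\eta(t)\int_{0}^{t}e^{(s-t)\partial_{x}^{3}}F(s,x)\,\d s$ takes the form $\eta(t)\int e^{2\pi i(kx+t\Omega(k))}K(t,\sigma,k)\,\tilde F(\sigma,k)\,\d\sigma\,(\d k)_{\lambda}$. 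The singularity of $K$ at $\sigma=\Omega(k)$ is what forces a careful split of the integrand.

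I would then introduce a smooth cutoff $\psi_{\leq}$ supported in $[-2,2]$ with $\psi_{\leq}\equiv1$ on $[-1,1]$, set $\psi_{>}:=1-\psi_{\leq}$, and write $w=w_{\leq}+w_{>}$ according to $\psi_{\leq}(\sigma-\Omega(k))$ versus $\psi_{>}(\sigma-\Omega(k))$. On the low-modulation region a Taylor expansion yields
\begin{align*}
K(t,\sigma,k)\,\psi_{\leq}(\sigma-\Omega(k))=\sum_{m\geq1}\frac{(2\pi i)^{m-1}t^{m}}{m!}\,(\sigma-\Omega(k))^{m-1}\psi_{\leq}(\sigma-\Omega(k)),
\end{align*}
which rewrites $w_{\leq}$ as a convergent sum $\sum_{m}c_{m}\eta(t)\,t^{m}\,e^{-t\partial_{x}^{3}}\phi_{m}$ with coefficients $|c_{m}|\leq(2\pi)^{m-1}/m!$ and data $\widehat{\phi_{m}}(k)=\int(\sigma-\Omega(k))^{m-1}\psi_{\leq}(\sigma-\Omega(k))\,\tilde F(\sigma,k)\,\d\sigma$. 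Since $|\sigma-\Omega(k)|\leq2$ on the support of $\psi_{\leq}$, the pointwise bound $|\widehat{\phi_{m}}(k)|\lesssim 2^{m-1}\int|\tilde F(\sigma,k)|/\langle\sigma-\Omega(k)\rangle\,\d\sigma$ delivers $\|\phi_{m}\|_{H^{s}}\lesssim 2^{m-1}\|F\|_{Z^{s}}$ through the $L^{2}_{k}L^{1}_{\sigma}$ component of the $Z^{s}$ norm. Applying Lemma~\ref{lem:free_solution} with the Schwartz multiplier $\eta(t)\,t^{m}$ in place of $\eta(t)$ (its proof uses only that the multiplier is Schwartz) and summing the resulting series yields $\|w_{\leq}\|_{Y^{s}}\lesssim\|F\|_{Z^{s}}$.

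For the high-modulation piece I would separate the two exponentials appearing in the numerator of $K$:
\begin{align*}
w_{>}(t,x)=\eta(t)\!\int e^{2\pi i(kx+t\sigma)}\,\frac{\psi_{>}(\sigma-\Omega(k))\,\tilde F(\sigma,k)}{2\pi i(\sigma-\Omega(k))}\,\d\sigma\,(\d k)_{\lambda}\;-\;\eta(t)\,e^{-t\partial_{x}^{3}}\phi_{\ast},
\end{align*}
where $\widehat{\phi_{\ast}}(k):=\int\psi_{>}(\sigma-\Omega(k))\,\tilde F(\sigma,k)/(2\pi i(\sigma-\Omega(k)))\,\d\sigma$. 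The second term is again a truncated free solution whose data obeys $\|\phi_{\ast}\|_{H^{s}}\lesssim\|F\|_{Z^{s}}$ by the same $L^{2}_{k}L^{1}_{\sigma}$ bound, so Lemma~\ref{lem:free_solution} closes it. The first term, before multiplication by $\eta$, has space-time Fourier transform equal to $\psi_{>}(\sigma'-\Omega(k))\,\tilde F(\sigma',k)/(2\pi i(\sigma'-\Omega(k)))$; using $\langle\sigma'-\Omega(k)\rangle^{1/2}\psi_{>}/|\sigma'-\Omega(k)|\lesssim\langle\sigma'-\Omega(k)\rangle^{-1/2}$, the $X_{s,1/2}$ component of its $Y^{s}$ norm is dominated by $\|F\|_{X_{s,-1/2}}$ and the $L^{2}_{k}L^{1}_{\sigma'}$ component by $\|F\|_{Z^{s}}$, after which Lemma~\ref{lem:time_localization} absorbs the outer $\eta(t)$.

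The main technical point I anticipate is the low-modulation regime, where the $X_{s,-1/2}$ part of $\|F\|_{Z^{s}}$ is by itself too weak to tame the singularity of $K$ at $\sigma=\Omega(k)$. The $L^{2}_{k}L^{1}_{\sigma}$ augmentation in the $Z^{s}$ norm is designed precisely so that $|\tilde F|/\langle\sigma-\Omega(k)\rangle$ is integrable in $\sigma$, and this is exactly the ingredient that powers the Taylor-series argument above; the factorial decay $2^{m-1}/m!$ then makes the resulting series summable. Once this is in hand the rest is bookkeeping, and every constant stays independent of $\lambda$ because Lemmas~\ref{lem:free_solution} and~\ref{lem:time_localization} are stated with $\lambda$-free constants.
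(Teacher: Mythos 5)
Your proposal is correct and reproduces essentially the standard argument behind the cited result: the paper itself gives no proof of this lemma (it is quoted directly from \cite[Lemma~7.2]{Colliander}), and your Fourier-side decomposition of the Duhamel kernel into a low-modulation Taylor series of truncated free solutions plus a high-modulation piece split between $X_{s,\frac12}$/$L^2_k L^1_\sigma$ control and a further free solution is exactly how that lemma is proved. The only point worth a word in a written-out version is that the constant from Lemma~\ref{lem:free_solution} applied with the multiplier $\eta(t)t^m$ grows at most exponentially in $m$ (since $\eta$ is compactly supported), which is what the factor $1/m!$ must beat for the series to sum.
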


\begin{lemma}[{Bilinear estimate \cite[Proposition 5]{Colliander}}]\label{lem:bilinear_estimate}
	If $u$ and $v$ are $\lambda$-periodic functions of $x$,
	and have zero $x$-mean for all $t$, then
	\begin{align}
		\|\eta(t)\partial_x (uv)\|_{Z^{-\frac{1}{2}}}\leq C \lambda^{0+}\|u\|_{X_{-\frac{1}{2},\frac{1}{2}}}\|v\|_{X_{-\frac{1}{2},\frac{1}{2}}},
	\end{align}
	where $C$ is a positive constant independent of $\lambda$.
\end{lemma}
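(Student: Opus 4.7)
The plan is to prove the bilinear estimate by first using duality to convert the $Z^{-1/2}$ norm on $\eta(t)\partial_x(uv)$ into a trilinear sum on the space--time Fourier side, then performing a dyadic decomposition in both frequency and modulation, and finally exploiting the cubic resonance identity
\begin{equation*}
(\sigma - 4\pi^2 k^3) - (\sigma_1 - 4\pi^2 k_1^3) - (\sigma_2 - 4\pi^2 k_2^3) = -12\pi^2 k\, k_1\, k_2,
\end{equation*}
valid whenever $k = k_1 + k_2$ and $\sigma = \sigma_1 + \sigma_2$. This identity forces the maximum $L_{\max}$ of the three modulation variables $\langle \sigma_j - 4\pi^2 k_j^3\rangle$ to satisfy $L_{\max}\gtrsim |k\, k_1\, k_2|$, which is precisely the gain needed to absorb the derivative $\partial_x$ in front of $uv$. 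The zero $x$-mean hypothesis on $u$ and $v$ removes the $k_1 = 0$ and $k_2 = 0$ contributions, so this lower bound indeed yields a genuine gain at every relevant frequency.

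The execution would proceed in the following order. First I would reduce to the Fourier side by writing $\tilde u = \langle k\rangle^{1/2}\langle \sigma - 4\pi^2 k^3\rangle^{-1/2} f_1$ and similarly for $v$, with $\|f_j\|_{L^2(\d\sigma(\d k)_\lambda)} = \|u_j\|_{X_{-1/2, 1/2}}$, then dualize both parts of the $Z^{-1/2}$ norm -- the $X_{-1/2,-1/2}$ piece and the $L^2((\d k)_\lambda)L^1(\d\sigma)$ piece -- against a common test function of unit $L^2$ norm, absorbing the time-localization $\eta(t)$ via Lemma~\ref{lem:time_localization}. The problem then reduces to a trilinear weighted integral of the form
\begin{equation*}
\int\!\!\int \frac{|k|\, f_1(\sigma_1, k_1)\, f_2(\sigma_2, k_2)\, f_3(\sigma, k)}{\langle k\rangle^{1/2}\langle k_1\rangle^{-1/2}\langle k_2\rangle^{-1/2}\langle L\rangle^{1/2}\langle L_1\rangle^{1/2}\langle L_2\rangle^{1/2}}\, \d\sigma_1\, \d\sigma\, (\d k_1)_\lambda (\d k)_\lambda \lesssim \lambda^{0+}\prod_{j=1}^3\|f_j\|_{L^2},
\end{equation*}
with $k_2 = k - k_1$, $\sigma_2 = \sigma - \sigma_1$, and the sum restricted to $k_1, k_2\ne 0$. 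Next I would dyadically localize $|k_j|\sim N_j$ and $\langle L_j\rangle \sim M_j$, and split into cases by which $M_j$ is maximal; in each case, using $M_{\max}\gtrsim N_1 N_2 |k|$ together with Cauchy--Schwarz in the convolution variable and a lattice-point count for quadratic expressions on $\mathbb{Z}/\lambda$, one obtains a bound with geometric decay in the dyadic parameters that can be summed.

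I expect the hard part to be the near-resonant regime where $|k|$ is very small compared with $|k_1|\sim |k_2|$ (so that the cubic gain $|kk_1k_2|$ reduces to only $|k|N_1 N_2$), together with the parallel difficulty when both $L_1, L_2$ are small and $L$ must absorb the full cubic weight. At the endpoint $b = 1/2$ there is no slack from the modulation weights, so the dyadic summation must be controlled entirely by a sharp lattice-point estimate of the form
\begin{equation*}
\#\bigl\{k_1\in \mathbb{Z}/\lambda : |k\, k_1(k - k_1) - c|\le M,\ k_1\ne 0,\ k_1\ne k\bigr\} \lesssim \lambda^{0+}\bigl(1 + (M/|k|)^{1/2}\bigr),
\end{equation*}
whose $\lambda^{0+}$ factor -- arising from configurations where the quadratic $k_1\mapsto k_1(k-k_1)$ is almost constant along long arithmetic progressions on $\mathbb{Z}/\lambda$, combined with divisor bounds -- is the sole source of the $\lambda^{0+}$ loss in the final estimate. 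Verifying this count carefully, and tracking its influence through each dyadic case to conclude with a $\lambda$-independent constant $C$ times $\lambda^{0+}$, is the single genuinely technical step; everything else is bookkeeping.
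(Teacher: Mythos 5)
The paper does not prove this lemma at all: it is quoted verbatim from \cite[Proposition~5]{Colliander}, so the only meaningful comparison is with the proof in that reference. Your outline follows the same standard route as the cited source (duality, the resonance identity $k^3-k_1^3-k_2^3=3kk_1k_2$, dyadic decomposition with a case split on the largest modulation, and a lattice-count producing the $\lambda^{0+}$ loss), and the zero-mean reduction and the identification of the endpoint $b=\tfrac12$ as the source of difficulty are both correct. However, as written the proposal has two genuine gaps beyond merely deferring the case analysis.

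First, your counting lemma is not dimensionally consistent for the lattice $\mathbb{Z}/\lambda$. The set $\{k_1: |k\,k_1(k-k_1)-c|\le M\}$ is (after completing the square) a union of at most two intervals of total length $O\bigl((M/|k|)^{1/2}\bigr)$ near the vertex, and the \emph{raw} number of points of $\mathbb{Z}/\lambda$ in an interval of length $L$ is $O(1+\lambda L)$, not $O(\lambda^{0+}(1+L))$. The factor $\lambda$ is exactly what the normalized measure $(\d k)_\lambda=\lambda^{-1}\sum$ is designed to cancel; the genuine $\lambda^{0+}$ loss in \cite{Colliander} does not come from ``long arithmetic progressions'' but from a divisor-counting argument in the resonant regime where one must count representations $m=k\,k_1k_2$ with $k=k_1+k_2$, i.e.\ divisors of an integer of size up to $\lambda^{3}N^{3}$. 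If you run your stated bound through the dyadic summation you will either lose a full power of $\lambda$ or be unable to close the worst case.

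Second, dualizing ``both parts of the $Z^{-1/2}$ norm against a common test function of unit $L^2$ norm'' is not correct for the $L^2((\d k)_\lambda)L^1(\d\sigma)$ component: its dual pairing is against $L^2_kL^\infty_\sigma$, as the paper's own proof of Corollary~\ref{corollary:Z0} illustrates. At the endpoint $b=\tfrac12$ this component cannot be recovered from the $X_{-\frac12,-\frac12}$ bound by Cauchy--Schwarz in $\sigma$ (the weight $\langle\sigma-4\pi^2k^3\rangle^{-1}$ is not square-integrable), and \cite{Colliander} devotes a separate argument to it, splitting according to whether the output modulation dominates. Your plan as stated would silently drop this part of the norm. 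Both issues are repairable, but they are precisely the technical content of the cited proposition rather than bookkeeping.
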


\begin{remark}\label{rem:bilinear_estimate}
	It follows from the inequality:
	\begin{align*}
		\langle k_1+k_2\rangle^a\lesssim \langle k_1\rangle^a \langle k_2\rangle^a,
	\end{align*}
	for any $k_1,k_2\in\R$ and $a\geq 0$, that Lemma~\ref{lem:bilinear_estimate} implies, for all $s \geq -\frac{1}{2}$:
	\begin{align}
		\|\eta(t)\partial_x (uv)\|_{Z^{s}}\leq C\lambda^{0+}\|u\|_{X_{s,\frac{1}{2}}}\|v\|_{X_{s,\frac{1}{2}}},
	\end{align}
	where $C$ is a positive constant independent of $\lambda$.
\end{remark}

\begin{lemma}[{Embedding theorem for $X_{0,\frac{1}{3}}$ \cite[Lemma~7.3]{Colliander}}]\label{lem:embedding_theorem}
	Let $w\in X_{0,\frac{1}{3}}$ be a $\lambda$-periodic function and let $\eta$ be the bump function given by \eqref{eq:bump_function}. Then
	\begin{align}\label{eq:embedding_theorem}
		\|\eta(t)w\|_{L^4_t L^4_x}\leq C\|w\|_{X_{0,\frac{1}{3}}},
	\end{align}
	where $C$ is a positive constant independent of $\lambda$.
\end{lemma}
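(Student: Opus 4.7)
The plan is to prove this classical $L^4$ Strichartz-type estimate for the Airy propagator on the $\lambda$-torus at the endpoint Bourgain index $b=\frac{1}{3}$. The approach is to reduce it, via Plancherel and a bilinear Cauchy--Schwarz, to a counting estimate for a weight integral in the space-time Fourier variables, and then to establish the $\lambda$-independence of that estimate by exploiting the cubic dispersion relation.

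First, I would square and apply Plancherel on $\R\times[0,\lambda]$ with measure $\d\sigma\,(\d k)_\lambda$ to write
\[
\|\eta w\|_{L^4_{t,x}}^4=\|(\eta w)^2\|_{L^2_{t,x}}^2=\|\widetilde{\eta w}*\widetilde{\eta w}\|_{L^2(\d\sigma(\d k)_\lambda)}^2,
\]
where $*$ denotes convolution in $(\sigma,k)$ with measure $\d\sigma_1\,(\d k_1)_\lambda$. Setting $F(\sigma,k)=\langle\sigma-4\pi^2k^3\rangle^{1/3}|\widetilde{\eta w}(\sigma,k)|$, Lemma~\ref{lem:time_localization} gives $\|F\|_{L^2}\lesssim\|w\|_{X_{0,1/3}}$. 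Cauchy--Schwarz inside the convolution separates $F$ from the weights,
\[
|\widetilde{(\eta w)^2}(\sigma,k)|^2\leq I(\sigma,k)\cdot\bigl(|F|^2*|F|^2\bigr)(\sigma,k),
\]
with weight integral
\[
I(\sigma,k)=\iint\frac{\d\sigma_1\,(\d k_1)_\lambda}{\langle\sigma_1-4\pi^2k_1^3\rangle^{2/3}\,\langle(\sigma-\sigma_1)-4\pi^2(k-k_1)^3\rangle^{2/3}}.
\]
Taking $L^2$ in $(\sigma,k)$ and using $\||F|^2*|F|^2\|_{L^1}=\|F\|_{L^2}^4$ (Fubini) yields
\[
\|\eta w\|_{L^4_{t,x}}^4\leq\Bigl(\sup_{\sigma,k}I(\sigma,k)\Bigr)\|w\|_{X_{0,1/3}}^4.
\]

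Second, I would bound $I(\sigma,k)$ uniformly in $\sigma$, $k$ and $\lambda\geq 1$. Integrating in $\sigma_1$ first via the elementary convolution bound $\int_\R\langle x-A\rangle^{-2/3}\langle x-B\rangle^{-2/3}\d x\lesssim\langle A-B\rangle^{-1/3}$, together with the algebraic identity $k_1^3+(k-k_1)^3=k^3+3kk_1(k_1-k)$, reduces to
\[
I(\sigma,k)\lesssim\int\langle\sigma-4\pi^2k^3-12\pi^2kk_1(k_1-k)\rangle^{-1/3}(\d k_1)_\lambda.
\]
Substituting $u=k_1(k_1-k)=(k_1-k/2)^2-k^2/4$, the map $k_1\mapsto u$ is two-to-one with Jacobian $|2k_1-k|$. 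Splitting the $k_1$-domain dyadically by $|2k_1-k|\sim M$ and pushing the counting measure $(\d k_1)_\lambda$ forward to the $u$-line (the $1/\lambda$ density of $\Z/\lambda$ being exactly absorbed into the $1/\lambda$ built into $(\d k_1)_\lambda$) gives a $\lambda$-independent one-dimensional integral of $\langle\alpha-12\pi^2ku\rangle^{-1/3}$ on each dyadic shell that is bounded uniformly, and the $M$-sum is then harmless.

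The main obstacle will be handling the degenerate regime $k_1\approx k/2$ where the Jacobian vanishes and the substitution becomes singular: here I would argue separately on a thin neighborhood whose width matches the smallest dyadic shell, using that the integrand is uniformly bounded there and the neighborhood carries controlled $(\d k_1)_\lambda$-measure. A secondary subtlety is that the entire argument is sharp at $b=\frac{1}{3}$: both the $\sigma_1$-convolution gain $\langle A-B\rangle^{-1/3}$ and the subsequent $k_1$-integral are barely convergent, and only the endpoint matching makes the chain close with a $\lambda$-independent constant. Intuitively, $(\d k_1)_\lambda$ renormalizes $\Z/\lambda$ so that in the formal limit $\lambda\to\infty$ one recovers the $L^4$ Strichartz estimate for Airy on $\R$; the proof makes this rigorous for each finite $\lambda\geq 1$.
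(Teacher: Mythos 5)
The paper does not actually prove this lemma: it is quoted verbatim from \cite[Lemma 7.3]{Colliander} (going back to Bourgain's periodic $L^4$ estimate for the Airy group \cite{Bourgain}), and no proof is reproduced even in the supplementary material. So the only thing to assess is your argument itself. Your reduction --- Plancherel, $\|\eta w\|_{L^4_{t,x}}^4=\|\widetilde{\eta w}*\widetilde{\eta w}\|_{L^2}^2$, and Cauchy--Schwarz to isolate the weight integral $I(\sigma,k)$ --- is the standard Schur-test template, and both the one-dimensional bound $\int_\R\langle x-A\rangle^{-2/3}\langle x-B\rangle^{-2/3}\,\d x\lesssim\langle A-B\rangle^{-1/3}$ and the identity $k_1^3+(k-k_1)^3=k^3+3kk_1(k_1-k)$ are correct. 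The fatal problem is the last step: $\sup_{\sigma,k}I(\sigma,k)=+\infty$, so the Schur test does not close at $b=\tfrac13$.

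Concretely, for $k=0$ the resonance function vanishes identically, the $\sigma_1$-integration yields $\langle\sigma\rangle^{-1/3}$ uniformly in $k_1$, and $\int(\d k_1)_\lambda=\infty$. For $k\neq0$ your own dyadic computation already exposes the divergence: on the shell $|2k_1-k|\sim M$ the variable $u=k_1(k_1-k)$ sweeps an interval of length $\sim M^2$ with Jacobian $\sim M$, so
\begin{align*}
\int_{|2k_1-k|\sim M}\langle\alpha-12\pi^2ku\rangle^{-1/3}\,(\d k_1)_\lambda
\;\sim\;\frac{1}{M|k|}\int_{|v|\lesssim |k|M^2}\langle\alpha-v\rangle^{-1/3}\,\d v
\;\sim\;|k|^{-1/3}M^{1/3},
\end{align*}
which is not uniformly bounded in $M$ and whose dyadic sum diverges; equivalently, the integrand decays only like $|k|^{-1/3}|k_1|^{-2/3}$ as $|k_1|\to\infty$, which is not $(\d k_1)_\lambda$-integrable. (The degenerate region $k_1\approx k/2$ that you single out as the main obstacle is in fact harmless; the loss occurs at large $|k_1|$.) This is exactly why the known proofs do not run a single global Cauchy--Schwarz: one first localizes dyadically in the modulations $\langle\sigma_i-4\pi^2k_i^3\rangle\sim L_i$, proves a bilinear $L^2$ bound for each pair $(L_1,L_2)$ from the lattice-point count $\#\{k_1\in\Z/\lambda:\ |\alpha-12\pi^2kk_1(k_1-k)|\lesssim L_{\max}\}\lesssim\lambda(L_{\max}/|k|)^{1/2}+1$ (with the output mode $k=0$ handled separately, e.g.\ via $X_{0,\frac13}\hookrightarrow L^4_tL^2_x$), and then sums over the dyadic modulations; it is this summation, not a pointwise supremum of a weight, that makes the endpoint $b=\tfrac13$ close. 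Note also that even for $b>\tfrac12$ the pure Schur test fails at output frequency $k=0$ for this bilinear form. As written, the proposal has a genuine gap at its central step.
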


\begin{corollary}\label{corollary:Z0}
	If $w\in Z^0$ is a $\lambda$-periodic function, then
	\begin{align}
		\|w\|_{Z^0}\leq C\|w\|_{L^{\frac{4}{3}}_t L^{\frac{4}{3}}_x},
	\end{align}
	where $C$ is a positive constant independent of $\lambda$.
\end{corollary}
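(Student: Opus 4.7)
The plan is to reduce $\|w\|_{Z^0}$ to the single Bourgain-space norm $\|w\|_{X_{0,-1/3}}$, and then to obtain the desired bound by dualizing the Strichartz-type embedding from Lemma~\ref{lem:embedding_theorem}.

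For the reduction, I would split $\|w\|_{Z^0}$ into its two constituent terms. The estimate $\|w\|_{X_{0,-1/2}} \leq \|w\|_{X_{0,-1/3}}$ is immediate from the pointwise inequality $\langle \sigma - 4\pi^2 k^3\rangle^{-1/2} \leq \langle \sigma - 4\pi^2 k^3\rangle^{-1/3}$. For the $L^2((\d k)_\lambda) L^1(\d\sigma)$ term, I would apply Cauchy--Schwarz in $\sigma$, splitting the weight as $\langle \sigma - 4\pi^2 k^3\rangle^{-1} = \langle \sigma - 4\pi^2 k^3\rangle^{-1/3}\cdot \langle \sigma - 4\pi^2 k^3\rangle^{-2/3}$, so that
\begin{align*}
	\int \frac{|\tilde w(\sigma,k)|}{\langle \sigma - 4\pi^2 k^3\rangle}\,\d\sigma \leq \Big(\int \frac{|\tilde w(\sigma,k)|^2}{\langle \sigma - 4\pi^2 k^3\rangle^{2/3}}\,\d\sigma\Big)^{1/2} \Big(\int \frac{\d\sigma}{\langle \sigma - 4\pi^2 k^3\rangle^{4/3}}\Big)^{1/2}.
\end{align*}
The second factor is a finite $k$- and $\lambda$-independent constant (the exponent $4/3>1$ makes it convergent at infinity, and the integral is translation-invariant in $\sigma$). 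Taking $L^2((\d k)_\lambda)$ yields the bound by $\|w\|_{X_{0,-1/3}}$.

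The second step is to dualize Lemma~\ref{lem:embedding_theorem}. Under the natural $L^2_{t,x}$ pairing, $X_{0,-1/3}$ is the topological dual of $X_{0,1/3}$, so
\begin{align*}
	\|w\|_{X_{0,-1/3}} = \sup_{\|v\|_{X_{0,1/3}} \leq 1} \Big|\int w \bar v\,\d t\,\d x\Big| \leq \|w\|_{L^{4/3}_t L^{4/3}_x}\, \sup_{\|v\|_{X_{0,1/3}} \leq 1} \|v\|_{L^4_t L^4_x},
\end{align*}
where the second inequality is Hölder's. Lemma~\ref{lem:embedding_theorem} then bounds the remaining supremum by a $\lambda$-independent constant, and combining this with the first step delivers Corollary~\ref{corollary:Z0} with $\lambda$-independent $C$.

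The main technical point is that Lemma~\ref{lem:embedding_theorem} literally controls $\|\eta v\|_{L^4_{t,x}}$ rather than $\|v\|_{L^4_{t,x}}$ itself. I would address this either by a time partition-of-unity argument using integer translates of $\eta$, exploiting the translation invariance of $\|\cdot\|_{X_{0,1/3}}$ in $t$ together with the $\ell^4$-summability of the resulting $L^4$ pieces, or by observing that in the applications of this corollary within the paper, the function $w$ carries a temporal cutoff so that effectively $\eta w \equiv w$ on its support. Either route preserves the $\lambda$-independence of the constant and yields the stated inequality.
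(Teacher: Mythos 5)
Your argument is essentially the paper's proof reorganized: both rest on dualizing the $L^4$ Strichartz embedding of Lemma~\ref{lem:embedding_theorem} and on the convergence of $\int\langle\sigma-4\pi^2k^3\rangle^{-4/3}\,\d\sigma$. The difference is that you first collapse both components of the $Z^0$ norm into the single norm $\|w\|_{X_{0,-1/3}}$ (the $X_{0,-1/2}$ piece trivially, the $L^2((\d k)_\lambda)L^1(\d\sigma)$ piece by Cauchy--Schwarz with the weight split $\langle\cdot\rangle^{-1}=\langle\cdot\rangle^{-1/3}\langle\cdot\rangle^{-2/3}$) and then dualize once, whereas the paper keeps the two pieces separate: it obtains $\|w\|_{X_{0,-1/2}}\lesssim\|w\|_{L^{4/3}_tL^{4/3}_x}$ directly from $(L^4_tL^4_x)'\hookrightarrow (X_{0,1/2})'$, and handles the second piece by testing against $\tilde f\in L^2((\d k)_\lambda)L^\infty(\d\sigma)$, applying Parseval and H\"older, and only then invoking the $L^4$ embedding together with the same $\langle\cdot\rangle^{-4/3}$ integral. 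Your single-dualization version is marginally cleaner, equally uniform in $\lambda$, and mathematically interchangeable with the paper's.

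On the cutoff issue you flag: this is a genuine point, and the paper's own proof silently uses the embedding $X_{0,1/3}\hookrightarrow L^4_tL^4_x$ without the cutoff $\eta$ in exactly the same places, so you are being more careful than the source. However, your first proposed repair is not justified as written: with a partition of unity in $t$, each piece satisfies only $\|\chi(\cdot-n)v\|_{L^4_tL^4_x}\lesssim\|v\|_{X_{0,1/3}}$ with the \emph{full} norm on the right, so the claimed $\ell^4$-summability does not come for free and summing the fourth powers of these bounds diverges. To make that route rigorous one needs the almost-orthogonality estimate $\sum_n\|\chi(\cdot-n)v\|_{X_{0,1/3}}^2\lesssim\|v\|_{X_{0,1/3}}^2$ (a localization property of $H^b_t$ for $0\le b\le 1$, applied in the interaction picture), after which $\ell^2\hookrightarrow\ell^4$ finishes. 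Your second repair is the cleaner one and matches the paper's implicit convention: every application of the corollary is to a function of the form $\eta(t)(\cdots)$ supported in $|t|\le 2$, so in the duality pairing one may replace $v$ by $\eta_1v$ for a bump $\eta_1\equiv1$ on the support at no cost, and Lemma~\ref{lem:embedding_theorem} applies verbatim to $\eta_1v$. With that reading your proof is complete and the constant is independent of $\lambda$.
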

For the convenience of readers, the proof of this corollary is presented in the supplementary material.

\begin{lemma}[Bernstein's type inequality]\label{lem:Bernstein}
	Let $ \Pi_{N_{\lambda}} $ be the projection operator defined by \eqref{eq:def_Pi_lambda}, and let $ \Pi_{>N_{\lambda}} = I - \Pi_{N_{\lambda}} $. Then, the following inequalities hold:
	\begin{equation}\label{eq:Bernstein_1}
		\begin{array}{l}
			\|\Pi_{N_{\lambda}} u\|_{H^{\gamma}} \leq C N_{\lambda}^{\gamma-s} \|u\|_{H^s}, \quad \|\Pi_{N_{\lambda}} u\|_{X_{\gamma,b}} \leq C N_{\lambda}^{\gamma-s} \|u\|_{X_{s,b}}, \\[2mm]
			\|\Pi_{N_{\lambda}} u\|_{Y^{\gamma}} \leq C N_{\lambda}^{\gamma-s} \|u\|_{Y^s}, \quad \|\Pi_{N_{\lambda}} u\|_{Z^{\gamma}} \leq C N_{\lambda}^{\gamma-s} \|u\|_{Z^s},
		\end{array}
	\end{equation}
	for all $ s \leq \gamma $, $ b \in \mathbb{R} $, and
	\begin{align}\label{eq:Bernstein_3}
		\begin{array}{l}
			\|\Pi_{>N_{\lambda}} u\|_{H^{\gamma}} \leq C N_{\lambda}^{\gamma-s} \|u\|_{H^s}, \quad \|\Pi_{>N_{\lambda}} u\|_{X_{\gamma,b}} \leq C N_{\lambda}^{\gamma-s} \|u\|_{X_{s,b}}, \\[2mm]
			\|\Pi_{>N_{\lambda}} u\|_{Y^{\gamma}} \leq C N_{\lambda}^{\gamma-s} \|u\|_{Y^s}, \quad \|\Pi_{>N_{\lambda}} u\|_{Z^{\gamma}} \leq C N_{\lambda}^{\gamma-s} \|u\|_{Z^s},
		\end{array}
	\end{align}
	for all $ \gamma \leq s $, $ b \in \mathbb{R} $, and
	\begin{align}\label{eq:Bernstein_2}
		\|\Pi_{N_{\lambda}} u\|_{L^{4}} \leq C N_{\lambda}^{\frac{1}{4}} \|u\|_{L^2},
	\end{align}
	with a positive constant $ C $ independent of $ \lambda $.
\end{lemma}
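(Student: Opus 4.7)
My plan is to derive all the weighted inequalities in \eqref{eq:Bernstein_1}--\eqref{eq:Bernstein_3} from one elementary Fourier-side observation: for $\gamma\geq s$ the pointwise bound $\langle k\rangle^{\gamma-s}\mathbf{1}_{|k|\leq N_\lambda}\lesssim N_\lambda^{\gamma-s}$ holds on the support of $\widehat{\Pi_{N_\lambda}u}$, while for $\gamma\leq s$ the dual bound $\langle k\rangle^{\gamma-s}\mathbf{1}_{|k|>N_\lambda}\leq N_\lambda^{\gamma-s}$ holds on the support of $\widehat{\Pi_{>N_\lambda}u}$. In either case, writing $\langle k\rangle^\gamma=\langle k\rangle^{\gamma-s}\langle k\rangle^s$ on the appropriate Fourier support, the weight $\langle k\rangle^{\gamma-s}$ can be pulled out as a scalar factor $N_\lambda^{\gamma-s}$, leaving behind precisely the $H^s$/$X_{s,b}$/$Y^s$/$Z^s$ integrand for $u$.

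I would then run through the four norm classes in parallel. The $H^\gamma$ case reduces to taking $L^2((\d k)_\lambda)$ of the majorized Fourier weight. For $X_{\gamma,b}$, the spatial projection acts only on the $k$-variable, so the resonance weight $\langle\sigma-4\pi^2k^3\rangle^b$ is untouched and the pointwise bound passes through the full $L^2_\sigma L^2_k$ norm unchanged. The additional summands defining $Y^\gamma$ and $Z^\gamma$ are mixed $L^2((\d k)_\lambda)L^1(\d\sigma)$ norms of $\langle k\rangle^\gamma\tilde{w}$ (with or without the $\langle\sigma-4\pi^2k^3\rangle^{-1}$ denominator), and since the cutoff in $k$ commutes with all of these operations, the same scalar factor $N_\lambda^{\gamma-s}$ can be extracted. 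No step introduces a factor of $\lambda$, so the resulting constants are uniform in $\lambda$.

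For the $L^4$--$L^2$ Bernstein inequality \eqref{eq:Bernstein_2}, I would first establish the $L^\infty$--$L^2$ version via Fourier inversion and Cauchy--Schwarz with respect to the normalized counting measure: $\|\Pi_{N_\lambda}u\|_{L^\infty(0,\lambda)}\leq \|\mathbf{1}_{|k|\leq N_\lambda}\|_{L^2((\d k)_\lambda)}\,\|u\|_{L^2(0,\lambda)}\lesssim N_\lambda^{1/2}\|u\|_{L^2}$, where the counting bound $\|\mathbf{1}_{|k|\leq N_\lambda}\|_{L^2((\d k)_\lambda)}^2=\lambda^{-1}\#\{k\in\Z/\lambda:|k|\leq N_\lambda\}\lesssim N_\lambda$ follows directly from the definition \eqref{eq:def_dk_lambda} of $(\d k)_\lambda$. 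Interpolating via $\|\Pi_{N_\lambda}u\|_{L^4}^4\leq \|\Pi_{N_\lambda}u\|_{L^\infty}^2\|\Pi_{N_\lambda}u\|_{L^2}^2\lesssim N_\lambda\|u\|_{L^2}^4$ and taking fourth roots yields the claim. There is no serious obstacle; the only point requiring care throughout is the $\lambda$-independence of the constants, which is guaranteed by the normalization $(\d k)_\lambda=\lambda^{-1}\sum_{k\in\Z/\lambda}$ that makes both Plancherel's identity and the counting estimate above free of any $\lambda$-prefactor (the degenerate regime $N_\lambda<1$ is absorbed into the implicit constant).
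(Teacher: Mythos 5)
Your proof is correct. For the weighted estimates \eqref{eq:Bernstein_1}--\eqref{eq:Bernstein_3} you argue exactly as the paper does (the supplementary material simply says these "can be directly proven using the definition of the projection operator", which is precisely your pointwise Fourier-multiplier bound $\langle k\rangle^{\gamma-s}\mathbf{1}_{|k|\le N_\lambda}\lesssim N_\lambda^{\gamma-s}$ pulled through each norm). The only genuine divergence is in \eqref{eq:Bernstein_2}: the paper invokes the scaling-invariant Sobolev embedding $\|\Pi_{N_\lambda}u\|_{L^4}\lesssim\|\Pi_{N_\lambda}u\|_{H^{1/4}}$ on $[0,\lambda]$ and then applies \eqref{eq:Bernstein_1} with $\gamma=\tfrac14$, $s=0$, whereas you prove an $L^\infty$--$L^2$ Bernstein bound by Cauchy--Schwarz against the counting measure $(\d k)_\lambda$ and interpolate via $\|f\|_{L^4}^4\le\|f\|_{L^\infty}^2\|f\|_{L^2}^2$. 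Your route is more self-contained: it avoids having to justify that the Sobolev constant on the rescaled torus is uniform in $\lambda$, and your counting estimate $\lambda^{-1}\#\{k\in\Z/\lambda:|k|\le N_\lambda\}=\lambda^{-1}(2N+1)\le 3N_\lambda$ is clean since $N\ge1$ (so the "degenerate regime" you mention never actually causes trouble for the $L^4$ bound). The paper's route is shorter on the page but leans on an external embedding. Both give $\lambda$-independent constants, and the cost is the same $N_\lambda^{1/4}$.
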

For the convenience of readers, the proof of this lemma is presented in the supplementary material.

\section{Uniform boundedness of the numerical solution}\label{sec:stability}

Recalling the continuation problem for the numerical solution in \eqref{eq:continuation_problem}, we note that the Bourgain norm--a space-time norm combining spatial and temporal regularity--requires localizing the right-hand side of \eqref{eq:continuation_problem} in time. This localization is important for estimating both $\U$ and the numerical error $u_{\lambda}-\U$ within the Bourgain framework. For this purpose, we introduce the following nonlinear operator:

\begin{align}\label{eq:def_Gamma}
	\Gamma(w)(t)=\eta(t)e^{-t \partial_x^3}\phi_{\lambda}+\frac{1}{2}\eta(t)\int_{0}^{t}e^{(s-t)\partial_x^3}\eta(s)\partial_{x}\left(w(s)\right)^2\d s+\mathcal{A}(w)(t),
\end{align}
where we denote
\begin{align*}
	\mathcal{A}(w)(t)&=\mathcal{A}_1(w)(t)+\mathcal{A}_2(w)(t)+\mathcal{A}_3(w)(t),\\
	\mathcal{A}_1(w)(t)&:=-\eta(t)\int_{0}^{t}e^{(s-t)\partial_x^3}\eta(s)\Pi_{N_{\lambda}}\partial_{x}\Big(\Pi_{N_{\lambda}}\F(s,w)\Pi_{N_{\lambda}}w(s)\Big)\d s,\\
	\mathcal{A}_2(w)(t)&:=\frac{1}{2}\eta(t)\int_{0}^{t}e^{(s-t)\partial_x^3}\eta(s)\Pi_{N_{\lambda}}\partial_{x}\left(\Pi_{N_{\lambda}}\F(s,w)\right)^2\d s,\\
	\mathcal{A}_3(w)(t)&:=\frac{1}{2}\eta(t)\int_{0}^{t}e^{(s-t)\partial_x^3}\eta(s)\partial_{x}\left[\Pi_{N_{\lambda}}\left(\Pi_{N_{\lambda}}w(s)\right)^2-\left(w(s)\right)^2\right]\d s.
\end{align*}

Since $\eta(t) \equiv 1$ for $t \in [-1,1]$ (see \eqref{eq:bump_function}), comparing the continuation problem \eqref{eq:continuation_problem} with \eqref{eq:def_Gamma} yields the following key relationship:


\begin{remark}\label{lem:relation_Gamma}
	\upshape
	For fixed $N_{\lambda}$ and $\tau_{\lambda}$, suppose $\U^* \in Y^s$ for $s \in \left( -\frac{1}{2}, 0 \right]$ is a fixed point of $\Gamma$ (i.e., $\Gamma(\U^*) = \U^*$). 
	Then, $\U^*(t) \equiv \U(t)$ for $t \in [0, 1]$, where $\U$ is defined in \eqref{eq:continuation_solution}.
\end{remark}

We will demonstrate the existence of a fixed point for $\Gamma$ within the Bourgain space $Y^s$. To this end we construct the iterative sequence $\U^\ell$ in the following way:
\begin{align}\label{eq:iterative_sequence}
	\U^0=0, \quad \text{and}\quad \U^{\ell+1}=\Gamma(\U^\ell),\quad \text{for}\quad \ell\geq 0.
\end{align}
Using the Bourgain estimate, we establish the following boundedness results for the iterative sequence.
%

\begin{lemma}\label{lem:boundedness_um}
	Let $\Gamma$ be the nonlinear operator defined in \eqref{eq:def_Gamma}, with $\phi_{\lambda} \in H^{s}(0, \lambda)$, and $N_{\lambda}, \tau_{\lambda} > 0$ being the rescaled quantities of $\phi \in H^s(0, 1)$ and $N, \tau > 0$ defined in \eqref{eq:lambda_KdV_equation} and \eqref{eq:lambda_filtered_LRI}. Then, for fixed $N$ and $\tau$ satisfying $N \lesssim \tau^{-\frac{1}{2 - 2s}}$ for $s \in \left(-\frac{1}{2}, 0\right]$, there exists a positive constant $\lambda_0$ independent of $N$ and $\tau$, but possibly dependent on $\|\phi\|_{H^s}$, such that for any $\lambda \geq \lambda_0$, the following bound holds:
	\begin{align}\label{eq:boundedness_um}
		\|\U^\ell\|_{Y^s} \leq C^{\#} \|\phi_{\lambda}\|_{H^s},
	\end{align}
	for all $\ell \geq 0$, where $C^{\#}$ is a positive constant independent of $\tau$, $N$, and $\lambda$.
\end{lemma}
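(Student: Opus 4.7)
I would prove the claim by induction on $\ell$, setting $M := 2C_0 \|\phi_\lambda\|_{H^s}$ where $C_0$ is the constant from Lemma~\ref{lem:free_solution}. The base case $\U^0 = 0$ is immediate. For the inductive step, assuming $\|\U^\ell\|_{Y^s} \leq M$, I estimate $\|\Gamma(\U^\ell)\|_{Y^s}$ by bounding each piece of $\Gamma$ separately. The free-evolution term contributes $C_0\|\phi_\lambda\|_{H^s}$ by Lemma~\ref{lem:free_solution}, and the main Duhamel term is bounded by $C\lambda^{0+}\|\U^\ell\|_{Y^s}^2$ by combining the energy estimate (Lemma~\ref{lem:energy_estimate}) with the bilinear estimate (Lemma~\ref{lem:bilinear_estimate} and Remark~\ref{rem:bilinear_estimate}). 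For the correction $\mathcal{A}_3$, the triangle inequality splits the nonlinearity into $\Pi_{N_\lambda}(\Pi_{N_\lambda}w)^2$ and $w^2$, and applying the bilinear estimate to each (using that $\Pi_{N_\lambda}$ is bounded on $X_{s,1/2}$ with norm at most $1$) produces $\|\mathcal{A}_3(w)\|_{Y^s} \leq C\lambda^{0+}\|w\|_{Y^s}^2$, of the same form as the main Duhamel contribution.

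The main obstacle lies in controlling $\mathcal{A}_1$ and $\mathcal{A}_2$, both of which contain the short-time integral $\F$ defined in \eqref{eq:def_F0}--\eqref{eq:def_F}. The plan is first to derive a stand-alone estimate for $\F$ in a suitable Bourgain-type norm by exploiting three ingredients: the integration interval $[t_n, t_{n+1}]$ has length only $\tau_\lambda$, the operator $\partial_x\Pi_{N_\lambda}$ absorbs a Bernstein factor in $N_\lambda$ via Lemma~\ref{lem:Bernstein}, and the quadratic product is controllable through the $L^4_tL^4_x$-embedding (Lemma~\ref{lem:embedding_theorem}) combined with the $Z^0$-estimate of Corollary~\ref{corollary:Z0}. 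The target is a piecewise bound of the schematic form $\|\F(\cdot, w)\|_{X_{s,1/2}} \lesssim \tau_\lambda^{\alpha} N_\lambda^{\beta} \|w\|_{Y^s}^2$ for some $\alpha > 0$ and some exponent $\beta = \beta(s)$. Feeding this into the bilinear estimate applied to $\partial_x(\Pi_{N_\lambda}\F \cdot \Pi_{N_\lambda}w)$ inside $\mathcal{A}_1$ and to $\partial_x(\Pi_{N_\lambda}\F)^2$ inside $\mathcal{A}_2$ then yields cubic and quartic bounds in $\|w\|_{Y^s}$ respectively, weighted by mixed powers of $\tau_\lambda$ and $N_\lambda$.

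To close the induction I would exploit two complementary smallness mechanisms. First, the rescaling identity $\|\phi_\lambda\|_{H^s} \leq \lambda^{-3/2-s}\|\phi\|_{H^s}$ from \eqref{eq:relation_Hs} produces a genuinely small factor since $-3/2-s < -1$ for $s > -1/2$. Second, the CFL-type condition $N \lesssim \tau^{-1/(2-2s)+}$ rescales to $\tau_\lambda N_\lambda^{2-2s} \lesssim \lambda^{1+2s}$, which suffices to convert the mixed factor $\tau_\lambda^{\alpha} N_\lambda^{\beta}$ in the $\mathcal{A}_1, \mathcal{A}_2$ estimates into a controlled power of $\lambda$. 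The hard part is that the bilinear estimate sits precisely at the endpoint $b = 1/2$ and carries no inherent smallness in the length of the time window, so the decay needed to close the fixed-point argument must be engineered entirely through the rescaling parameter $\lambda$. Taking $\lambda \geq \lambda_0$ large enough forces $C\lambda^{0+}M + (\text{higher-order }\mathcal{A}\text{ contributions}) \leq M/2$, hence $\|\Gamma(\U^\ell)\|_{Y^s} \leq M$, completing the induction with a constant $C^\# = 2C_0$ independent of $\tau$, $N$, and $\lambda$.
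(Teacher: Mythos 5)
Your overall architecture (induction with $C^{\#}=2C_0$, free evolution via Lemma~\ref{lem:free_solution}, main Duhamel term via Lemmas~\ref{lem:energy_estimate} and \ref{lem:bilinear_estimate}, smallness engineered entirely through the rescaling $\|\phi_\lambda\|_{H^s}\leq\lambda^{-3/2-s}\|\phi\|_{H^s}$ and the condition $N\lesssim\tau^{-1/(2-2s)}$) matches the paper, and your treatment of $\mathcal{A}_3$ by the bilinear estimate is a harmless variant of the paper's device of absorbing $\mathcal{A}_3$ into the projected quadratic term via the reformulation \eqref{eq:def_Gamma_equivalent}. However, your plan for $\mathcal{A}_1$ and $\mathcal{A}_2$ contains a genuine gap. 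You propose to prove a stand-alone bound $\|\F(\cdot,w)\|_{X_{s,1/2}}\lesssim\tau_\lambda^{\alpha}N_\lambda^{\beta}\|w\|_{Y^s}^2$ and then feed $\Pi_{N_\lambda}\F$ into the bilinear estimate. This cannot work as stated: $\F(t,\U)=F^n(t,\U(t_n))$ is defined slice by slice and is \emph{discontinuous in time} at each node $t_{n+1}$ (it vanishes at the left endpoint of each slice but not at the right endpoint of the previous one), and a function with jump discontinuities in $t$ does not belong to $X_{s,1/2}$, since $b=\tfrac12$ is exactly the endpoint at which characteristic functions of intervals fail to have the required temporal regularity. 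The bilinear estimate of Lemma~\ref{lem:bilinear_estimate} requires both factors in $X_{-1/2,1/2}$, so the step ``feed $\F$ into the bilinear estimate'' is unavailable. The paper circumvents this by never placing $\F$ in a $b=\tfrac12$ space: it puts the products $\Pi_{N_\lambda}\F\cdot\Pi_{N_\lambda}\U^\ell$ and $(\Pi_{N_\lambda}\F)^2$ into $Z^0$ (where $b=-\tfrac12$ tolerates rough time dependence) via Corollary~\ref{corollary:Z0} and the $L^{4/3}_tL^{4/3}_x$ norm, then applies H\"older to isolate $\|\eta\,\Pi_{N_\lambda}\F\|_{L^2_tL^2_x}$ and $\|\cdot\|_{L^4_tL^4_x}$ factors.

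There is a second, quantitative gap hiding in your phrase ``piecewise bound.'' Estimating $\F$ slice by slice through $L^\infty_t$-in-time bounds on $\U^\ell$ is, as the paper explicitly warns, too coarse: summing $\tau_\lambda^{-1}$ slices each of size $O(\tau_\lambda^2)$ only yields a single power of $\tau_\lambda$ and loses the structure needed to close the argument. The paper's actual estimate of $\|\eta\,\Pi_{N_\lambda}\F\|_{L^2_tL^2_x}$ decomposes the integrand using $\mathcal{N}_n^\ell(s)=\Pi_{N_\lambda}\U^\ell(s)-e^{(t_n-s)\partial_x^3}\Pi_{N_\lambda}\U^\ell(t_n)$; the leading contribution $I_1$ is controlled by summing the \emph{fourth powers} of local $L^4(t_n,t_{n+1};L^4)$ norms back into the global $L^4_tL^4_x$ norm and invoking Lemma~\ref{lem:embedding_theorem}, while the remainder terms $I_2,I_3$ require bounding $\mathcal{N}_n^\ell$ by unwinding the recursion $\U^\ell=\Gamma(\U^{\ell-1})$ and exploiting the factor $|\eta(t)-\eta(t_n)|\lesssim\tau_\lambda^{1/3}$. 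None of this machinery appears in your sketch, and without it the induction does not close. Your identification of the three ingredients (short interval, Bernstein, $L^4$ embedding) is on target, but the norm in which $\F$ is measured and the summation-over-slices mechanism are the actual crux of the lemma and are missing or misdirected in the proposal.
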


\begin{proof}
For better readability, we provide the detailed proof of this lemma to Appendix~\ref{Appendix:B} in the supplementary material.
\end{proof}

We are now ready to demonstrate  the existence of the fixed point of $\Gamma$ in the Bourgain space $Y^s$:
\begin{theorem}\label{thm:existence}
	Under the conditions of Lemma~\ref{lem:boundedness_um}, there exists a positive constant $\lambda_0$ independent of $N$ and $\tau$ but possibly dependent on $\|\phi\|_{H^s}$ such that for any $\lambda\geq \lambda_0$ the equation $\Gamma(\U^*)=\U^*$ admits a solution $\U^*\in Y^s$ satisfying 
	\begin{align}\label{eq:boundedness_u*}
		\|\U^*\|_{Y^s}\leq C^{\#}\|\phi_{\lambda}\|_{H^s},
	\end{align}
	where $C^{\#}$ is a positive constant independent of $\tau$, $N$ and $\lambda$.
\end{theorem}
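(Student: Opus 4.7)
The plan is to establish existence of the fixed point $\U^*$ by showing that the Picard iteration sequence $\{\U^\ell\}$ defined in \eqref{eq:iterative_sequence} is Cauchy in $Y^s$; since $Y^s$ is complete, the limit $\U^*$ will automatically satisfy $\Gamma(\U^*)=\U^*$, and the uniform bound \eqref{eq:boundedness_u*} will be inherited from the uniform bound \eqref{eq:boundedness_um} of Lemma~\ref{lem:boundedness_um}. The idea is essentially the standard contraction mapping argument, but carried out along the already-constructed iteration so that the uniform bounds from Lemma~\ref{lem:boundedness_um} may be invoked directly without re-proving them.

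To obtain the Cauchy property, I would set $w^\ell:=\U^{\ell+1}-\U^\ell$ and derive a recursive estimate of the form $\|w^{\ell+1}\|_{Y^s}\le \kappa \|w^\ell\|_{Y^s}$ with some $\kappa<1$. The contribution of the principal integral term in $\Gamma$ reduces, by the energy estimate (Lemma~\ref{lem:energy_estimate}) followed by the bilinear estimate (Lemma~\ref{lem:bilinear_estimate} together with Remark~\ref{rem:bilinear_estimate}), to bounding $\|\eta(t)\partial_x(\U^{\ell+1}+\U^\ell)(\U^{\ell+1}-\U^\ell)\|_{Z^s}$, which is controlled by $C\lambda^{0+}(\|\U^{\ell+1}\|_{Y^s}+\|\U^\ell\|_{Y^s})\|w^\ell\|_{Y^s}\le 2CC^{\#}\lambda^{0+}\|\phi_\lambda\|_{H^s}\|w^\ell\|_{Y^s}$. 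By \eqref{eq:relation_Hs}, $\lambda^{0+}\|\phi_\lambda\|_{H^s}\le \lambda^{-\frac{3}{2}-s+}\|\phi\|_{H^s}$, which can be rendered arbitrarily small by enlarging $\lambda_0$ (this is exactly the place where the rescaling delivers the smallness needed at the endpoint Bourgain space). The perturbation terms $\mathcal{A}_1,\mathcal{A}_2,\mathcal{A}_3$ contribute similarly: their differences $\mathcal{A}_j(\U^{\ell+1})-\mathcal{A}_j(\U^\ell)$ are multilinear in $\U^{\ell+1},\U^\ell$ (with at least one factor being $\U^{\ell+1}-\U^\ell$), and the same chain of inequalities used in the proof of Lemma~\ref{lem:boundedness_um} (energy estimate, bilinear estimate, Bernstein-type Lemma~\ref{lem:Bernstein}, together with the embedding Lemma~\ref{lem:embedding_theorem} and Corollary~\ref{corollary:Z0} where appropriate) yields a bound of the same form, where moreover the $\mathcal{A}_j$ contributions are accompanied by the small factors in $N_\lambda$ and $\tau_\lambda$ already exploited in Lemma~\ref{lem:boundedness_um}.

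Choosing $\lambda_0$ sufficiently large, but still independent of $\tau,N$, I can ensure $\kappa\le \tfrac12$, so that $\|w^{\ell+1}\|_{Y^s}\le \tfrac12\|w^\ell\|_{Y^s}$ and $\{\U^\ell\}$ is a Cauchy sequence in $Y^s$. Its limit $\U^*$ exists in $Y^s$; continuity of $\Gamma$ on bounded subsets of $Y^s$ (which follows from precisely the same multilinear estimates) passes to the limit in $\U^{\ell+1}=\Gamma(\U^\ell)$ and gives $\Gamma(\U^*)=\U^*$. The bound \eqref{eq:boundedness_u*} then follows from \eqref{eq:boundedness_um} by lower semicontinuity of the norm, with the same constant $C^{\#}$.

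The main obstacle I anticipate is not any single estimate but rather ensuring that the smallness factor controlling the contraction constant is \emph{genuinely uniform} in $\tau$ and $N$. Because the bilinear estimate at the endpoint $X_{s,\frac12}$ provides only a $\lambda^{0+}$ factor rather than a decaying one, the necessary smallness must come entirely from $\lambda^{-\frac{3}{2}-s+}\|\phi\|_{H^s}$, which in turn forces $\lambda_0$ to depend on $\|\phi\|_{H^s}$ but not on $\tau$ or $N$. Verifying this dependence carefully, and checking that the $\mathcal{A}_j$ contributions do not degrade the contraction constant once the CFL-type relation $N\lesssim \tau^{-\frac{1}{2-2s}}$ between the discretization parameters is imposed (as in Lemma~\ref{lem:boundedness_um}), is the step that requires the most care.
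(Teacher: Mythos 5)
Your proposal follows essentially the same route as the paper: Picard iteration, uniform bounds imported from Lemma~\ref{lem:boundedness_um}, the energy and bilinear estimates producing a factor $\lambda^{0+}\|\phi_\lambda\|_{H^s}\lesssim\lambda^{-\frac{3}{2}-s+}\|\phi\|_{H^s}$ that is made small by enlarging $\lambda_0$, and multilinear difference estimates for the $\mathcal{A}_j$ under $N\lesssim\tau^{-\frac{1}{2-2s}}$. One small correction: because the sharp estimate of $\F(t,\U^\ell)$ in Lemma~\ref{lem:boundedness_um} proceeds through the decomposition $\mathcal{N}_n^\ell(s)=\Pi_{N_\lambda}\U^\ell(s)-e^{(t_n-s)\partial_x^3}\Pi_{N_\lambda}\U^\ell(t_n)$, which is expanded using $\U^\ell=\Gamma(\U^{\ell-1})$, the difference $\mathcal{A}_1(\U^\ell)-\mathcal{A}_1(\U^{\ell-1})$ also involves $\|\U^{\ell-1}-\U^{\ell-2}\|_{Y^s}$, so the paper obtains the two-term recursion $\|\U^{\ell+1}-\U^\ell\|_{Y^s}\le\frac{1}{4}\|\U^\ell-\U^{\ell-1}\|_{Y^s}+\frac{1}{4}\|\U^{\ell-1}-\U^{\ell-2}\|_{Y^s}$ rather than your one-step contraction; this still gives geometric decay and the Cauchy property, so the conclusion is unaffected, but the crude $L^\infty_t H^s_x$ bound you would need for a strict one-step estimate is exactly the one the paper warns is too coarse.
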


\begin{proof}
	Let $\{\U^\ell\}_{\ell \geq 0}$ be the iterative sequence given by \eqref{eq:iterative_sequence}. 
	Then, for $\ell\geq 2$, we have
	\begin{align}
		\U^{\ell+1}-\U^{\ell}=&\frac{1}{2}\eta(t)\int_{0}^{t}e^{(s-t)\partial_x^3}\eta(s)\Pi_{N_{\lambda}}\partial_{x}\Big(\Pi_{N_{\lambda}}\left(\U^\ell-\U^{\ell-1}\right)(s)\cdot\Pi_{N_{\lambda}}\left(\U^\ell+\U^{\ell-1}\right)(s)\Big)\d s\notag\\
		&+\mathcal{A}_1(\U^\ell)-\mathcal{A}_1(\U^{\ell-1})+\mathcal{A}_2(\U^\ell)-\mathcal{A}_2(\U^{\ell-1}),
	\end{align} 
	where we have used the equivalent reformulation of \eqref{eq:def_Gamma}, namely \eqref{eq:def_Gamma_equivalent}, as introduced in the supplementary material.
	
	From Lemma~\ref{lem:boundedness_um}, we know that the sequence is uniformly bounded in the $Y^s$ space, i.e., $\|\U^\ell\|_{Y^s}\leq C^{\#}\|\phi_{\lambda}\|_{H^s}$ for all $\ell\geq 0$.
	Combining the boundedness with Lemma~\ref{lem:energy_estimate},  Lemma~\ref{lem:bilinear_estimate}, and Remark~\ref{rem:bilinear_estimate}, we deduce that
	\begin{align}
		&\left\|\frac{1}{2}\eta(t)\int_{0}^{t}e^{(s-t)\partial_x^3}\eta(s)\Pi_{N_{\lambda}}\partial_{x}\Big(\Pi_{N_{\lambda}}\left(\U^\ell-\U^{\ell-1}\right)(s)\cdot\Pi_{N_{\lambda}}\left(\U^\ell+\U^{\ell-1}\right)(s)\Big)\d s\right\|_{Y^s}\notag\\
		&\quad \leq C\lambda^{0+}\left\|\U^\ell-\U^{\ell-1}\right\|_{Y^s}\left(\left\|\U^\ell\right\|_{Y^s}+\left\|\U^{\ell-1}\right\|_{Y^s}\right)\notag\\
		&\quad\leq 2C\cdot C^{\#}\lambda^{-\frac{3}{2}-s+}\|\phi\|_{H^s}\left\|\U^\ell-\U^{\ell-1}\right\|_{Y^s}.
	\end{align}
	Since $\mathcal{A}_1(w)$ and $\mathcal{A}_2(w)$ are multi-linear functions of $w$, we can estimate $\|\mathcal{A}_1(\U^\ell)-\mathcal{A}_1(\U^{\ell-1})\|_{Y^s}$ and $\|\mathcal{A}_2(\U^\ell)-\mathcal{A}_2(\U^{\ell-1})\|_{Y^s}$ in a same manner as in the proof of Lemma~\ref{lem:boundedness_um}:
	\begin{align}
		&\|\mathcal{A}_1(\U^\ell)-\mathcal{A}_1(\U^{\ell-1})\|_{Y^s}\notag\\
		&\leq C\tau_{\lambda}N_{\lambda}^{2-2s}(C^{\#})^2\|\phi_{\lambda}\|^2_{H^s}\left\|\U^\ell-\U^{\ell-1}\right\|_{Y^s}\notag\\
		&\quad+C\tau_{\lambda}^\frac{4}{3} N_{\lambda}^{\frac{5}{2}-2s}C^{\#}\|\phi_{\lambda}\|^2_{H^s}\Big(\|\U^{\ell-1}-\U^{\ell-2}\|_{Y^s}+\left\|\U^{\ell}-\U^{\ell-1}\right\|_{Y^s}\Big)\notag\\
		&\quad+C\tau_{\lambda}^\frac{5}{3} N_{\lambda}^{\frac{5}{2}-2s}\|\phi_{\lambda}\|_{H^s}^2\left\|\U^{\ell}-\U^{\ell-1}\right\|_{Y^s}\notag\\
		&\quad+C\sum_{j=2}^{7}\tau_{\lambda}^{j}N_{\lambda}^{[1+\frac{3j}{2}-(j+1)s]}(C^{\#})^{j+1}\|\phi_{\lambda}\|_{H^s}^{j+1}\Big(\|\U^{\ell-1}-\U^{\ell-2}\|_{Y^s}+\left\|\U^{\ell}-\U^{\ell-1}\right\|_{Y^s}\Big)\notag\\
		&=:\tilde{\mathcal{C}}\left(s,\tau_{\lambda},N_{\lambda},\left\|\U^{\ell}\right\|_{Y^s},\|\U^{\ell-1}\|_{Y^s}, \|\U^{\ell-2}\|_{Y^s}\right),
	\end{align}
	and
	\begin{align*}
		&\|\mathcal{A}_2(\U^\ell)-\mathcal{A}_2(\U^{\ell-1})\|_{Y^s}\\
		&\quad\leq C\tau_{\lambda}N_{\lambda}^{\frac{7}{4}-s}C^{\#}\left\|\phi_{\lambda}\right\|_{H^s}\tilde{\mathcal{C}}\left(s,\tau_{\lambda},N_{\lambda},\left\|\U^{\ell}\right\|_{Y^s},\|\U^{\ell-1}\|_{Y^s}, \|\U^{\ell-2}\|_{Y^s}\right).
	\end{align*}
	Under condition $N\lesssim \tau^{-\frac{1}{2-2s}}$, it  follows from the same discussions as in \eqref{eq:bound_constant_1}--\eqref{eq:bound_constant_3} that there exists $\lambda_0>1$ independent of $\tau$ and $N$, such that for $\lambda>\lambda_0$, the following inequality holds
	\begin{align*}
		\|\U^{\ell+1}-\U^{\ell}\|_{Y^s}\leq \frac{1}{4}\|\U^{\ell}-\U^{\ell-1}\|_{Y^s}+\frac{1}{4}\|\U^{\ell-1}-\U^{\ell-2}\|_{Y^s}.
	\end{align*}
	Therefore, the sequence $\{\U^\ell\}_{\ell \geq 0}$ is a Cauchy sequence in $Y^s$, and it converges to a fixed point $\U^*$ of the operator $\Gamma$.
\end{proof}

\section{Error estimate of the filtered low-regularity integrator \eqref{eq:filtered_LRI}}\label{sec:proof_of_main_theorem}
From \cite[Proposition~6]{Colliander}, we know that there exists a constant $\lambda_0>1$ such that when $\lambda\geq \lambda_0$, the  equation 
\begin{align}\label{eq:local_well_posedness}
	u_{\lambda}^*(t)=&\eta(t)e^{-t \partial_x^3}\phi_{\lambda}+\frac{1}{2}\eta(t)\int_{0}^{t}e^{(s-t)\partial_x^3}\eta(s)\partial_{x}\left(u_{\lambda}^{*}(s)\right)^2\d s,
\end{align}
has a unique solution in the $Y^s$ space and satisfies 
$$\|u_{\lambda}^*\|_{Y^s}\leq C^{\#}\|\phi_{\lambda}\|_{H^s}.$$
Since the bump function $\eta\equiv 1$ for $t \in [0, 1]$, we obtain the relation
\begin{align}\label{eq:relation_u*_u}
	u_{\lambda}^*(t)=u_{\lambda}(t),\quad \text{for}\quad t \in [0, 1].
\end{align}
Therefore, by combining Remark~\ref{lem:relation_Gamma} with Lemma~\ref{lem:embedding_Ys}, we have
\begin{align}\label{eq:u-u*}
	\|u_{\lambda}-\U\|_{L^{\infty}(0,1; H^{-\frac{1}{2}})}\lesssim \|u_{\lambda}^{*}-\U^*\|_{Y^{-\frac{1}{2}}},
\end{align}
where $\U^*$ is the solution to
\begin{align}\label{eq:Gamma_U*}
	\Gamma(\U^*)=\U^*,
\end{align}
constructed in the Theorem~\ref{thm:existence}.
Thus, we can transform the error estimate between $u(t_n)$ and $u^{n}$ into an estimate of the Bourgain norm of the error
\begin{align}\label{eq:def_error}
	e(t,x):=u_{\lambda}^{*}(t,x)-\U^*(t,x).
\end{align}

Subtracting \eqref{eq:Gamma_U*} from \eqref{eq:local_well_posedness}, we derive the error equation
\begin{align}\label{eq:error_function}
	u_{\lambda}^*(t)-\U^*(t)=&\eta(t)e^{-t \partial_x^3}\big(u_{\lambda}^*(0)-\U^*(0)\big)\notag\\
	&+\frac{1}{2}\eta(t)\int_{0}^{t}e^{(s-t)\partial_x^3}\eta(s)\partial_{x}\left[\left(u_{\lambda}^{*}(s)\right)^2-\left(\U^{*}(s)\right)^2\right]\d s-\mathcal{A}(\U^*)(t),
\end{align}
where $\mathcal{A}(\U^*)(t)$ is defined in \eqref{eq:def_Gamma}.

Taking the $Y^{-\frac{1}{2}}$ norm of the both sides of \eqref{eq:error_function}, we deduce that
\begin{align}\label{eq:Y_norm_inequality}\notag
	\|e\|_{Y^{-\frac{1}{2}}}&\leq \left\|\frac{1}{2}\eta(t)\int_{0}^{t}e^{(s-t)\partial_x^3}\eta(s)\partial_{x}\Big(e(s)\cdot\left(u_{\lambda}^{*}+\U^*\right)(s)\Big)\d s\right\|_{Y^{-\frac{1}{2}}}\\
	&\quad + \left\|\eta(t)e^{-t \partial_x^3}e(0)\right\|_{Y^{-\frac{1}{2}}}
	+\|\mathcal{A}(\U^*)\|_{Y^{-\frac{1}{2}}}.
\end{align}

From Lemmas~\ref{lem:free_solution}, \ref{lem:energy_estimate} and \ref{lem:bilinear_estimate}, we know that
\begin{align}\label{eq:e_0_Y12}
	\left\|\eta(t)e^{-t \partial_x^3}e(0)\right\|_{Y^{-\frac{1}{2}}}\lesssim \|e(0)\|_{H^{-\frac{1}{2}}},
\end{align} 
and
\begin{align}\label{eq:e_times_u_Y12}
	&\left\|\frac{1}{2}\eta(t)\int_{0}^{t}e^{(s-t)\partial_x^3}\eta(s)\partial_{x}\Big(e(s)\cdot\left(u_{\lambda}^{*}+\U^*\right)(s)\Big)\d s\right\|_{Y^{-\frac{1}{2}}}\notag\\
	&\quad\lesssim \left\|\eta(t)\partial_{x}\Big(e(t)\cdot\left(u_{\lambda}^{*}+\U^*\right)(t)\Big)\right\|_{Z^{-\frac{1}{2}}} \lesssim \lambda^{0+}\left(\left\|u^{*}_{\lambda}\right\|_{Y^{-\frac{1}{2}}}+\left\|\U^{*}\right\|_{Y^{-\frac{1}{2}}}\right)\cdot\|e\|_{Y^{-\frac{1}{2}}}.
\end{align}
We shall estimate the remaining term on the right-hand side of \eqref{eq:Y_norm_inequality}, i.e. $\|\mathcal{A}(\U^*)\|_{Y^{-\frac{1}{2}}}$.

\subsection{Consistency error}

The remaining term in \eqref{eq:Y_norm_inequality}, namely $\|\mathcal{A}(\U^*)\|_{Y^{-\frac{1}{2}}}$, can be viewed as the consistency errors of the numerical scheme \eqref{eq:lambda_filtered_LRI} when solving the rescaled KdV equation after applying the rescaling transformation. Recalling the definition of $\mathcal{A}(\U^*)$ in \eqref{eq:def_Gamma}, we note that $\mathcal{A}(\U^*)$ consists of three components: $\mathcal{A}_1(\U^*)$, $\mathcal{A}_2(\U^*)$, and $\mathcal{A}_3(\U^*)$. To estimate these three consistency error terms, we provide the following two lemmas.

\begin{lemma}\label{lem:estimate_A1A2}
	Under the assumptions of Theorem~\ref{thm:existence}, the following estimate holds:
	\begin{align}\label{eq:estimate_A1A2}
		\|\mathcal{A}_1(\U^*)\|_{Y^{-\frac{1}{2}}} + \|\mathcal{A}_2(\U^*)\|_{Y^{-\frac{1}{2}}} \leq C \lambda^{-\frac{3}{2}} \tau N^{3\left( \frac{1}{2} - s \right)},
	\end{align}
	where $C$ is a positive constant independent of $N$, $\tau$, and $\lambda$, but possibly dependent on $\|\phi\|_{H^s}$.
\end{lemma}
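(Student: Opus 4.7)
The plan is to estimate $\mathcal{A}_1(\U^*)$ and $\mathcal{A}_2(\U^*)$ in the Bourgain norm $Y^{-\frac{1}{2}}$ by a three-step pipeline: (i) move the outer Duhamel integral onto the inhomogeneity using the energy estimate of Lemma~\ref{lem:energy_estimate}, (ii) apply the bilinear estimate of Remark~\ref{rem:bilinear_estimate} to decouple the two factors in each integrand, and (iii) exploit the fact that $\F(s,\U^*)$ is piecewise defined on intervals of length $\tau_\lambda$ starting from zero, which should supply one power of $\tau_\lambda$ for $\mathcal{A}_1$ and two powers for $\mathcal{A}_2$. Throughout the argument, the frequency localization $\Pi_{N_\lambda}$ is used to convert every derivative or $L^p$-loss into powers of $N_\lambda$ via the Bernstein inequalities of Lemma~\ref{lem:Bernstein}.

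For $\mathcal{A}_1(\U^*)$, Lemma~\ref{lem:energy_estimate} first gives
$\|\mathcal{A}_1(\U^*)\|_{Y^{-\frac{1}{2}}}\lesssim \|\eta(s)\Pi_{N_\lambda}\partial_x(\Pi_{N_\lambda}\F(s,\U^*)\cdot\Pi_{N_\lambda}\U^*(s))\|_{Z^{-\frac{1}{2}}}$.
Remark~\ref{rem:bilinear_estimate} at $s=-\tfrac{1}{2}$ then bounds this by $\lambda^{0+}\|\Pi_{N_\lambda}\F(\cdot,\U^*)\|_{X_{-1/2,1/2}}\|\Pi_{N_\lambda}\U^*\|_{X_{-1/2,1/2}}$. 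The second factor is controlled by $\|\U^*\|_{X_{s,1/2}}\leq \|\U^*\|_{Y^s}\leq C^{\#}\|\phi_\lambda\|_{H^s}$ using Theorem~\ref{thm:existence} and the trivial embedding $X_{s,1/2}\hookrightarrow X_{-1/2,1/2}$ valid for $s\geq -\tfrac{1}{2}$. For the first factor I would work on each subinterval $[t_n,t_{n+1}]$ via the explicit integral representation
$F^n(s,\U^*(t_n))=\tfrac12\int_{t_n}^{s}e^{(r-s)\partial_x^3}\Pi_{N_\lambda}\partial_x(e^{(t_n-r)\partial_x^3}\Pi_{N_\lambda}\U^*(t_n))^2\,\d r$,
assign $N_\lambda$ to the $\partial_x$, $N_\lambda^{1/2}$ to each of the two $L^4$-factors (Bernstein), and use $\|\Pi_{N_\lambda}\U^*(t_n)\|_{L^2}\lesssim N_\lambda^{-s}\|\phi_\lambda\|_{H^s}$ to obtain the pointwise bound $\|\F(s,\U^*)\|_{L^2}\lesssim \tau_\lambda N_\lambda^{3/2-2s}\|\phi_\lambda\|_{H^s}^2$. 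Summing the $O(1/\tau_\lambda)$ contributions and trading the endpoint modulation weight $\langle\sigma-4\pi^2k^3\rangle^{1/2}$ for an extra $N_\lambda^{1/2}$ via Lemma~\ref{lem:Bernstein} applied to the outer $\Pi_{N_\lambda}$ will provide the required $X_{-1/2,1/2}$ bound.

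The term $\mathcal{A}_2(\U^*)$ is treated identically, with both factors equal to $\Pi_{N_\lambda}\F$; the additional power of $\tau_\lambda$ renders it strictly subdominant under the CFL-type hypothesis $N\lesssim \tau^{-1/(2-2s)}$ already assumed in Lemma~\ref{lem:boundedness_um}. Reverting from the rescaled to the original variables via $\tau_\lambda=\lambda^3\tau$, $N_\lambda=N/\lambda$ and \eqref{eq:relation_Hs}, the powers of $\lambda$ should balance to yield the stated bound $C\lambda^{-\frac{3}{2}}\tau N^{3(1/2-s)}$. The main obstacle, and the reason the bound cannot be derived by a naive Hölder-Bernstein chain, is the control of $\|\Pi_{N_\lambda}\F\|_{X_{-1/2,1/2}}$ at the critical modulation index $b=\tfrac{1}{2}$: the remainder $\F$ is only piecewise smooth in time with jumps at each node $t_n$, and the endpoint $X_{s,1/2}$ norm is rigid under time decomposition and incompatible with a direct $L^2_t$ synthesis. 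I expect this to be resolved either by computing the space-time Fourier transform of the explicit integral form of $F^n$ directly (so that the oscillation $e^{-i r \xi^3}$ generates the required $\langle\sigma-4\pi^2k^3\rangle$-weight), or by giving up a small $\epsilon$ of modulation regularity to work in $X_{-1/2,1/2-\epsilon}$ (which absorbs the jumps and produces a $\tau_\lambda^{\epsilon}$-gain), paying it back through the $\lambda^{0+}$ margin already allowed in the bilinear estimate.
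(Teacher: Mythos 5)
Your step (i) agrees with the paper, but from step (ii) onward you take a route the paper deliberately avoids, and the obstacle you yourself flag at the end is not a technicality — it is fatal to your pipeline. After the energy estimate you apply the bilinear estimate of Remark~\ref{rem:bilinear_estimate} to the product $\Pi_{N_\lambda}\F\cdot\Pi_{N_\lambda}\U^*$, which forces you to bound $\|\Pi_{N_\lambda}\F\|_{X_{-\frac12,\frac12}}$. But $\F(t,\cdot)=F^n(t,\U^*(t_n))$ vanishes at the left endpoint of each interval $[t_n,t_{n+1}]$ and is nonzero at the right endpoint, so $\F$ has a genuine jump at every node; a temporal jump is exactly at the borderline of $H^{1/2}_t$-type regularity, so the endpoint modulation weight $\langle\sigma-4\pi^2k^3\rangle^{1/2}$ is not integrable against it and the $X_{-\frac12,\frac12}$ norm cannot be controlled (each jump contributes a logarithmically divergent $\int\langle\sigma\rangle^{-1}\d\sigma$). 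Neither of your proposed fixes repairs this: the modulation variable $\sigma$ is a \emph{temporal} frequency, so ``trading $\langle\sigma-4\pi^2k^3\rangle^{1/2}$ for $N_\lambda^{1/2}$ via Bernstein applied to the outer $\Pi_{N_\lambda}$'' confuses spatial and temporal frequency localization and is simply not available; and retreating to $X_{-\frac12,\frac12-\epsilon}$ invalidates the only bilinear estimate you have (Lemma~\ref{lem:bilinear_estimate} is stated, and at $s=-\frac12$ is only true, at the endpoint $b=\frac12$), while the $\lambda^{0+}$ in that estimate is a loss, not a reserve you can spend.

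The paper's proof sidesteps Bourgain regularity of $\F$ entirely: after Lemma~\ref{lem:energy_estimate} and Bernstein ($\langle k\rangle^{-1/2}|k|\lesssim N_\lambda^{1/2}$ on $|k|\le N_\lambda$), it invokes the dual Strichartz embedding of Corollary~\ref{corollary:Z0}, $\|\cdot\|_{Z^0}\lesssim\|\cdot\|_{L^{4/3}_tL^{4/3}_x}$, and then H\"older, so that $\F$ only ever needs to be measured in $L^2_{t,x}$ and $L^4_{t,x}$ — Lebesgue norms that are insensitive to the jumps. Even then, your pointwise-in-time bound $\|\F(s)\|_{L^2_x}\lesssim\tau_\lambda N_\lambda^{\frac32-2s}\|\phi_\lambda\|_{H^s}^2$ (the analogue of \eqref{eq:rough_estimate}) is too coarse by a factor $N_\lambda^{\frac12}$ for the main contribution: with $N\sim\tau^{-\frac{1}{2-2s}}$ that extra $N^{1/2}$ turns $\tau N^{3(\frac12-s)}$ into $\tau N^{2-3s}\sim\tau^{\frac{s}{2-2s}}$, which does not tend to zero for $s\le 0$, so the lemma and the main theorem would be lost. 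The paper recovers this half power by writing $e^{(t_n-s)\partial_x^3}\Pi_{N_\lambda}\U^*(t_n)=\Pi_{N_\lambda}\U^*(s)-\mathcal{N}_n(s)$ and estimating the main term $I_1$ in \eqref{eq:F_L2L2} with the global $L^4_{t,x}$ Strichartz bound $\|\Pi_{N_\lambda}\U^*\|_{L^4_{t,x}}\lesssim N_\lambda^{-s}\|\U^*\|_{X_{s,\frac12}}$ rather than a supremum in time, reserving the coarse bound for the correction terms $I_2,I_3$. Both the choice of functional framework for $\F$ and this refinement are missing from your argument.
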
 
\begin{proof}
	Recalling the definitions of $\mathcal{A}1$ and $\mathcal{A}2$ in \eqref{eq:def_Gamma}, and applying Lemma~\ref{lem:energy_estimate}, Lemma~\ref{lem:embedding_theorem}, and Corollary~\ref{corollary:Z0}, we derive the following estimates:
	\begin{align}
		\|\mathcal{A}_1(\U^*)\|_{Y^{-\frac{1}{2}}}&\lesssim \left\|\eta(t)\Pi_{N_{\lambda}}\partial_{x}\Big(\Pi_{N_{\lambda}}\F(t,\U^*)\Pi_{N_{\lambda}}\U^*(t)\Big)\right\|_{Z^{-\frac{1}{2}}}\notag\\
		&\lesssim N_{\lambda}^{\frac{1}{2}}\left\|\eta(t)\Big(\Pi_{N_{\lambda}}\F(t,\U^*)\Pi_{N_{\lambda}}\U^*(t)\Big)\right\|_{Z^0}\notag\\
		&\lesssim N_{\lambda}^{\frac{1}{2}}\left\|\eta(t)\Big(\Pi_{N_{\lambda}}\F(t,\U^*)\Pi_{N_{\lambda}}\U^*(t)\Big)\right\|_{L^{\frac{4}{3}}_tL^{\frac{4}{3}}_x}\notag\\
		&\lesssim N_{\lambda}^{\frac{1}{2}}\left\|\eta(t)\Pi_{N_{\lambda}}\F(t,\U^*)\right\|_{L^2_t L^2_x}\|\Pi_{N_{\lambda}}\U^*(t)\|_{L^4_t L^4_x},
	\end{align}
	and
	\begin{align}
		\|\mathcal{A}_2(\U^*)\|_{Y^{-\frac{1}{2}}}&\lesssim N_{\lambda}^{\frac{1}{2}}\left\|\eta(t)\Pi_{N_{\lambda}}\F(t,\U^*)\right\|_{L^2_t L^2_x}\|\eta(t)\Pi_{N_{\lambda}}\F(t,\U^*)\|_{L^4_t L^4_x}.
	\end{align}
	Next, we estimate $\|\eta(t)\Pi_{N_{\lambda}}\F(t,\U^*)\|_{L^2_t L^2_x}$, $\|\eta(t)\Pi_{N_{\lambda}}\F(t,\U^*)\|_{L^4_t L^4_x}$, and $\|\Pi_{N_{\lambda}} \U^*(t)\|_{L^4_t L^4_x}$ in the same way as in the proof of Lemma~\ref{lem:boundedness_um}. This leads to 
	\begin{align}\label{eq:A_1_Y12}
		\|\mathcal{A}_1(\U^*)\|_{Y^{-\frac{1}{2}}}
		&\lesssim \tau_{\lambda}N_{\lambda}^{\frac{3}{2}-3s}\left\|\U^{*}\right\|^3_{Y^s}+\tau_{\lambda}^\frac{4}{3} N_{\lambda}^{2-3s}\|\phi_{\lambda}\|_{H^s}\|\U^{*}\|^2_{Y^s}\notag\\
		&\quad+\tau_{\lambda}^\frac{5}{3} N_{\lambda}^{2-3s}\|\phi_{\lambda}\|_{H^s}^2\left\|\U^{*}\right\|_{Y^s}+\sum_{j=2}^{7}\tau_{\lambda}^{j}N_{\lambda}^{[\frac{1}{2}+\frac{3j}{2}-(j+2)s]}\|\U^{*}\|^{j+2}_{Y^s}\notag\\
		&=:\mathcal{C}^*\left(s,\tau_{\lambda},N_{\lambda},\left\|\U^{*}\right\|_{Y^s}\right).\\
		\label{eq:A_2_Y12}
		\|\mathcal{A}_2(\U^*)\|_{Y^{-\frac{1}{2}}}&\lesssim\tau_{\lambda}N_{\lambda}^{\frac{7}{4}-s}\left\|\U^{*}\right\|_{Y^s}\cdot\mathcal{C}^*\left(s,\tau_{\lambda},N_{\lambda},\left\|\U^{*}\right\|_{Y^s}\right).
	\end{align}
	Substituting $\|\U^{*}\|_{Y^s}\leq C^{\#}\|\phi_{\lambda}\|_{H^s}$, $\|\phi_{\lambda}\|_{H^s}\leq\lambda^{-\frac{3}{2}-s}\|\phi\|_{H^s}$, $\tau_{\lambda}=\lambda^3\tau$ and $N_{\lambda}=\lambda^{-1}N$, we get the following estimates:
	\begin{align}\label{eq:estimate_constant}
		\tau_{\lambda}N_{\lambda}^{\frac{7}{4}-s}\left\|\U^{*}\right\|_{Y^s}&\leq \left[C^{\#}\lambda^{-\frac{1}{4}}\tau N^{\frac{7}{4}-s}\right]\cdot\|\phi\|_{H^s}\lesssim 1,
	\end{align}
	and
	\begin{align}\label{eq:estimate_C*}
		&\mathcal{C}^*\left(s,\tau_{\lambda},N_{\lambda},\left\|\U^{*}\right\|_{Y^s}\right)\notag\\		
		&\leq\lambda^{-\frac{3}{2}}\tau N^{3(\frac{1}{2}-s)}\cdot\Big((C^{\#})^3\lambda^{-\frac{3}{2}}\|\phi\|_{H^s}^3+(C^{\#})^2\lambda^{-1}\tau^{\frac{1}{3}}N^{\frac{1}{2}}\|\phi\|_{H^s}^3\notag\\
		&\qquad+C^{\#}\tau^{\frac{2}{3}}N^{\frac{1}{2}}\|\phi\|_{H^s}^3+\sum_{j=2}^{7}(C^{\#})^{j+2}\lambda^{-2}\tau^{j-1}N^{[\frac{1}{2}+\frac{3(j-1)}{2}-(j-1)s]}\|\phi\|_{H^s}^{j+2}\Big)\notag\\
		&\lesssim \lambda^{-\frac{3}{2}}\tau N^{3(\frac{1}{2}-s)},
	\end{align}
	which holds for $\lambda\geq 1$ and $N\lesssim\tau^{-\frac{1}{2-2s}}$ with $s\in(-\frac{1}{2},0]$.
	In above, we have used the inequality
	\begin{align*}
		j-1\geq \frac{1}{2-2s}\cdot\left[\frac{1}{2}+\frac{3(j-1)}{2}-(j-1)s\right],
	\end{align*}
	which  is equivalent to $(\frac{1}{2}-s)(j-1)\geq \frac{1}{2}$ that holds  for $j\geq 2$ and $s\leq 0$. Combining \eqref{eq:A_1_Y12}--\eqref{eq:estimate_C*}, we complete the proof. 
\end{proof}

\begin{lemma}\label{lem:estimate_R1}
	Let $u^{*}_{\lambda}$ be the unique solution of \eqref{eq:local_well_posedness} in $Y^s$ for $s\in(-\frac{1}{2},0]$, and let $\U^*$ be the fixed point of the operator $\Gamma$ under the same assumptions as in Theorem~\ref{thm:existence}. Then, the following result holds:
	\begin{align}\label{eq:estimate_R1}
		\|\mathcal{A}_3(\U^*)\|_{Y^{-\frac{1}{2}}}\leq  C\left[\lambda^{-\frac{5}{2}-s+}N^{-\frac{1}{2}-s}+\lambda^{0+}\left(\left\|u^{*}_{\lambda}\right\|_{Y^{-\frac{1}{2}}}+\left\|\U^{*}\right\|_{Y^{-\frac{1}{2}}}\right)\cdot\|e\|_{Y^{-\frac{1}{2}}}\right],
	\end{align}
	where $C$ is a positive constant independent of $N$, $\tau$ and $\lambda$, but possibly dependent on $\|\phi\|_{H^s}$.
\end{lemma}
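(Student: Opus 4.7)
The strategy is to insert the decomposition $\U^{*}=u_{\lambda}^{*}-e$ into the integrand of $\mathcal{A}_{3}(\U^{*})$ and split it into three algebraically clean pieces. Since $(\U^{*})^{2}=(u_{\lambda}^{*})^{2}-2u_{\lambda}^{*}e+e^{2}$ and $(\Pi_{N_{\lambda}}\U^{*})^{2}=(\Pi_{N_{\lambda}}u_{\lambda}^{*})^{2}-2\Pi_{N_{\lambda}}u_{\lambda}^{*}\Pi_{N_{\lambda}}e+(\Pi_{N_{\lambda}}e)^{2}$, one rewrites
\begin{align*}
\Pi_{N_{\lambda}}(\Pi_{N_{\lambda}}\U^{*})^{2}-(\U^{*})^{2}
&=\bigl[\Pi_{N_{\lambda}}(\Pi_{N_{\lambda}}u_{\lambda}^{*})^{2}-(u_{\lambda}^{*})^{2}\bigr]\\
&\quad-2\bigl[\Pi_{N_{\lambda}}(\Pi_{N_{\lambda}}u_{\lambda}^{*}\,\Pi_{N_{\lambda}}e)-u_{\lambda}^{*}e\bigr]
+\bigl[\Pi_{N_{\lambda}}(\Pi_{N_{\lambda}}e)^{2}-e^{2}\bigr].
\end{align*}
After applying Lemma~\ref{lem:energy_estimate} and Lemma~\ref{lem:time_localization} to reduce $\|\mathcal{A}_{3}(\U^{*})\|_{Y^{-\frac{1}{2}}}$ to the $Z^{-\frac{1}{2}}$-norm of $\eta(t)\partial_{x}$ acting on the above sum, it remains to estimate the three brackets separately.

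For the pure-$u_{\lambda}^{*}$ bracket I would invoke the elementary identity
\begin{align*}
\Pi_{N_{\lambda}}(\Pi_{N_{\lambda}}u_{\lambda}^{*})^{2}-(u_{\lambda}^{*})^{2}
=-\Pi_{N_{\lambda}}\bigl[\Pi_{>N_{\lambda}}u_{\lambda}^{*}\cdot(\Pi_{N_{\lambda}}u_{\lambda}^{*}+u_{\lambda}^{*})\bigr]-\Pi_{>N_{\lambda}}(u_{\lambda}^{*})^{2},
\end{align*}
so that every subterm exhibits a high-frequency projection $\Pi_{>N_{\lambda}}$. Combining Lemma~\ref{lem:bilinear_estimate} and Remark~\ref{rem:bilinear_estimate} with the Bernstein-type inequality \eqref{eq:Bernstein_3} at the regularity pair $(-\tfrac{1}{2},s)$ then yields a gain of $N_{\lambda}^{-\frac{1}{2}-s}$ multiplied by $\|u_{\lambda}^{*}\|_{Y^{s}}^{2}$. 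Substituting the uniform bound $\|u_{\lambda}^{*}\|_{Y^{s}}\leq C^{\#}\|\phi_{\lambda}\|_{H^{s}}$ established for \eqref{eq:local_well_posedness}, the rescaling relation $\|\phi_{\lambda}\|_{H^{s}}\lesssim\lambda^{-\frac{3}{2}-s}\|\phi\|_{H^{s}}$ from \eqref{eq:relation_Hs}, and $N_{\lambda}=\lambda^{-1}N$, the accumulated exponents $\lambda^{0+}\cdot\lambda^{\frac{1}{2}+s}\cdot\lambda^{-3-2s}=\lambda^{-\frac{5}{2}-s+}$ reproduce precisely the first term of \eqref{eq:estimate_R1}.

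For the cross bracket and the quadratic-in-$e$ bracket, no Bernstein gain is sought; instead Lemma~\ref{lem:bilinear_estimate} is applied directly at regularity $-\frac{1}{2}$, using the boundedness of $\Pi_{N_{\lambda}}$ on $Y^{-\frac{1}{2}}$ (from Lemma~\ref{lem:Bernstein}) and its commutation with $\partial_{x}$ to absorb the inner projections. This produces bounds of the form $\lambda^{0+}\|u_{\lambda}^{*}\|_{Y^{-\frac{1}{2}}}\|e\|_{Y^{-\frac{1}{2}}}$ and $\lambda^{0+}\|e\|_{Y^{-\frac{1}{2}}}^{2}$, respectively. The quadratic-in-$e$ factor is then controlled by the triangle inequality $\|e\|_{Y^{-\frac{1}{2}}}\leq\|u_{\lambda}^{*}\|_{Y^{-\frac{1}{2}}}+\|\U^{*}\|_{Y^{-\frac{1}{2}}}$, matching the second term of \eqref{eq:estimate_R1}. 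The hard part will be the bookkeeping of $\lambda$-exponents (from \eqref{eq:relation_Hs}, the Bernstein factor $N_{\lambda}^{-\frac{1}{2}-s}=\lambda^{\frac{1}{2}+s}N^{-\frac{1}{2}-s}$, and the $\lambda^{0+}$ in the bilinear estimate) so that the final gain is exactly $\lambda^{-\frac{5}{2}-s+}$, together with verifying the zero-mean hypothesis required by Lemma~\ref{lem:bilinear_estimate}, which holds because $\int_{\T}\phi\,\d x=0$ and this mean-zero property is preserved by both the $\lambda$-periodic KdV flow and the numerical scheme \eqref{eq:lambda_filtered_LRI}, hence by its continuation $\U$.
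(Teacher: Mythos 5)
Your proposal is correct and follows essentially the same route as the paper: the paper telescopes $\mathcal{A}_3(\U^*)$ into $\mathcal{B}_1+\mathcal{B}_2+\mathcal{B}_3$, where $\mathcal{B}_2=\Pi_{N_\lambda}(\Pi_{N_\lambda}u^*_\lambda)^2-(u^*_\lambda)^2$ is split by exactly your high-frequency identity and estimated via Bernstein at the pair $(-\tfrac12,s)$ plus the bilinear estimate to get $\lambda^{0+}N_\lambda^{-\frac12-s}\|u^*_\lambda\|_{Y^s}^2$, while $\mathcal{B}_1+\mathcal{B}_3$ (which equals your cross and quadratic-in-$e$ brackets after expanding $\U^*=u^*_\lambda-e$) is bounded by $\lambda^{0+}(\|u^*_\lambda\|_{Y^{-1/2}}+\|\U^*\|_{Y^{-1/2}})\|e\|_{Y^{-1/2}}$. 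Your $\lambda$-exponent bookkeeping ($\lambda^{0+}\cdot\lambda^{\frac12+s}\cdot\lambda^{-3-2s}=\lambda^{-\frac52-s+}$) matches the paper's computation.
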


\begin{proof}
	We decompose $\mathcal{A}_3(\U^*)$ as follows
	\begin{align}\label{eq:A_3}
		\mathcal{A}_3(\U^*)=&\frac{1}{2}\eta(t)\int_{0}^{t}e^{(s-t)\partial_x^3}\eta(s)\partial_{x}\left[\Pi_{N_{\lambda}}\left(\Pi_{N_{\lambda}}\U^*(s)\right)^2-\Pi_{N_{\lambda}}\left(\Pi_{N_{\lambda}}u^*_{\lambda}(s)\right)^2\right]\d s\notag\\
		&+\frac{1}{2}\eta(t)\int_{0}^{t}e^{(s-t)\partial_x^3}\eta(s)\partial_{x}\left[\Pi_{N_{\lambda}}\left(\Pi_{N_{\lambda}}u^*_{\lambda}(s)\right)^2-\left(u^*_{\lambda}(s)\right)^2\right]\d s\notag\\
		&+\frac{1}{2}\eta(t)\int_{0}^{t}e^{(s-t)\partial_x^3}\eta(s)\partial_{x}\left[\left(u^*_{\lambda}(s)\right)^2-\left(\U^*(s)\right)^2\right]\d s\notag\\
		=&:\mathcal{B}_1(\U^*,u^*_{\lambda})+\mathcal{B}_2(u^*_{\lambda})+\mathcal{B}_3(\U^*,u^*_{\lambda}).
	\end{align}
	Using the same arguments as in \eqref{eq:e_times_u_Y12}, and noting the boundedness of the projection operator, we obtain:
	\begin{align}\label{eq:estimate_B1B3}
		\left\|\mathcal{B}_1(\U^*,u^*_{\lambda})\right\|_{Y^{-\frac{1}{2}}}+\left\|\mathcal{B}_3(\U^*,u^*_{\lambda})\right\|_{Y^{-\frac{1}{2}}}\lesssim \lambda^{0+}\left(\left\|u^{*}_{\lambda}\right\|_{Y^{-\frac{1}{2}}}+\left\|\U^{*}\right\|_{Y^{-\frac{1}{2}}}\right)\cdot\|e\|_{Y^{-\frac{1}{2}}}.
	\end{align}
	Next, we further split $\mathcal{B}_2(u^*_{\lambda})$ as follows
	\begin{align}\label{eq:R_1}
		\mathcal{B}_2(u^*_{\lambda})=&-\frac{1}{2}\eta(t)\int_{0}^{t}e^{(s-t)\partial_x^3}\eta(s)\Pi_{>N_{\lambda}}\partial_{x}\left(u_{\lambda}^{*}(s)\right)^2\d s\notag\\
		&-\frac{1}{2}\eta(t)\int_{0}^{t}e^{(s-t)\partial_x^3}\eta(s)\Pi_{N_{\lambda}}\partial_{x}\Big[\Pi_{>N_{\lambda}}u^*_{\lambda}(s)\cdot\left(u^*_{\lambda}(s)+\Pi_{N_{\lambda}}u^*_{\lambda}(s)\right)\Big]\d s \notag\\
		=&:\mathcal{B}_{21}(u^*_{\lambda})+\mathcal{B}_{22}(u^*_{\lambda}),
	\end{align}
	where we denote $\Pi_{>N_{\lambda}}=I-\Pi_{N_{\lambda}}$. By using Lemmas~\ref{lem:energy_estimate}, \ref{lem:bilinear_estimate}, \ref{lem:Bernstein} and Remark~\ref{rem:bilinear_estimate}, we get
	\begin{align}\label{eq:R_11}
		\left\|\mathcal{B}_{21}(u^*_{\lambda})\right\|_{Y^{-\frac{1}{2}}}&\lesssim 	\left\|\eta(t)\Pi_{>N_{\lambda}}\partial_{x}\left(u_{\lambda}^{*}(t)\right)^2\right\|_{Z^{-\frac{1}{2}}}\notag\\
		&\lesssim N_{\lambda}^{-\frac{1}{2}-s}\left\|\eta(t)\partial_{x}\left(u_{\lambda}^{*}(t)\right)^2\right\|_{Z^{s}}\lesssim \lambda^{0+}N_{\lambda}^{-\frac{1}{2}-s}\|{u^*_{\lambda}}\|_{Y^s}^2,
	\end{align}
	and
	\begin{align}\label{eq:R_12}
		\left\|\mathcal{B}_{22}(u^*_{\lambda})\right\|_{Y^{-\frac{1}{2}}}&\lesssim 	\left\|\eta(t)\Pi_{N_{\lambda}}\partial_{x}\Big[\Pi_{>N_{\lambda}}u^*_{\lambda}(t)\cdot\left(u^*_{\lambda}(t)+\Pi_{N_{\lambda}}u^*_{\lambda}(t)\right)\Big]\right\|_{Z^{-\frac{1}{2}}}\notag\\
		&\lesssim \lambda^{0+}\left\|\Pi_{>N_{\lambda}}u^*_{\lambda}(t)\right\|_{Y^{-\frac{1}{2}}}\cdot \left(\|{u^*_{\lambda}}\|_{Y^{-\frac{1}{2}}}+\left\|\Pi_{N_{\lambda}}{u^*_{\lambda}}\right\|_{Y^{-\frac{1}{2}}}\right)\notag\\
		&\lesssim \lambda^{0+}N_{\lambda}^{-\frac{1}{2}-s}\|{u^*_{\lambda}}\|_{Y^s}^2.
	\end{align}
	Finally,  substituting \eqref{eq:estimate_B1B3}, \eqref{eq:R_11} and \eqref{eq:R_12} into \eqref{eq:A_3} and noting that $\|{u^*_{\lambda}}\|_{Y^s}\leq C^{\#}\|\phi_{\lambda}\|_{H^s}\leq C^{\#}\lambda^{-\frac{3}{2}-s}\|\phi\|_{H^s}$, we obtain the desired estimate \eqref{eq:estimate_R1}.
\end{proof}

\subsection{The proof of Theorem~\ref{thm:main} for short time}\label{subsec:proof_small_T}
From the discussions in Remark~\ref{lem:relation_Gamma}, equation \eqref{eq:u-u*} and the rescaling relationship \eqref{eq:relation_scaling}, we deduce that
\begin{align}\label{eq:final_ineq_1}
	\max_{\sigma_n \leq \lambda^{-3}} \|u(\sigma_n) - u^n\|_{H^{-\frac{1}{2}}}
	&\leq \lambda^{\frac{3}{2}} \max_{t_n \leq 1} \|u_{\lambda}(t_n) - u_{\lambda}^n\|_{H^{-\frac{1}{2}}}\notag\\
	&\leq \lambda^{\frac{3}{2}}\|u_{\lambda}-\U\|_{L^{\infty}(0,1;H^{-\frac{1}{2}})}\lesssim \lambda^{\frac{3}{2}}\|e\|_{Y^{-\frac{1}{2}}},
\end{align}
where the error function $e=u^*_\lambda-\U^*$ is defined in \eqref{eq:def_error}. The scaling parameter $\lambda\geq1$ satisfies the conditions in Theorem~\ref{thm:existence} and \eqref{eq:condition_lambda} below. Importantly, $\lambda$ is independent of $N$ and $\tau$ but possibly depends on $\|\phi\|_{H^s}$. Consequently, if $T< \lambda^{-3}$ the proof of the error estimate in \eqref{eq:main_thm}  reduces to estimating $\|e\|_{Y^{-\frac{1}{2}}}$.

By combining \eqref{eq:Y_norm_inequality}, \eqref{eq:e_0_Y12}, \eqref{eq:e_times_u_Y12}, and the estimates from Lemmas~\ref{lem:estimate_R1} and \ref{lem:estimate_A1A2}, we obtain the following inequality:
\begin{align}\label{eq:e_estimate}
	\|e\|_{Y^{-\frac{1}{2}}}\leq& C\lambda^{0+}\left(\left\|u^{*}_{\lambda}\right\|_{Y^{-\frac{1}{2}}}+\left\|\U^{*}\right\|_{Y^{-\frac{1}{2}}}\right)\cdot\|e\|_{Y^{-\frac{1}{2}}}+C \lambda^{-\frac{5}{2}-s+} N^{-\frac{1}{2}-s}+C\lambda^{-\frac{3}{2}}\tau N^{3(\frac{1}{2}-s)},
\end{align}
where $C$ is a positive constant independent of $N$, $\tau$ and $\lambda$ but possibly dependent on $\|\phi\|_{H^s}$. Since $$\left\|u^{*}_{\lambda}\right\|_{Y^{-\frac{1}{2}}}+\left\|\U^{*}\right\|_{Y^{-\frac{1}{2}}}\leq2C^{\#}\|\phi_{\lambda}\|_{H^s}\leq 2C^{\#}\lambda^{-\frac{3}{2}-s}\|\phi\|_{H^s},$$
we can choose a sufficient large $\lambda$ such that 
\begin{align}\label{eq:condition_lambda}
	C\lambda^{0+}\left(\left\|u^{*}_{\lambda}\right\|_{Y^{-\frac{1}{2}}}+\left\|\U^{*}\right\|_{Y^{-\frac{1}{2}}}\right)\leq 2CC^{\#}\lambda^{-\frac{3}{2}-s+}\|\phi\|_{H^s}\leq \frac{1}{2}.
\end{align}
Thus, we obtain the estimate
\begin{align}\label{estimate_e}
	\|e\|_{Y^{-\frac{1}{2}}}\lesssim& \lambda^{-\frac{5}{2}-s+} N^{-\frac{1}{2}-s}+\lambda^{-\frac{3}{2}}\tau N^{3(\frac{1}{2}-s)}.
\end{align} 
By combining  \eqref{eq:final_ineq_1} and \eqref{estimate_e},  taking $N=O(\tau^{-\frac{1}{2-2s}+})$, and noting that $\lambda^{-1-s+}\leq 1$ for $\lambda\geq1$ and $s\in(-\frac{1}{2},0]$,
we obtain
\begin{align}\label{eq:iterate_ineq}
	&\sup_{0\leq n\leq T/\tau}\|u(\sigma_n)-u^n\|_{H^{-\frac{1}{2}}}\lesssim  \lambda^{-1-s+}N^{-\frac{1}{2}-s}+\tau N^{3(\frac{1}{2}-s)}\lesssim 
	\tau^{\frac{1+2s}{4-4s}-}.
\end{align}
This completes the proof of Theorem~\ref{thm:main} for the problem \eqref{eq:KdV_equation} within short time interval, i.e. $T<\lambda^{-3}$ for $\lambda$ satisfying the conditions in Theorem~\ref{thm:existence} and \eqref{eq:condition_lambda}.

\subsection{The proof of Theorem~\ref{thm:main} for large time}

For the case $T \ge \lambda^{-3}$, since it has already been established in Subsection~\ref{subsec:proof_small_T} that the conclusion of Theorem~\ref{thm:main} holds for $0\leq n \leq M_0$ where $M_0:=\lfloor\frac{\lambda^{-3}}{\tau}\rfloor$, we can proceed from $u(\sigma_{M_0})$ and $u^{M_0}$ as initial data and then repeatedly apply similar arguments to prove the error estimates for $n > M_0$. Specifically, we first consider the following nonlinear operators:
\begin{align}\label{eq:def_Gamma_1}
	\Gamma_1(w)(t) = & \, \eta(t)e^{-t \partial_x^3}u^{M_0}_{\lambda} + \frac{1}{2}\eta(t)\int_{0}^{t}e^{(s-t)\partial_x^3}\eta(s)\partial_{x}\left(w(s)\right)^2 \, \mathrm{d}s + \mathcal{A}(w)(t),
\end{align}
and
\begin{align}\label{eq:def_tilde_Gamma_1}
	\widetilde{\Gamma}_1(w)(t)=\eta(t)e^{-t \partial_x^3}u_{\lambda}(t_{M_0})+\frac{1}{2}\eta(t)\int_{0}^{t}e^{(s-t)\partial_x^3}\eta(s)\partial_{x}\left(w(s)\right)^2\d s,
\end{align}
where $\mathcal{A}$ in \eqref{eq:def_Gamma_1} is defined as in \eqref{eq:def_Gamma}.  And then convert the estimate $\|u(\sigma_n)-u^n\|_{H^{-\frac{1}{2}}}$ for $M_0\leq n\leq 2M_0$ into the Bourgain norm estimate between the two fix points of \eqref{eq:def_Gamma_1} and \eqref{eq:def_tilde_Gamma_1}, as the discussions in \eqref{eq:final_ineq_1}.

It is important to observe from Theorem~\ref{thm:existence} and \eqref{eq:condition_lambda} that the choice of the scaling parameter $\lambda$ depends on the $H^s$ norm of the initial data, specifically $\|u^{M_0}_{\lambda}\|_{H^s}$. This implies that for large $T$, ensuring the stability of the numerical solution in the $H^s$ space becomes crucial. However, directly applying the results from Theorem~\ref{thm:existence} is quite challenging. The difficulty arises because Theorem~\ref{thm:existence} only provides the following boundedness estimate:  
\begin{align}\label{eq:boundedness_estimate}  
	\|u^{M_0}_{\lambda}\|_{H^s} \leq C^{\#}\|\phi_{\lambda}\|_{H^s},  
\end{align}  
for some constant $C^{\#} > 1$. When $T$ becomes significantly larger compared to $\lambda^{-3}$, it is necessary to partition the entire time interval into smaller segments and iteratively apply short-time error estimates within each segment.  
Relying only on the boundedness estimate \eqref{eq:boundedness_estimate} leads to exponential growth in the $H^s$-norm estimate of the numerical solution. Consequently, $\lambda$ must be chosen sufficiently large, which in turn causes the length of the time segments, $\lambda^{-3}$, to become extremely small. This makes it challenging to extend the convergence results in Subsection~\ref{subsec:proof_small_T} to longer time intervals.

One approach to addressing this difficulty is to incorporate the error estimate and Bernstein's inequality by choosing  
$N \sim \tau^{-\frac{1}{2-2s}+\varepsilon}$  
for some small $\varepsilon > 0$ with $\varepsilon \ll 1$. This choice introduces an additional small term in the error estimate, allowing us to establish a uniform bound on the numerical solution in $H^{\gamma}$ for a fixed $\gamma \in (-1/2, s)$ as $\tau$ tends to zero (see, for example, \eqref{eq:error_H_gamma_2} below). 

From Theorem~\ref{thm:existence}, we observe that the nonlinear operator \eqref{eq:def_Gamma_1} admits a unique fixed point in the Bourgain space $Y^{\gamma}$ under the choice of the scaling parameter $\lambda$, which depends only on the $H^{\gamma}$ norm of the numerical solution. Consequently, for the entire time interval $[0,T]$, we can select a uniform time segment length,  
$\lambda^{-3}$,  
which depends solely on $\|u\|_{L^\infty(0,T; H^s)}$. By iteratively applying the short-time error estimate from Subsection~\ref{subsec:proof_small_T}, we obtain a convergence result for the long-time interval.  

Meanwhile, from Lemma~\ref{lem:estimate_A1A2}, we note that part of the consistency error estimate of our numerical method depends on the regularity of the initial value for the nonlinear operator \eqref{eq:def_Gamma_1}, i.e., the regularity of the numerical solution. Therefore, to ensure the validity of the error estimate, we choose $\gamma < s$ such that  
$|s - \gamma|$  
is sufficiently small and satisfies:

\begin{align}\label{eq:gamma_condition_2}
	\frac{3}{2-2s}(\gamma-s)+(2-2\gamma)\varepsilon>0,\quad \text{for}\quad N=O(\tau^{-\frac{1}{2-2s}+\varepsilon}), \quad \text{and}\quad s\in(-\frac{1}{2},0],
\end{align}
where $\varepsilon$ is fixed for all time-steps. And we will prove that there exist a $\tau_0$ sufficiently small such that if $\tau<\tau_0$ there holds
\begin{subequations}\label{reformulated_problem}
	\begin{align}
		\|u(\sigma_n)-u^n\|_{H^{-\frac{1}{2}}}&\leq C \tau^{\frac{1+2s}{4-4s}-\frac{1+2s}{2}\varepsilon},\label{reformulate_1}\\
		\|u^n\|_{H^{\gamma}}&\leq \|u\|_{L^{\infty}(0,T;H^{s})}+1,\label{reformulate_2}
	\end{align}
\end{subequations}
for all $0\leq n\leq T/\tau$, where $C$ is a positive constant independent of $\tau$ and $N$ but possibly depends on $T$ and $\|u\|_{L^{\infty}(0,T;H^{s})}$.

Firstly, we infer from \eqref{eq:iterate_ineq} that
\begin{align}\label{eq:reformulated_estimate}
	\sup_{0\leq n\leq M_0}\|u(\sigma_n)-u^n\|_{H^{-\frac{1}{2}}}&\lesssim  \lambda^{-1-s+}N^{-\frac{1}{2}-s}+\tau N^{3(\frac{1}{2}-s)}\notag\\
	&\lesssim \lambda^{-1-s+}\tau^{\frac{1+2s}{4-4s}-\frac{1+2s}{2}\varepsilon}+\tau ^{1+3(\frac{1}{2}-s)(-\frac{1}{2-2s}+\varepsilon)}\notag\\
	&=\tau^{\frac{1+2s}{4-4s}-\frac{1+2s}{2}\varepsilon}\left(\lambda^{-1-s+}+\tau^{(2-2s)\varepsilon}\right)\lesssim \tau^{\frac{1+2s}{4-4s}-\frac{1+2s}{2}\varepsilon}.
\end{align}
Thus, we have proved  \eqref{reformulate_1} for $0\leq n\leq M_0$.

Next, from the definition of the filtered low-regularity integrator \eqref{eq:filtered_LRI}, we have
\begin{align*}
	\Pi_{>N}u^{n}=\Pi_{>N}\left(e^{-n \tau\partial_x^3}\phi\right),
\end{align*}
for all  $0\leq n\leq T/\tau$.
Thus, using \eqref{eq:reformulated_estimate} and the Bernstein's inequality, we obtain
\begin{align}\label{eq:H_gamma_estimate}
	\|u^{n}-u(\sigma_{n})\|_{H^\gamma}&\leq 	\left\|\Pi_{N}\left(u^{n}-u(\sigma_{n})\right)\right\|_{H^\gamma}+\|\Pi_{>N}u^{n}\|_{H^\gamma}+\|\Pi_{>N}u(\sigma_{n})\|_{H^{\gamma}}\notag\\
	&\lesssim \tau^{\frac{1+2s}{4-4s}-\frac{1+2s}{2}\varepsilon}N^{\gamma+\frac{1}{2}}
	+N^{\gamma-s}\left(\|\phi\|_{H^s}+\|u\|_{L^\infty(0,T; H^s)}\right),
\end{align}
for all $0\leq n\leq M_0$.
Through direct calculation, we deduce the following result
\begin{align}\label{eq:error_H_gamma_2}
	\|u^{n}-u(\sigma_{n})\|_{H^\gamma}\lesssim\tau^{(s-\gamma)(\frac{1}{2-2s}-\varepsilon)}.
\end{align}
Since $\gamma<s$,
the exponent of $\tau$ in \eqref{eq:error_H_gamma_2} is strictly positive.
There exists a constant $\tau_0>0$ such that, if $\tau<\tau_0$, then
\begin{align*}
	\|u^{n}-u(\sigma_{n})\|_{H^\gamma}\leq 1.
\end{align*}
Therefore, we have
\begin{align}\label{eq:un_H_gamma}
	\|u^{n}\|_{H^\gamma}\leq \|u(\sigma_{n})\|_{H^{\gamma}}+1\leq \|u\|_{L^{\infty}(0,T;H^{s})}+1,
\end{align}
which is the uniform estimate of the numerical solution in $H^{\gamma}$, i.e. \eqref{reformulate_2} for all $0\leq n\leq M_0$.

Now, we starting from $u(\sigma_{M_0})$ and $u^{M_0}$ as initial data, from Theorem~\ref{thm:existence}, we have, there exists a positive constant $\lambda_0$, which is independent of $N$ and $\tau$ but may dependent on $\|u^{M_0}\|_{H^\gamma}$, such that for any $\lambda>\lambda_0$ the equation $\Gamma_1(\U^{*,1})=\U^{*,1}$ admits a solution $\U^{*,1}\in Y^{\gamma}$ satisfying
\begin{align}
	\|\U^{*,1}\|_{Y^{\gamma}}\leq C^{\#}\|u^{M_0}_{\lambda}\|_{H^\gamma}\leq C^{\#} \lambda^{-\frac{3}{2}-\gamma}\|u^{M_0}\|_{H^\gamma}.
\end{align}
Moreover, we note from the definition of $\Gamma_1$ in \eqref{eq:def_Gamma_1} that for $t\in[0,1]$ there holds
\begin{align}
	\U^{*,1}(t)\equiv \U(t_{M_0}+t).
\end{align}

\begin{remark}
	\upshape
	Since $\lambda_0$ depends only on $\|u^{M_0}\|_{H^\gamma}$ which is bounded by the fixed constant $\|u\|_{L^{\infty}(0,T;H^{s})}+1$ and is independent of $\tau$ and the numerical solution $u^n$,  we can, without loss of generality, assume that $\lambda_0$ here is the same as previously defined. 
	This ensures consistent use of the same scaling constant $\lambda$ throughout the analysis.
	Meanwhile, we note that, the conditions $\gamma<s$ and \eqref{eq:gamma_condition_2} are sufficient to guarantee the condition
	$N\lesssim \tau^{-\frac{1}{2-2\gamma}}$, which is necessary for maintaining the stability of the numerical scheme with $H^{\gamma}$ initial data (see Theorem~\ref{thm:existence}).
	To see this, note that for $N=O(\tau^{-\frac{1}{2-2s}+\varepsilon})$, the condition above is equivalent to
	\begin{align*}
		-\frac{1}{2-2\gamma}\leq -\frac{1}{2-2s}+\varepsilon,
	\end{align*}
	which simplifies to $	\frac{1}{2}(s-\gamma)+\frac{3}{2-2s}(\gamma-s)+(2-2\gamma)\varepsilon\geq 0.$
	This inequality follows directly from the conditions $\gamma<s$ and \eqref{eq:gamma_condition_2}.
\end{remark}

We note from \cite[Proposition~6]{Colliander} that, since $u_{\lambda}(t_{M_0})\in H^s$, the equation 
\begin{align}
	\widetilde{\Gamma}(u^{*,1}_{\lambda})=u^{*,1}_{\lambda},
\end{align}
admits a unique solution in the $Y^s$ space, satisfying
$$\|u_{\lambda}^{*,1}\|_{Y^s}\leq C^{\#}\|u_{\lambda}(t_{M_0})\|_{H^s},$$
and
\begin{align}\label{eq:gamma_1_relation_u*_u}
	u_{\lambda}^*(t)=u_{\lambda}(t_{M_0}+t),\quad \text{for}\quad t \in [0, 1].
\end{align}
Thus,  the error between $\U(t)$ and $u_{\lambda}(t)$ for $t\in [M_0\tau_{\lambda},M_0\tau_{\lambda}+1]$ can be estimated by analyzing $u^{*,1}_{\lambda}-\U^{*,1}$ in the Bourgain space $Y^{-\frac{1}{2}}$. Denoting
\begin{align*}
	e_1(t,x):=u^{*,1}_{\lambda}-\U^{*,1},
\end{align*}
and following the same arguments as in \eqref{eq:Y_norm_inequality}--\eqref{eq:e_times_u_Y12}, we deduce
\begin{align}\label{eq:gamma_Y_norm_inequality}
	\|e_1\|_{Y^{-\frac{1}{2}}}\lesssim&\left\|e_1(0)\right\|_{H^{-\frac{1}{2}}}+\lambda^{0+}\left(\left\|u^{*,1}_{\lambda}\right\|_{Y^{-\frac{1}{2}}}+\left\|\U^{*,1}\right\|_{Y^{-\frac{1}{2}}}\right)\cdot\|e_1\|_{Y^{-\frac{1}{2}}}\notag\\
	&\quad +\|\mathcal{A}_1(\U^{*,1})\|_{Y^{-\frac{1}{2}}}+\|\mathcal{A}_2(\U^{*,1})\|_{Y^{-\frac{1}{2}}}+\|\mathcal{A}_3(\U^{*,1})\|_{Y^{-\frac{1}{2}}}.
\end{align}
Since 	$\|u^{M_0}\|_{H^\gamma}\leq \|u\|_{L^{\infty}(0,T;H^{s})}+1$, it follows from Lemma~\ref{lem:estimate_A1A2} that
\begin{align}\label{eq:gamma_estimate_A1A2}
	\|\mathcal{A}_1(\U^{*,1})\|_{Y^{-\frac{1}{2}}}+\|\mathcal{A}_2(\U^{*,1})\|_{Y^{-\frac{1}{2}}}\leq C\lambda^{-\frac{3}{2}}\tau N^{3(\frac{1}{2}-\gamma)},
\end{align}
where $C$ is a positive constant independent of $N$, $\tau$ and $\lambda$ but possibly dependent on $	\|u^{M_0}_{\lambda}\|_{H^\gamma}$.

From Lemma~\ref{lem:estimate_R1}, the estimate of $\mathcal{A}_3(\U^{*,1})$ can be decomposed into two parts: the projection error for the nonlinear term of $u^{*,1}_{\lambda}$ and the difference between the nonlinear terms of $u^{*,1}_{\lambda}$ and $\U^{*,1}$. Since $u^{*,1}_{\lambda} \in Y^s$, we obtain the following estimate:
\begin{align}\label{eq:gamma_estimate_R1}
	\|\mathcal{A}_3(\U^{*,1})\|_{Y^{-\frac{1}{2}}}\leq C\left[\lambda^{-\frac{5}{2}-s+}N^{-\frac{1}{2}-s}+\lambda^{0+}\left(\left\|u^{*,1}_{\lambda}\right\|_{Y^{-\frac{1}{2}}}+\left\|\U^{*,1}\right\|_{Y^{-\frac{1}{2}}}\right)\cdot\|e_1\|_{Y^{-\frac{1}{2}}}\right].
\end{align}

Combining \eqref{eq:gamma_estimate_A1A2}, \eqref{eq:gamma_Y_norm_inequality}, \eqref{eq:gamma_estimate_R1}, and  applying the same reasoning as in \eqref{eq:e_estimate}--\eqref{eq:iterate_ineq}, we obtain
\begin{align}\label{eq:iterate_ineq_new}
	&\sup_{M_0\leq n\leq 2M_0}\|u(\sigma_n)-u^n\|_{H^{-\frac{1}{2}}}\notag\\
	&\quad\leq C_0 \|u(\sigma_{M_0})-u^{M_0}\|_{H^{-\frac{1}{2}}}+C_0\lambda^{-1-s+} N^{-\frac{1}{2}-s}+C_0\tau N^{3(\frac{1}{2}-\gamma)}\notag\\
	&\quad\leq C_0\left(\left\|u(\sigma_{M_0})-u^{M_0}\right\|_{H^{-\frac{1}{2}}}+\tau^{\frac{1+2s}{4-4s}-\frac{1+2s}{2}\varepsilon}\right),
\end{align}
with constant $C_0$ independent of $\tau$, $N$ and $\lambda$. Here we have used the relation $N=O(\tau^{-\frac{1}{2-2s}+\varepsilon})$, and 
\begin{align*}
	\tau N^{3(\frac{1}{2}-\gamma)}\sim\tau ^{1+3(\frac{1}{2}-\gamma)(-\frac{1}{2-2s}+\varepsilon)}=\tau^{\frac{1+2s}{4-4s}-\frac{1+2s}{2}\varepsilon}\cdot \tau^{\frac{3}{2-2s}(\gamma-s)+(2-2\gamma)\varepsilon+(s-\gamma)\varepsilon}
\end{align*}
where
\begin{align*}
	\frac{3}{2-2s}(\gamma-s)+(2-2\gamma)\varepsilon+(s-\gamma)\varepsilon>0
\end{align*}
by utilizing $s>\gamma$ and the condition \eqref{eq:gamma_condition_2}. 
By further combining \eqref{eq:reformulated_estimate} and \eqref{eq:iterate_ineq_new}, we establish the error estimate \eqref{reformulate_1} for all $M_0 \leq n \leq 2M_0$. Then, following the same steps as in \eqref{eq:H_gamma_estimate}--\eqref{eq:un_H_gamma}, we obtain the uniform $H^{\gamma}$ stability of the numerical method for all $M_0 \leq n \leq 2M_0$:
\begin{align}\label{eq:un_H_gamma_new}
	\sup_{M_0\leq n\leq 2M_0}\|u(\sigma_n)-u^n\|_{H^{\gamma}}\leq \|u\|_{L^{\infty}(0,T;H^{s})}+1,
\end{align}
for $\tau$ sufficiently small.

Since the time interval $[0,T]$ can be divided into at most $L:=\lfloor \frac{T}{M_0\tau}\rfloor+1$ slices, and
\begin{align*}
	\left\lfloor \frac{T}{M_0\tau}\right\rfloor+1\lesssim T\lambda^3,
\end{align*}
which is independent of $\tau$ and $N$, but may depends only on $T$ and $\|u\|_{L^{\infty}(0,T;H^{s})}$. 
We can iterate \eqref{eq:iterate_ineq_new} and \eqref{eq:un_H_gamma_new} in the classical manner to derive 
\begin{align}
	\sup_{0\leq n\leq T/\tau}\|u(\sigma_n)-u^n\|_{H^{-\frac{1}{2}}}\leq (1+C_0)^{L}\cdot\tau^{\frac{1+2s}{4-4s}-\frac{1+2s}{2}\varepsilon},
\end{align}
and
\begin{align}
	\sup_{0\leq n\leq T/\tau}\|u^n\|_{H^{\gamma}}\leq \|u\|_{L^{\infty}(0,T;H^{s})}+1
\end{align}
for $\tau$ sufficiently small. From this, it is clear that the conclusion \eqref{eq:main_thm} has been established, thereby completing the proof of Theorem~\ref{thm:main}.

\section{Numerical experiments}\label{sec:numerical_experiments}
In this section, we conduct numerical experiments to substantiate the theoretical analysis presented earlier. We demonstrate the convergence properties of the numerical solutions obtained using the proposed filtered low-regularity integrator, specifically for computing nonsmooth solutions of the Korteweg-de Vries (KdV) equation in spaces below $L^2$. We consider the KdV equation on the torus $[-\pi, \pi]$ with the following initial condition:
\begin{equation}
	u^{0}(x) = \frac{1}{10} \sum_{0 \neq k \in \mathbb{Z}} q_k|k|^{-0.501-s} e^{ikx},
\end{equation}
where $q_k\in\mathbb{C}$ is a phase factor satisfying $|q_k|=O(1)$, which ensures that the initial function $u^0$ belongs to $H^{s}$ but not to $H^{s+0.001}$. For $s=0$ and $s=-\frac{1}{4}$, we present the graph of the function $u^0$ in Figures~\ref{fig:6-0}(a) and (b), highlighting the significant singularities at $x=0$.

To address the KdV equation with nonsmooth initial data, specifically when $u_0 \in L^2$ and $u_0 \in H^{-\frac{1}{4}}$, we utilize the filtered low-regularity integrator, given by:
\begin{itemize}
	\item $\tau = 2^{-16} T$ and $N = O(\tau^{-\frac{1}{2}}) = 2^{-8}$ for $s = 0$,
	\item $\tau = 2^{-20} T$ and $N = O(\tau^{-\frac{2}{5}}) = 2^{-8}$ for $s = -\frac{1}{4}$.
\end{itemize}
The spatial discretization is performed using a Fourier pseudo-spectral method with $2^{14}$ modes.
\begin{figure}[htb!]
	\centering
	\subfigure{\includegraphics[width=6.1cm,height=4.8cm]{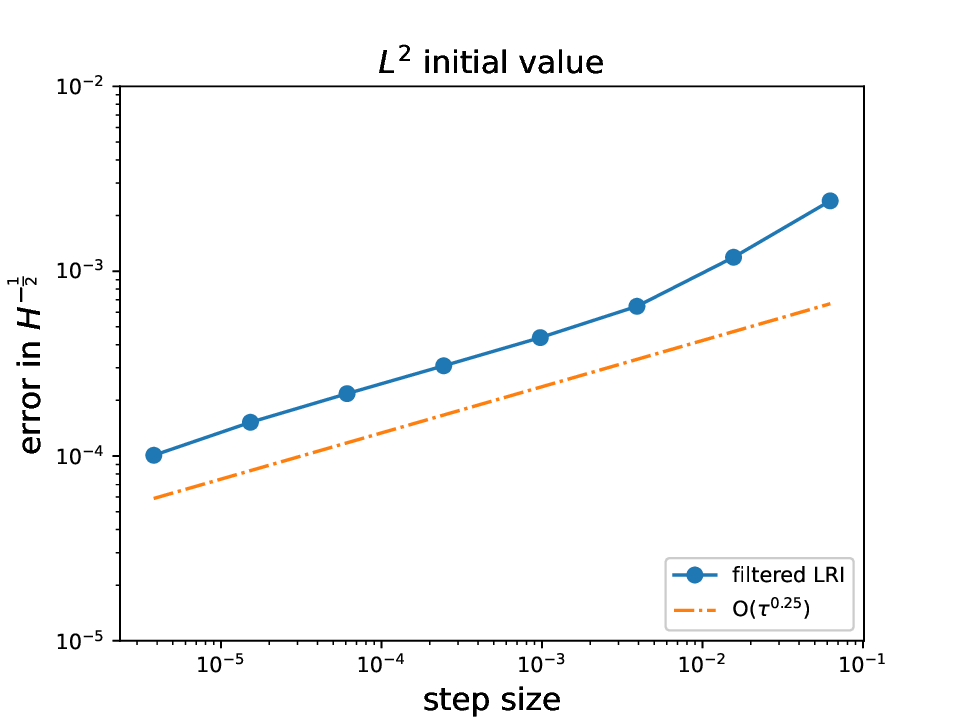}}
	\qquad
	\subfigure{\includegraphics[width=6.1cm,height=4.8cm]{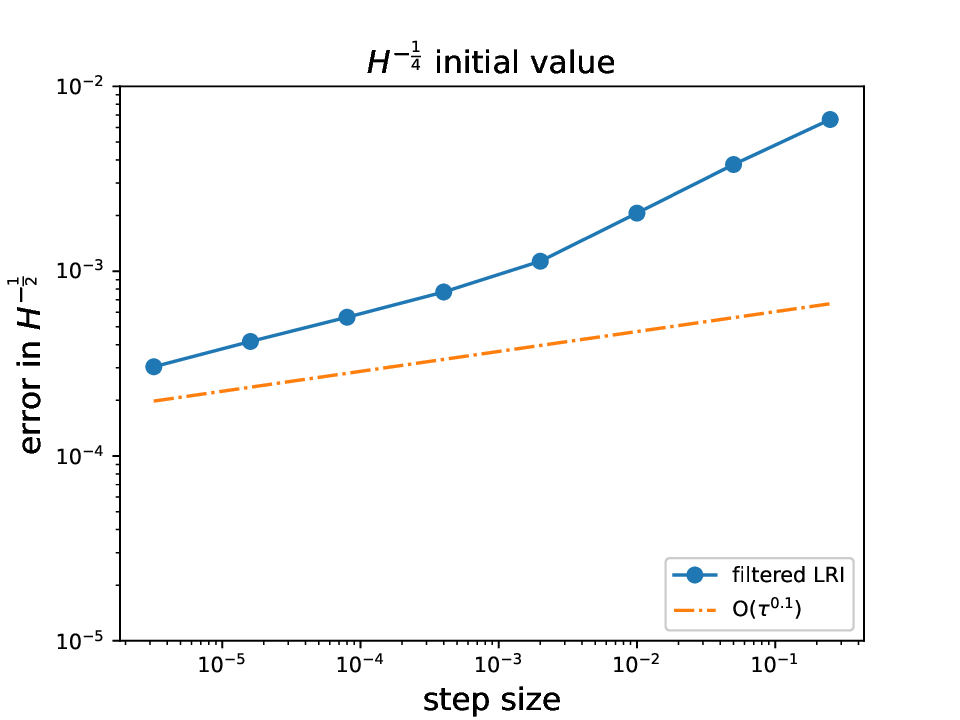}}
	\caption{Errors of the numerical solutions for $L^2$ and $H^{-\frac{1}{4}}$ initial values}
	\label{fig:6-1}
\end{figure}
In Figure~\ref{fig:6-1} we present the errors of the numerical solutions computed by the proposed method in \eqref{eq:filtered_LRI} at $T=1$ for different time steps $\tau$ and $N=\tau^{-\frac{1}{2-2s}}$, with the same spatial discretization using $2^{14}$ Fourier modes. The reference solutions are obtained using a sufficiently small $\tau$, specifically $\tau=2^{-20}$ for $s=0$ and $\tau=2^{-25}$ for $s=-\frac{1}{4}$. The numerical results in Figure \ref{fig:6-1} show that the convergence order of the proposed method for $H^{s}$ initial value is $\frac{1+2s}{4-4s}$ for $s=0$ and $s=-\frac{1}{4}$. This is consistent with the theoretical result proved in Theorem~\ref{thm:main}.  
\begin{figure}[htb!]
	\centering
	\subfigure[$L^2$ initial value]{\includegraphics[width=6.1cm,height=4.8cm]{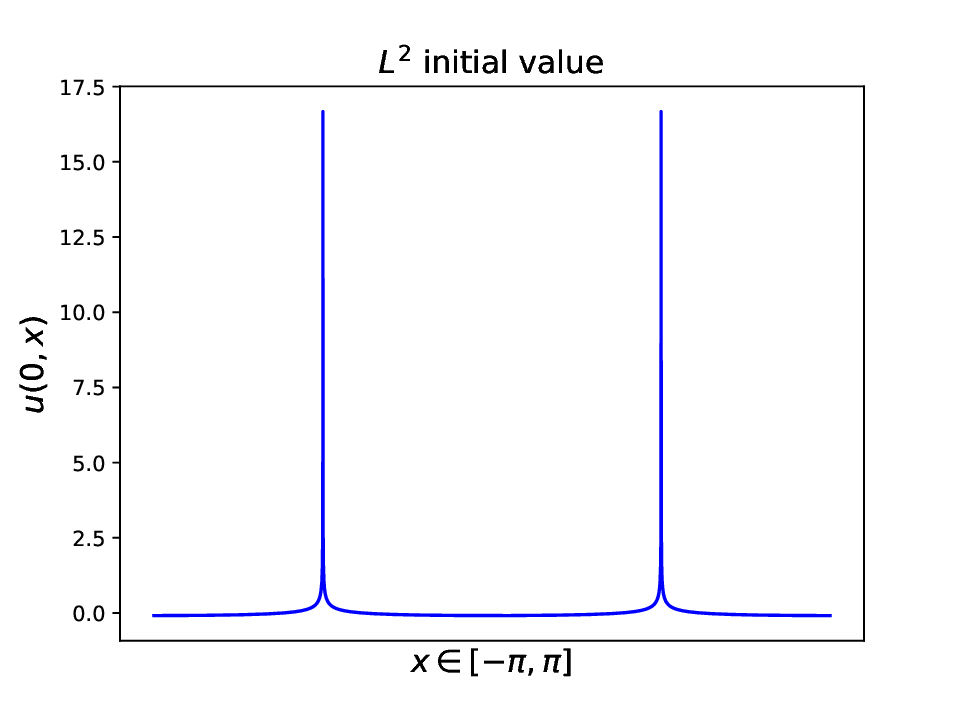}}
	\qquad
	\subfigure[$H^{-\frac{1}{4}}$ initial value]{\includegraphics[width=6.1cm,height=4.8cm]{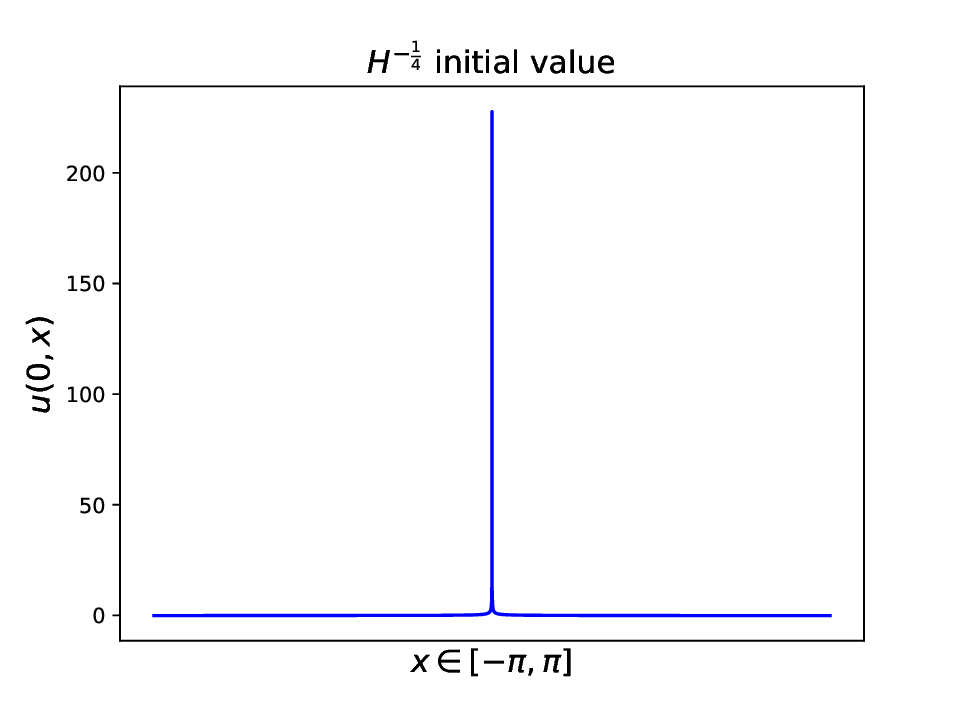}}
	\vfill
	\centering
	\subfigure[Numerical solution at $T=1$ with $L^2$ initial value]{\includegraphics[width=6.1cm,height=4.8cm]{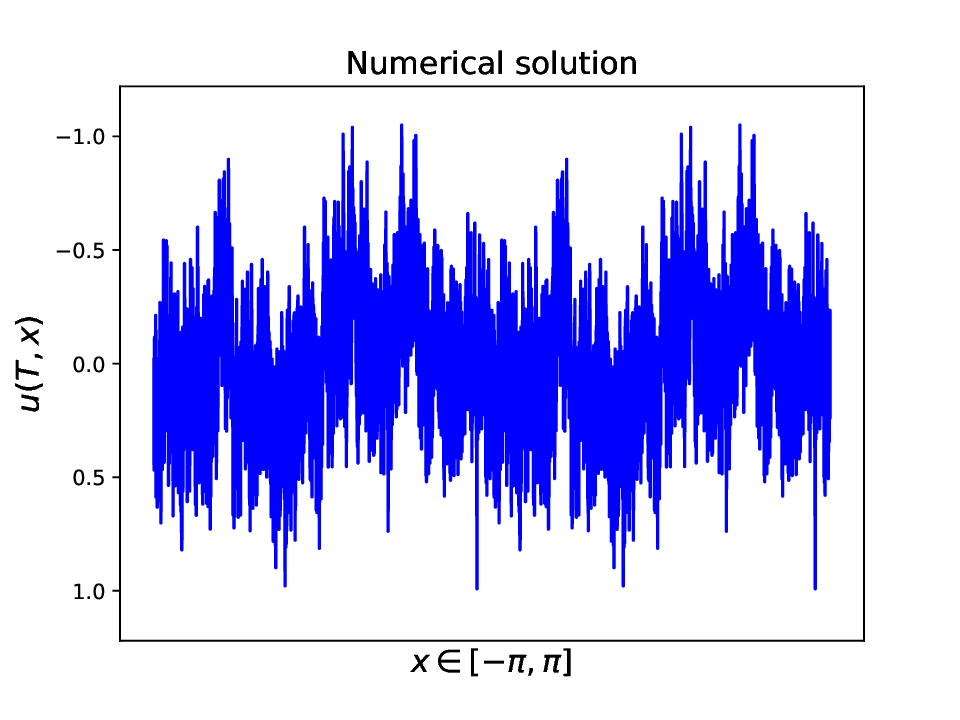}}
	\qquad
	\subfigure[Numerical solution at $T=1$ with $H^{-\frac{1}{4}}$ initial value]{\includegraphics[width=6.1cm,height=4.8cm]{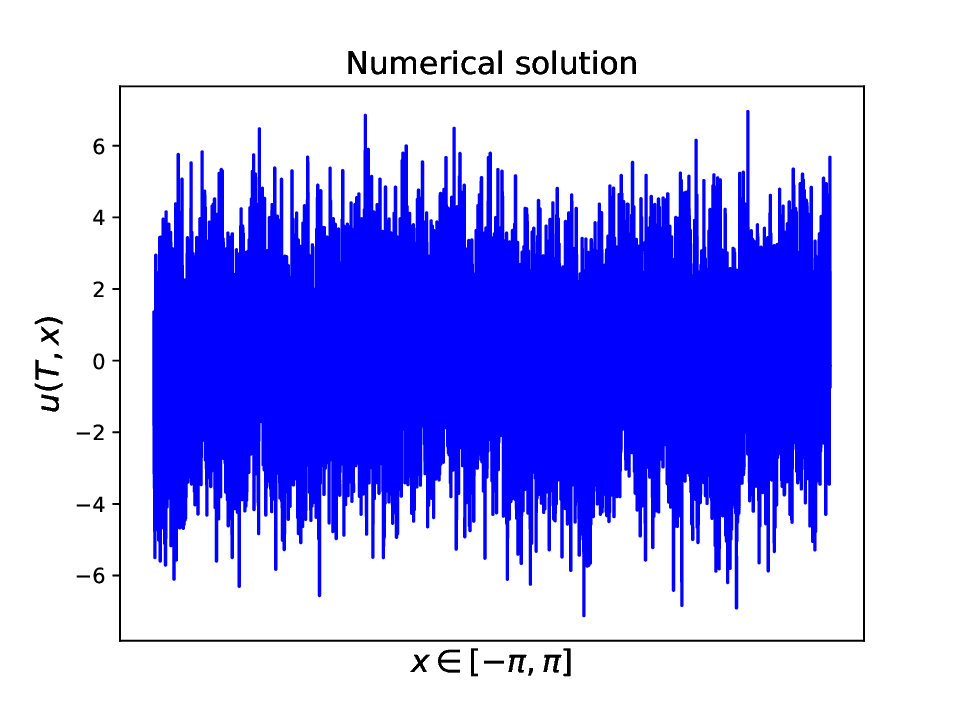}}
	\vfill
	\centering
	\subfigure[Numerical solution at $T=\pi/32$ with $L^2$ initial value]{\includegraphics[width=6.1cm,height=4.8cm]{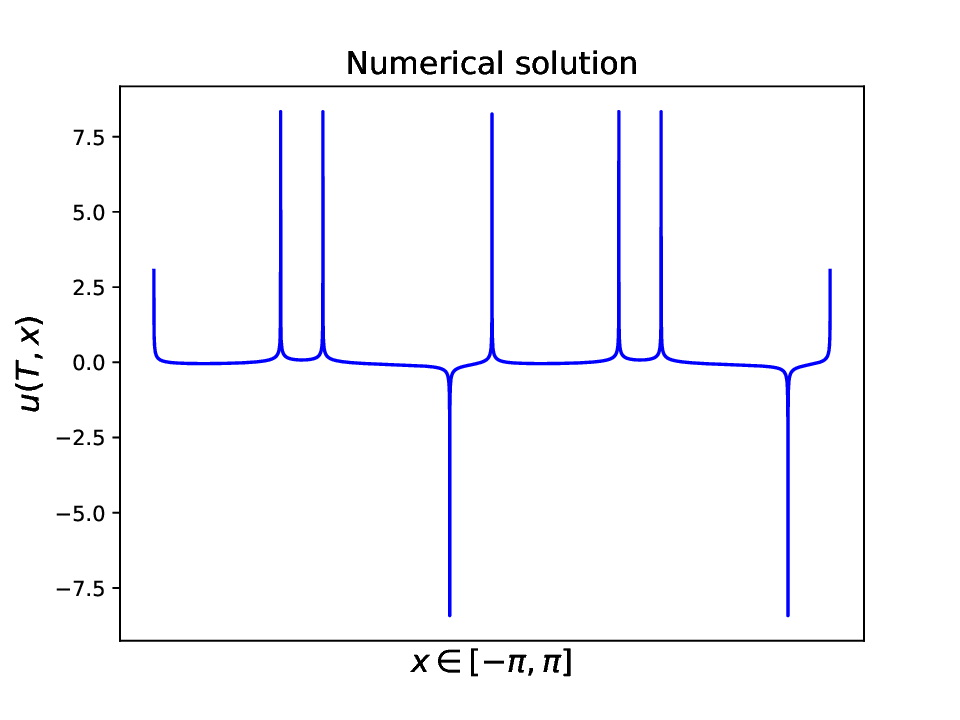}}
	\qquad
	\subfigure[Numerical solution at $T=\pi/32$ with $H^{-\frac{1}{4}}$ initial value]{\includegraphics[width=6.1cm,height=4.8cm]{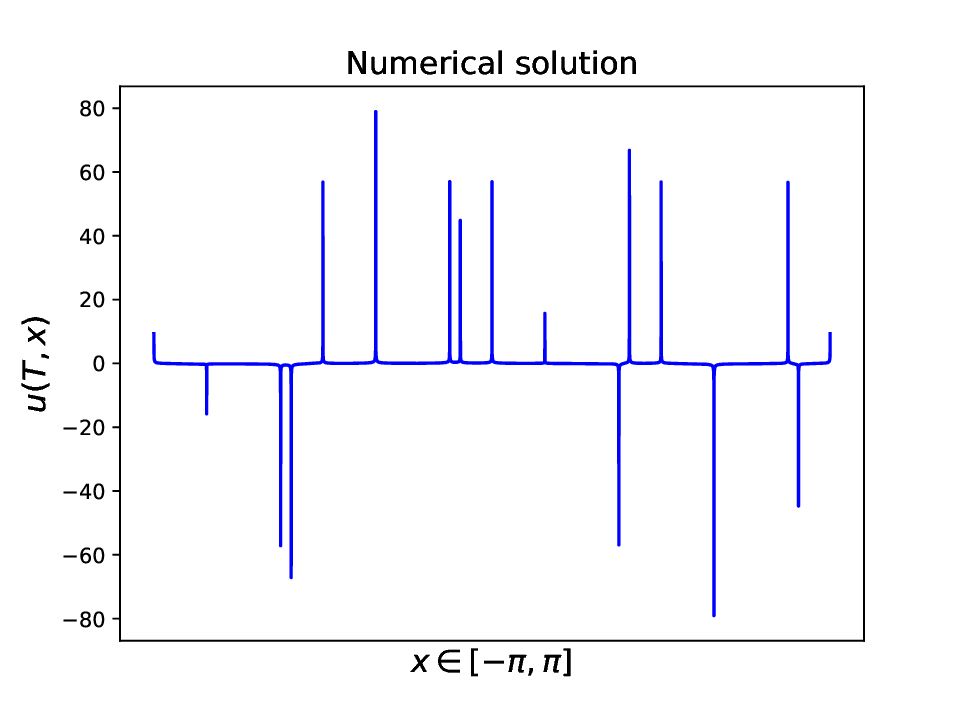}}
	\vspace{5pt}
	\caption{Numerical simulations for $L^2$ and $H^{-\frac14}$ initial values}
	\label{fig:6-0}
\end{figure}
The numerical solutions for initial data in $L^2$ with $q_k = \frac{1}{2} \left( e^{\frac{ik\pi}{2}} + e^{-\frac{ik\pi}{2}} \right)$ and in $H^{-\frac{1}{4}}$ with $q_k = 1$ are shown in Figure~\ref{fig:6-0}(c)--(f) for final times $T = 1$ and $T = \frac{\pi}{32}$, respectively. The accuracy of these results is corroborated by the convergence tests shown in Figure~\ref{fig:6-1}. These findings illustrate markedly different behaviors of the solutions depending on whether the final time $T$ is a rational or irrational multiple of the interval length. Specifically, when $T = 1$, an irrational multiple of the interval length, the initial singularities of the KdV equation lead to pronounced spatial oscillations. Conversely, when $T = \frac{\pi}{32}$, a rational multiple of the interval length, there is a refocusing of the singularities that closely resembles the initial conditions, i.e., the solution has finite concentrations of singularities without spatial oscillations. The numerical results are consistent with the well-known Talbot effect, which finds applications in various fields such as image processing, photolithography, and nonlinear optics \cite{Talbot,Zhang}. These results demonstrate that our numerical algorithm effectively captures the dynamics of these rough solutions.

\section{Conclusion}

We have introduced a novel numerical-analytic framework for solving the KdV equation with initial data in the negative Sobolev spaces, a regime where classical numerical methods typically fail due to their strict regularity requirements. By employing a continuous reformulation strategy, we have transformed the discrete-in-time numerical solution into a continuous-in-time function that satisfies a perturbed KdV equation at the continuous level. This reformulation allows us to convert the stability analysis of the numerical scheme into stability analysis at the continuous level. Consequently, we successfully leveraged the powerful Bourgain-space estimates at the continuous level, combined with suitable rescaling arguments that converts the problem to small initial value and the analysis of the perturbation terms in the Bourgain spaces. This new approach establishes a direct and rigorous bridge between numerical analysis and well-posedness theory based on the Bourgain-space techniques.

With this framework, we have proved convergence of the proposed filtered low-regularity integrator in the $H^{-\frac{1}{2}}$ norm with an order close to ${(1+2s)}/{(4-4s)}$ (with respect to the time stepsize) for initial data in $H^s$, where $-\frac{1}{2}< s \leq 0$, and attains nearly optimal-order convergence with respect to the spatial degrees of freedom (Corollary~\ref{rem:fully_discrete}). Numerical experiments confirm these theoretical findings, demonstrating the robustness and efficiency of our method in accurately capturing subtle dispersive phenomena, such as the Talbot effect, and effectively computing physically realistic, rough initial data.

The analytical and numerical framework introduced here, which combines Bourgain-space techniques with continuous reformulation of the numerical scheme, is not limited to the KdV equation and can be readily adapted to other nonlinear dispersive partial differential equations with solutions of low regularity, thus providing a powerful new approach for future numerical analysis in this challenging regime.

\newpage
\pagestyle{empty}

\appendix

\begin{center}
	\textbf{\Large{Supplementary material}}
\end{center}

\

In this supplementary material, we provide the proofs of some preliminary results presented in the  paper, as well as the detailed proof of the boundedness of the iterative sequence \eqref{eq:iterative_sequence} in the Bourgain space.

\section{Proofs of some results in Section~\ref{sec:bourgain}}\label{Appendix:A}
\begin{proof}[Proof of Lemma~\ref{lem:embedding_Ys}]
	The proof follows directly from the inequality $$\|w\|_{L_{t}^{\infty}H_{x}^s}\leq \|\langle k \rangle^s \tilde{w}(\sigma,k)\|_{L^2((\mathrm{d}k)_{\lambda}) L^1(\mathrm{d}\sigma)}.$$
	
\end{proof}

\begin{proof}[Proof of Lemma~\ref{lem:time_localization}]
	If we use Fourier expansion $\eta(t)=\int_{\mathbb{R}}\hat{\eta}(\sigma_0)e^{it\sigma_0}\d \sigma_0$, then following the same reasoning as in Lemma~2.11 of \cite{Tao}, we obtain
	\begin{align*}
		\|\eta(t)w\|_{X_{s,b}}\leq C_0 \left(\int_{\mathbb{R}}|\hat{\eta}(\sigma_0)|\langle\sigma_0\rangle^{|b|}\d\sigma_0\right)\|w\|_{X_{s,b}},
	\end{align*}
	where $C_0$ is a positive constant depending only on $b$. Since $\eta$ is Schwartz, $\hat{\eta}$ is also rapidly decreasing, and thus the desired estimate $\|\eta(t)w\|_{X_{s,b}}\leq C\|w\|_{X_{s,b}}$ holds with a positive constant $C$ independent of $\lambda$. The other two inequalities in \eqref{eq:time_localization} can be proved in exactly the same manner.
\end{proof}

\begin{proof}[Proof of Corollary~\ref{corollary:Z0}]
	From the definition of the Bourgain space $X_{s,b}$ and the embedding result in Lemma~\ref{lem:embedding_theorem},
	we know that $X_{0,\frac{1}{2}}\hookrightarrow X_{0,\frac{1}{3}}\hookrightarrow L^{4}_t L^{4}_x$. 
	By applying the duality argument, we obtain $(L^{4}_t L^{4}_x)^{\prime}\hookrightarrow (X_{0,\frac{1}{2}})^{\prime}.$
	
	Since $(L^{4}_t L^{4}_x)^{\prime}=L^{\frac{4}{3}}_t L^{\frac{4}{3}}_x$ and $(X_{0,\frac{1}{2}})^{\prime}=X_{0,-\frac{1}{2}}$, it follows that $\|w\|_{X_{0,-\frac{1}{2}}}\lesssim \|w\|_{L^{\frac{4}{3}}_t L^{\frac{4}{3}}_x}$.

	Thus, it remains to show that
	\begin{align}
		\left\|\frac{\tilde{w}(\sigma,k)}{\langle\sigma-4\pi^2k^3\rangle}\right\|_{L^2((\d k)_{\lambda})L^1(\d \sigma)}\lesssim \|w\|_{L^{\frac{4}{3}}_t L^{\frac{4}{3}}_x}.
	\end{align}
	By the duality argument, it suffices to prove
	\begin{align}\label{eq:dulaity_L43}
		\sup_{\|\tilde{f}\|_{L^2_k L^\infty_\sigma}=1}\int\int \langle\sigma-4\pi^2k^3\rangle^{-1}\tilde{w}(\sigma,k)\tilde{f}(\sigma,k)\d \sigma(\d k)_{\lambda}\lesssim \|w\|_{L^{\frac{4}{3}}_t L^{\frac{4}{3}}_x}.
	\end{align}
	By using the Parseval equality, we have
	\begin{align*}
		&\int\int \langle\sigma-4\pi^2k^3\rangle^{-1}\tilde{w}(\sigma,k)\tilde{f}(\sigma,k)\d \sigma(\d k)_{\lambda}\\
		&\quad=\int_0^{\infty}\int_0^{\lambda}w(t,x)\mathscr{F}^{-1}\left(\langle\sigma-4\pi^2k^3\rangle^{-1}\tilde{f}(\sigma,k) \right)(t,x)\d t\d x\\
		&\quad \leq  \|w\|_{L^{\frac{4}{3}}_t L^{\frac{4}{3}}_x}\cdot
		\Big\|\mathscr{F}^{-1}\left(\langle\sigma-4\pi^2k^3\rangle^{-1}\tilde{f}(\sigma,k) \right)(t,x)\Big\|_{L^4_t L^4_x},
	\end{align*}
	where $\mathscr{F}^{-1}$ denotes the inverse Fourier transform.  Then, according to the embedding result \eqref{eq:embedding_theorem}, we obtain
	\begin{align*}
		&\left\|\mathscr{F}^{-1}\left(\langle\sigma-4\pi^2k^3\rangle^{-1}\tilde{f}(\sigma,k) \right)(t,x)\right\|_{L^4_t L^4_x}\\
		&\quad\lesssim \left\|\mathscr{F}^{-1}\left(\langle\sigma-4\pi^2k^3\rangle^{-1}\tilde{f}(\sigma,k) \right)(t,x)\right\|_{X_{0,\frac{1}{3}}} =\left\|\langle\sigma-4\pi^2k^3\rangle^{-\frac{2}{3}}\tilde{f}(\sigma,k)\right\|_{L^2_\sigma L^2_k}\\
		&\quad=\left(\int\int \langle\sigma-4\pi^2k^3\rangle^{-\frac{4}{3}}|\tilde{f}(\sigma,k)|^2\d \sigma (\d k)_{\lambda}\right)^{\frac{1}{2}}\\
		&\quad \leq \left(\int \|\tilde{f}(\sigma,k)\|_{L^{\infty}_{\sigma}}^2\cdot\left(\int \langle\sigma-4\pi^2k^3\rangle^{-\frac{4}{3}}\d \sigma\right) (\d k)_{\lambda}\right)^{\frac{1}{2}}\lesssim \|\tilde{f}\|_{L^2((\d k)_{\lambda})L^{\infty}(\d \sigma)}.
	\end{align*}
	Combining these estimates yields \eqref{eq:dulaity_L43}, which completes the proof.
\end{proof}

\begin{proof}[Proof of Lemma~\ref{lem:Bernstein}]
	Inequalities \eqref{eq:Bernstein_1} and \eqref{eq:Bernstein_3} can be directly proven using the definition of the projection operator. Moreover, from the scaling invariance of the Sobolev inequality, we have:
	\begin{align}
		\|\Pi_{N_{\lambda}} u\|_{L^{4}} \lesssim \|\Pi_{N_{\lambda}} u\|_{H^{\frac{1}{4}}}.
	\end{align}
	Finally, by applying \eqref{eq:Bernstein_1}, we deduce \eqref{eq:Bernstein_2}.
\end{proof}

\section{Proof of Lemma~\ref{lem:boundedness_um}}\label{Appendix:B}

\begin{proof}
	Since $\U^0=0$, the estimate in \eqref{eq:boundedness_um} naturally holds for $\ell=0$. Moreover, from the definition of $\Gamma$ in \eqref{eq:def_Gamma}, we have $\U^1=\eta(t)e^{-t \partial_x^3}\phi_{\lambda}$. By Lemma~\ref{lem:free_solution}, we obtain
	\begin{align*}
		\|\U^1\|_{Y^s}=\|\eta(t)e^{-t \partial_x^3}\phi_{\lambda}\|_{Y^s}\leq C_0\|\phi_{\lambda}\|_{H^s},
	\end{align*}
	where the constant $C_0$ independent of $\tau$, $N$ and $\lambda$. 
	
	Next, we assume that \eqref{eq:boundedness_um} holds with $C^{\#} = 2C_0$ for all $\U^j$ with $j \in \{0, \dots, \ell\}$ and $\ell \geq 1$. 
	By taking the $Y^s$ norm on both sides of $\U^{\ell+1} = \Gamma(\U^\ell)$, with $\Gamma$ given by the following equivalent reformulation of \eqref{eq:def_Gamma}:
	\begin{align}\label{eq:def_Gamma_equivalent}
		\Gamma(w)(t)=&\eta(t)e^{-t \partial_x^3}\phi_{\lambda}\notag\\
		&+\frac{1}{2}\eta(t)\int_{0}^{t}e^{(s-t)\partial_x^3}\eta(s)\Pi_{N_{\lambda}}\partial_{x}\left(\Pi_{N_{\lambda}}w(s)\right)^2\d s+\mathcal{A}_1(w)(t)+\mathcal{A}_2(w)(t),
	\end{align}
	we obtain:
	\begin{align}\label{eq:u_mp1}
		\|\U^{\ell+1}\|_{Y^s}&\leq  \|\eta(t)e^{-t \partial_x^3}\phi_{\lambda}\|_{Y^s}+\left\|\frac{1}{2}\eta(t)\int_{0}^{t}e^{(s-t)\partial_x^3}\eta(s)\Pi_{N_{\lambda}}\partial_{x}\left(\Pi_{N_{\lambda}}\U^\ell(s)\right)^2\d s\right\|_{Y^s}\notag\\
		&\quad+\|\mathcal{A}_1(\U^\ell)\|_{Y^s}+\|\mathcal{A}_2(\U^\ell)\|_{Y^s}\notag\\
		&\leq C_0\|\phi_{\lambda}\|_{H^s}+C\left\|\eta(t)\Pi_{N_{\lambda}}\partial_{x}\left(\Pi_{N_{\lambda}}\U^\ell(t)\right)^2\right\|_{Z^s}+\|\mathcal{A}_1(\U^\ell)\|_{Y^s}+\|\mathcal{A}_2(\U^\ell)\|_{Y^s}\notag\\
		&\leq C_0\|\phi_{\lambda}\|_{H^s}+C\lambda^{0+}\left\|\U^\ell\right\|^2_{X_{s,\frac{1}{2}}}+\|\mathcal{A}_1(\U^\ell)\|_{Y^s}+\|\mathcal{A}_2(\U^\ell)\|_{Y^s},
	\end{align}
	where we have used Lemmas~\ref{lem:free_solution}, \ref{lem:energy_estimate}, \ref{lem:bilinear_estimate} and Remark~\ref{rem:bilinear_estimate} in the above inequalities. We shall estimate each term on the right-hand side of \eqref{eq:u_mp1}, i.e. $\|\mathcal{A}_1(\U^\ell)\|_{Y^s}$ and $\|\mathcal{A}_2(\U^\ell)\|_{Y^s}$.
	
	{\sc Estimate of $\|\mathcal{A}_1(\U^\ell)\|_{Y^s}$:} Recalling the definition of $\mathcal{A}_1$ in \eqref{eq:def_Gamma} and using the results in Lemma~\ref{lem:energy_estimate}, Lemma~\ref{lem:embedding_theorem} and Corollary \ref{corollary:Z0}, we get
	\begin{align}\label{eq:estimate_A_1}
		\|\mathcal{A}_1(\U^\ell)\|_{Y^s}&\lesssim \left\|\eta(t)\Pi_{N_{\lambda}}\partial_{x}\Big(\Pi_{N_{\lambda}}\F(t,\U^\ell)\Pi_{N_{\lambda}}\U^\ell(t)\Big)\right\|_{Z^s}\notag\\
		&\lesssim N_{\lambda}^{1+s}\left\|\eta(t)\Big(\Pi_{N_{\lambda}}\F(t,\U^\ell)\Pi_{N_{\lambda}}\U^\ell(t)\Big)\right\|_{Z^0}\notag\\
		&\lesssim N_{\lambda}^{1+s}\left\|\eta(t)\Big(\Pi_{N_{\lambda}}\F(t,\U^\ell)\Pi_{N_{\lambda}}\U^\ell(t)\Big)\right\|_{L^{\frac{4}{3}}_t L^{\frac{4}{3}}_x}\notag\\
		&\lesssim N_{\lambda}^{1+s}\left\|\eta(t)\Pi_{N_{\lambda}}\F(t,\U^\ell)\right\|_{L^2_t L^2_x}\|\Pi_{N_{\lambda}}\U^\ell(t)\|_{L^4_t L^4_x}\notag\\
		&\lesssim N_{\lambda}^{1+s}\left\|\eta(t)\Pi_{N_{\lambda}}\F(t,\U^\ell)\right\|_{L^2_t L^2_x}\|\Pi_{N_{\lambda}}\U^\ell(t)\|_{X_{0,\frac{1}{3}}}.
	\end{align}
	Since $\|\Pi_{N_{\lambda}}\U^\ell(t)\|_{X_{0,\frac{1}{3}}}\leq \|\Pi_{N_{\lambda}}\U^\ell(t)\|_{X_{0,\frac{1}{2}}}\lesssim N_{\lambda}^{-s}\|\U^\ell\|_{X_{s,\frac{1}{2}}}$, we get
	\begin{align}\label{eq:A_1}
		\|\mathcal{A}_1(\U^\ell)\|_{Y^s}\lesssim N_{\lambda}\left\|\eta(t)\Pi_{N_{\lambda}}\F(t,\U^\ell)\right\|_{L^2_t L^2_x}\|\U^\ell\|_{X_{s,\frac{1}{2}}}.
	\end{align}

	Next, we provide a detailed estimate of $\left\|\eta(t)\Pi_{N_{\lambda}}\F(t, \U^\ell)\right\|_{L^2_t L^2_x}$. Recall that $\F$ is defined in terms of time slices. If we were to directly use the boundedness of the $L_t^\infty H_x^{s}$ norm of $\U^\ell$ to estimate the $L^2_x$ norm of $\F$ at each time step, the resulting estimate would be too coarse to guarantee satisfactory stability results.
	Therefore, we instead use the Bourgain norm of $\U^\ell$ to estimate $\left\|\eta(t)\Pi_{N_{\lambda}}\F(t,\U^\ell)\right\|_{L^2_t L^2_x}$ directly.
	To proceed, we recall \eqref{eq:def_F0}–\eqref{eq:def_F} and introduce the following notation:
	\begin{align}\label{defNmn}
		\mathcal{N}_n^\ell(s) := \Pi_{N_{\lambda}}\U^{\ell}(s) - e^{(t_{n}-s)\partial_x^3}\Pi_{N_{\lambda}}\U^{\ell}(t_n).
	\end{align}
	Then, we have the following estimate:
	\begin{align}\label{eq:F_L2L2}
		&\left\|\eta(t)\Pi_{N_{\lambda}}\F(t,\U^\ell)\right\|_{L^2_t L^2_x}^2\notag\\
		&=\frac{1}{4}\sum_{n}\int_{t_n}^{t_{n+1}}
		\int_0^{\lambda}
		\Big|\eta(t)\int_{t_{n}}^t  e^{(s-t)\partial_x^3}\Pi_{N_{\lambda}}
		\partial_{x}\left(e^{(t_{n}-s)\partial_x^3}\Pi_{N_{\lambda}}\U^{\ell}(t_n)\right)^2\d s
		\Big|^2\d x\d t\notag\\
		& \leq \frac{1}{4}\sum_{n}\int_{t_n}^{t_{n+1}} 
		\int_0^{\lambda}
		\Big|\eta(t)\int_{t_{n}}^t e^{(s-t)\partial_x^3}\Pi_{N_{\lambda}}\partial_{x}
		\big(\Pi_{N_{\lambda}}\U^{\ell}(s)\big)^2\d s
		\Big|^2\d x\d t\notag\\
		&\quad+\frac{1}{2}\sum_{n}\int_{t_n}^{t_{n+1}}\int_0^{\lambda}
		\Big|
		\eta(t)\int_{t_{n}}^t e^{(s-t)\partial_x^3}\Pi_{N_{\lambda}}\partial_{x} 
		\big(\mathcal{N}_n^\ell(s) \Pi_{N_{\lambda}}\U^{\ell}(s)\big)\d s
		\Big|^2\d x\d t\notag\\
		&\quad+\frac{1}{4}\sum_{n}\int_{t_n}^{t_{n+1}}\int_0^{\lambda}
		\Big|\eta(t)\int_{t_{n}}^t e^{(s-t)\partial_x^3}\Pi_{N_{\lambda}}\partial_{x} \big(\mathcal{N}_n^\ell(s)\big)^2\d s
		\Big|^2\d x\d t\notag\\
		&=:I_1+I_2+I_3.
	\end{align}
	We begin by estimating $I_1$ using Bernstein's inequality and Hölder's inequality, yielding the following result:
	\begin{align}\label{eq:I_1}
		I_1&\lesssim N_{\lambda}^2\cdot \sum_{n}\int_{t_n}^{t_{n+1}}\int_0^{\lambda}\Big|\eta(t)\int_{t_{n}}^t e^{(s-t)\partial_x^3}\left(\Pi_{N_{\lambda}}\U^{\ell}(s)\right)^2\d s\Big|^2\d x\d t\notag\\
		&\leq N_{\lambda}^2\cdot\sum_{n}\int_{t_n}^{t_{n+1}}(t-t_n)\int_0^{\lambda}\int_{t_{n}}^t \Big|e^{(s-t)\partial_x^3}\left(\Pi_{N_{\lambda}}\U^{\ell}(s)\right)^2\Big|^2\d s\d x\d t\notag\\
		&=N_{\lambda}^2\cdot\sum_{n}\int_{t_n}^{t_{n+1}}(t-t_n)\left\|e^{(s-t)\partial_x^3}\left(\Pi_{N_{\lambda}}\U^{\ell}(s)\right)^2\right\|^2_{L^2(t_n,t;L^2(0,\lambda))}\d t.
	\end{align}
	Since the operator $e^{(s-t)\partial_x^3}$ preserves the $L^2_x$ norm of function, we have 
	\begin{align*}
		&\Big\|e^{(s-t)\partial_x^3}\left(\Pi_{N_{\lambda}}\U^{\ell}(s)\right)^2\Big\|^2_{L^2(t_n,t;L^2(0,\lambda))}=\Big\|\left(\Pi_{N_{\lambda}}\U^{\ell}(s)\right)^2\Big\|^2_{L^2(t_n,t;L^2(0,\lambda))}\\[2mm]
		&\quad=\left\|\Pi_{N_{\lambda}}\U^{\ell}(s)\right\|^4_{L^4(t_n,t;L^4(0,\lambda))}
		\leq \left\|\Pi_{N_{\lambda}}\U^{\ell}(s)\right\|^4_{L^4(t_n,t_{n+1};L^4(0,\lambda))},
	\end{align*}
	for all $t\in[t_n,t_{n+1}]$ and $n=0,1,2,\cdots$.
	This together with \eqref{eq:I_1} and Lemma~\ref{lem:embedding_theorem} yields
	\begin{align}
		I_1&\lesssim N_{\lambda}^2\cdot\sum_{n}\int_{t_n}^{t_{n+1}}(t-t_n)\d t\cdot \left\|\Pi_{N_{\lambda}}\U^{\ell}(s)\right\|^4_{L^4(t_n,t_{n+1};L^4(0,\lambda))}\notag\\
		&\leq N_{\lambda}^2\tau_{\lambda}^2\sum_{n}\left\|\Pi_{N_{\lambda}}\U^{\ell}(s)\right\|^4_{L^4(t_n,t_{n+1};L^4(0,\lambda))}=N_{\lambda}^2\tau_{\lambda}^2\left\|\Pi_{N_{\lambda}}\U^{\ell}(s)\right\|^4_{L^4(\R;L^4(0,\lambda))}\notag\\
		&\lesssim N_{\lambda}^2\tau_{\lambda}^2\left\|\Pi_{N_{\lambda}}\U^{\ell}\right\|^4_{X_{0,\frac{1}{3}}}\lesssim N_{\lambda}^{2-4s}\tau_{\lambda}^2\left\|\U^{\ell}\right\|^4_{X_{s,\frac{1}{2}}}.
	\end{align}
	The estimate for $I_2$ relies on the following rough estimates for any $t^\prime\geq t$ and functions $v, w$,
	\begin{align}\label{eq:rough_estimate}
		&\Big\|\int_{t}^{t^\prime}e^{(s-t)\partial_x^3}\Pi_{N_{\lambda}}\partial_{x}\Big(\Pi_{N_{\lambda}}v(s)\cdot\Pi_{N_{\lambda}}w(s)\Big)\d s\Big\|_{L^2_x}\notag\\
		&\quad \lesssim (t^\prime-t)N_{\lambda}\|\Pi_{N_{\lambda}}v(s)\|_{L^{\infty}(t,t^\prime;L^{4}(0,\lambda))}\|\Pi_{N_{\lambda}}w(s)\|_{L^{\infty}(t,t^\prime;L^4(0,\lambda))}\notag\\
		&\quad\lesssim(t^\prime-t)N_{\lambda}^{\frac{3}{2}}\|\Pi_{N_{\lambda}}v(s)\|_{L^{\infty}(t,t^\prime;L^2(0,\lambda))}\|\Pi_{N_{\lambda}}w(s)\|_{L^{\infty}(t,t^\prime;L^2(0,\lambda))},
	\end{align}
	where we have applied Lemma~\ref{lem:Bernstein} in the above estimates. 
	By using \eqref{eq:rough_estimate}, we obtain
	\begin{align}\label{eq:estimate_I2}
		I_2&\lesssim \sum_{n}\int_{t_n}^{t_{n+1}}\tau_{\lambda}^2 N_{\lambda}^{3}
		\left\| \mathcal{N}_n^\ell(t) \right\|^2_{L^{\infty}(t_n,t_{n+1};L^2(0,\lambda))}
		\cdot \|\Pi_{N_{\lambda}}\U^\ell(t)\|^2_{L^{\infty}(t_n,t_{n+1};L^2(0,\lambda))}\d t\notag\\
		&\lesssim \tau_{\lambda}^2 N_{\lambda}^{3}\sup_{n}
		\left\|\mathcal{N}_n^\ell(t) \right\|^2_{L^{\infty}_t(t_n,t_{n+1};L^2(0,\lambda))}
		\|\Pi_{N_{\lambda}}\U^\ell(t)\|^2_{L^{\infty}_t L^2_x}.
	\end{align}
	To estimate $\mathcal{N}_n^\ell(t)$, we recall the expression $\U^\ell(t_n)=\Gamma(\U^{\ell-1})(t_n)$. 
	Thus, it follows that
	\begin{align*}
		e^{(t_{n}-t)\partial_x^3}\U^\ell(t_n)=\ &\eta(t_n)e^{-t \partial_x^3}\phi_{\lambda}+\frac{1}{2}\eta(t_n)\int_{0}^{t_n}e^{(s-t)\partial_x^3}\eta(s)\Pi_{N_{\lambda}}\partial_{x}\left(\Pi_{N_{\lambda}}\U^{\ell-1}(s)\right)^2\d s\notag\\
		&-\eta(t_n)\int_{0}^{t_n}e^{(s-t)\partial_x^3}\eta(s)\Pi_{N_{\lambda}}\partial_{x}\Big(\Pi_{N_{\lambda}}\F(s,\U^{\ell-1})\Pi_{N_{\lambda}}\U^{\ell-1}(s)\Big)\d s\\
		&+\frac{1}{2}\eta(t_n)\int_{0}^{t_n}e^{(s-t)\partial_x^3}\eta(s)\Pi_{N_{\lambda}}\partial_{x}\left(\Pi_{N_{\lambda}}\F(s,\U^{\ell-1})\right)^2\d s.
	\end{align*}
	By subtracting the above equation from $\U^{\ell}(t)=\Gamma(\U^{\ell-1})(t)$ for $t\in[t_n,t_{n+1}]$, we derive
	\begin{align}\label{eq:estimate_um}
		\mathcal{N}_n^\ell(t)
		& =\left[\eta(t)-\eta(t_n)\right]e^{-t \partial_x^3}\Pi_{N_{\lambda}}\phi_{\lambda}\notag\\
		&\quad+\frac{1}{2}\left[\eta(t)-\eta(t_n)\right]\int_{0}^{t_n}e^{(s-t)\partial_x^3}\eta(s)\Pi_{N_{\lambda}}\partial_{x}\left(\Pi_{N_{\lambda}}\U^{\ell-1}(s)\right)^2\d s\notag\\
		&\quad-\left[\eta(t)-\eta(t_n)\right]\int_{0}^{t_n}e^{(s-t)\partial_x^3}\eta(s)\Pi_{N_{\lambda}}\partial_{x}\Big(\Pi_{N_{\lambda}}\F(s,\U^{\ell-1})\Pi_{N_{\lambda}}\U^{\ell-1}(s)\Big)\d s\notag\\
		&\quad+\frac{1}{2}\left[\eta(t)-\eta(t_n)\right]\int_{0}^{t_n}e^{(s-t)\partial_x^3}\eta(s)\Pi_{N_{\lambda}}\partial_{x}\left(\Pi_{N_{\lambda}}\F(s,\U^{\ell-1})\right)^2\d s\notag\\
		&\quad+\frac{1}{2}\eta(t)\int_{t_n}^{t}e^{(s-t)\partial_x^3}\eta(s)\Pi_{N_{\lambda}}\partial_{x}\left(\Pi_{N_{\lambda}}\U^{\ell-1}(s)\right)^2\d s\notag\\
		&\quad-\eta(t)\int_{t_n}^{t}e^{(s-t)\partial_x^3}\eta(s)\Pi_{N_{\lambda}}\partial_{x}\Big(\Pi_{N_{\lambda}}\F(s,\U^{\ell-1})\Pi_{N_{\lambda}}\U^{\ell-1}(s)\Big)\d s\notag\\
		&\quad+\frac{1}{2}\eta(t)\int_{t_n}^{t}e^{(s-t)\partial_x^3}\eta(s)\Pi_{N_{\lambda}}\partial_{x}\left(\Pi_{N_{\lambda}}\F(s,\U^{\ell-1})\right)^2\d s.
	\end{align}
	Noting that $|\eta(t)-\eta(t_n)|\lesssim (|\eta(t)|+|\eta(t_n)|)^{\frac{2}{3}}|\eta(t)-\eta(t_n)|^{\frac{1}{3}}\lesssim \tau^{\frac{1}{3}}_{\lambda}$, we obtain the estimate
	\begin{align*}
		\big\|\left[\eta(t)-\eta(t_n)\right]e^{-t \partial_x^3}\Pi_{N_{\lambda}}\phi_{\lambda}\big\|_{L^2_x}\lesssim \tau^{\frac{1}{3}}_{\lambda}N_{\lambda}^{-s}\|\phi_{\lambda}\|_{H^s}.
	\end{align*}
	For the remaining terms on the right-hand side of \eqref{eq:estimate_um}, since $t,t_n\lesssim 1$, whenever $\eta(t),\eta(t_n)\neq 0$, we use the estimate $|\eta(t)-\eta(t_n)|\lesssim\tau_{\lambda}$ and  apply the inequality \eqref{eq:rough_estimate} repeatedly. This gives
	\begin{align}\label{eq:u-eu}
		\left\|\mathcal{N}_n^{\ell}(t)\right\|_{L^2_x}
		&\lesssim \tau^{\frac{1}{3}}_{\lambda}N_{\lambda}^{-s}\|\phi_{\lambda}\|_{H^s}+\tau_{\lambda}N_{\lambda}^{\frac{3}{2}}\|\Pi_{N_{\lambda}}\U^{\ell-1}\|^2_{L^\infty_t L^2_x}+\tau_{\lambda}N_{\lambda}^{\frac{3}{2}}\|\Pi_{N_{\lambda}}\F(t,\U^{\ell-1})\|^2_{L^\infty_t L^2_x}\notag\\
		&\lesssim \tau^{\frac{1}{3}}_{\lambda}N_{\lambda}^{-s}\|\phi_{\lambda}\|_{H^s}+\tau_{\lambda}N_{\lambda}^{\frac{3}{2}}\|\Pi_{N_{\lambda}}\U^{\ell-1}\|^2_{L^\infty_t L^2_x}+\Big(\tau_{\lambda}N_{\lambda}^{\frac{3}{2}}\Big)^3\|\Pi_{N_{\lambda}}\U^{\ell-1}\|^4_{L^\infty_t L^2_x},
	\end{align}
	for all $t\in[t_n,t_{n+1}]$, where we have used \eqref{eq:rough_estimate} for $\|\Pi_{N_{\lambda}}\F(t,\U^{\ell-1})\|_{L^\infty_t L^2_x}$ in the last inequality. Substituting \eqref{eq:u-eu} into \eqref{eq:estimate_I2} and applying Lemma~\ref{lem:embedding_Ys} and Lemma~\ref{lem:Bernstein}, 
	we derive
	\begin{align}\label{eq:I_2}
		I_2&\lesssim  \tau_{\lambda}^{\frac{8}{3}} N_{\lambda}^{3-2s}\|\phi_{\lambda}\|_{H^s}^2\|\Pi_{N_{\lambda}}\U^{\ell-1}\|^2_{L^\infty_t L^2_x}+\left(\tau_{\lambda}^2 N_{\lambda}^{3}\right)^2 \|\Pi_{N_{\lambda}}\U^{\ell-1}\|^6_{L^\infty_t L^2_x}\notag\\
		&\quad+\left(\tau_{\lambda}^2 N_{\lambda}^{3}\right)^4 \|\Pi_{N_{\lambda}}\U^{\ell-1}\|^{10}_{L^\infty_t L^2_x}\notag\\
		&\lesssim \tau_{\lambda}^{\frac{8}{3}} N_{\lambda}^{3-4s}\|\phi_{\lambda}\|_{H^s}^2\|\U^{\ell-1}\|^2_{Y^s}+\left(\tau_{\lambda}^2 N_{\lambda}^{3}\right)^2 N_{\lambda}^{-6s}\|\U^{\ell-1}\|^6_{Y^s}+\left(\tau_{\lambda}^2 N_{\lambda}^{3}\right)^4 N_{\lambda}^{-10s} \|\U^{\ell-1}\|^{10}_{Y^s}.
	\end{align}
	Similarly, for $I_3$, we  use the inequalities \eqref{eq:rough_estimate} and \eqref{eq:u-eu} to obtain
	\begin{align}\label{eq:I_3}
		I_3&\lesssim \tau_{\lambda}^2 N_{\lambda}^{3}
		\big(\sup_{n}
		\left\|\mathcal{N}_n^\ell(t)\right\|^2_{L^{\infty}_t(t_n,t_{n+1};L^2(0,\lambda))}\big)^2\notag\\
		&\lesssim \tau_{\lambda}^{\frac{10}{3}} N_{\lambda}^{3-4s}\|\phi_{\lambda}\|_{H^s}^4+\left(\tau_{\lambda}^2 N_{\lambda}^{3}\right)^3 N_{\lambda}^{-8s}\|\U^{\ell-1}\|^8_{Y^s}+\left(\tau_{\lambda}^2 N_{\lambda}^{3}\right)^7 N_{\lambda}^{-16s}\|\U^{\ell-1}\|^{16}_{Y^s}.
	\end{align}
	Now, substituting \eqref{eq:I_1}, \eqref{eq:I_2} and \eqref{eq:I_3} into \eqref{eq:F_L2L2}, 
	we derive the following estimate:
	\begin{align}\label{eq:F_L2L2_new}
		\left\|\eta(t)\Pi_{N_{\lambda}}\F(t,\U^\ell)\right\|_{L^2_t L^2_x}
		& \lesssim \tau_{\lambda}N_{\lambda}^{1-2s}\left\|\U^{\ell}\right\|^2_{X_{s,\frac{1}{2}}}+\tau_{\lambda}^\frac{4}{3} N_{\lambda}^{\frac{3}{2}-2s}\|\phi_{\lambda}\|_{H^s}\|\U^{\ell-1}\|_{Y^s}\notag\\
		&\quad+\tau_{\lambda}^{\frac{5}{3}} N_{\lambda}^{\frac{3}{2}-2s}\|\phi_{\lambda}\|_{H^s}^2+\sum_{j=2}^{7}\tau_{\lambda}^{j}N_{\lambda}^{[\frac{3j}{2}-(j+1)s]}\|\U^{\ell-1}\|^{j+1}_{Y^s}.
	\end{align}
	Combining \eqref{eq:A_1} with the inequality $\|\U^{\ell}\|_{X_{s,\frac{1}{2}}}\leq \|\U^\ell\|_{Y^s}$ leads to the estimate for $ \mathcal{A}_1(\U^\ell)$:
	\begin{align}\label{eq:A_1_Ys}
		\|\mathcal{A}_1(\U^\ell)\|_{Y^s}
		&\lesssim \tau_{\lambda}N_{\lambda}^{2-2s}\left\|\U^{\ell}\right\|^3_{Y^s}+\tau_{\lambda}^\frac{4}{3} N_{\lambda}^{\frac{5}{2}-2s}\|\phi_{\lambda}\|_{H^s}\|\U^{\ell-1}\|_{Y^s}\left\|\U^{\ell}\right\|_{Y^s}\notag\\
		&\quad+\tau_{\lambda}^\frac{5}{3} N_{\lambda}^{\frac{5}{2}-2s}\|\phi_{\lambda}\|_{H^s}^2\left\|\U^{\ell}\right\|_{Y^s}+\sum_{j=2}^{7}\tau_{\lambda}^{j}N_{\lambda}^{[1+\frac{3j}{2}-(j+1)s]}\|\U^{\ell-1}\|^{j+1}_{Y^s}\left\|\U^{\ell}\right\|_{Y^s}\notag\\
		& =:\mathcal{C}\left(s,\tau_{\lambda},N_{\lambda},\left\|\U^{\ell}\right\|_{Y^s},\|\U^{\ell-1}\|_{Y^s}\right).
	\end{align}
	
	{\sc Estimate of $\|\mathcal{A}_2(\U^\ell)\|_{Y^s}$:} We estimate $\|\mathcal{A}_2(\U^\ell)\|_{Y^s}$ using the same approach as in \eqref{eq:estimate_A_1}. Specifically, we have
	\begin{align}\label{eq:A_2}
		\|\mathcal{A}_2(\U^\ell)\|_{Y^s}
		&\lesssim N_{\lambda}^{1+s}
		\big\|\eta^{\frac{1}{2}}(t)\Pi_{N_{\lambda}}\F(t,\U^\ell)\big\|_{L^2_t L^2_x}
		\big\|\eta^{\frac{1}{2}}(t)\Pi_{N_{\lambda}}\F(t,\U^\ell)\big\|_{L^4_t L^4_x}.
	\end{align}
	Using the scaling invariance of the Sobolev inequality and the fact that the support of $\eta$ is contained within the interval $[-2,2]$, we obtain the following estimate:
	\begin{align*}
		\left\|\eta^{\frac{1}{2}}(t)\Pi_{N_{\lambda}}\F(t,\U^\ell)\right\|_{L^4_t L^4_x}\lesssim \left\|\Pi_{N_{\lambda}}\F(t,\U^\ell)\right\|_{L^\infty_t L^4_x}\lesssim \left\|\Pi_{N_{\lambda}}\F(t,\U^\ell)\right\|_{L^\infty_t H^{\frac{1}{4}}_x}.
	\end{align*}
	Thus, applying Bernstein's inequality and using the estimate \eqref{eq:rough_estimate}, we obtain
	\begin{align}\label{eq:F_L4L4}
		\left\|\eta^{\frac{1}{2}}(t)\Pi_{N_{\lambda}}\F(t,\U^\ell)\right\|_{L^4_t L^4_x}\lesssim N_{\lambda}^{\frac{1}{4}}\tau_{\lambda}N_{\lambda}^{\frac{3}{2}}\left\|\Pi_{N_{\lambda}} \U^{\ell}\right\|^2_{L_{t}^{\infty}L_{x}^2}\lesssim \tau_{\lambda}N_{\lambda}^{\frac{7}{4}-2s}\left\| \U^{\ell}\right\|^2_{Y^s}.
	\end{align}
	Combining this result with \eqref{eq:F_L2L2_new} and \eqref{eq:A_2} yields the following estimate
	\begin{align}\label{eq:A_2_Ys}
		&\|\mathcal{A}_2(\U^\ell)\|_{Y^s}\lesssim \tau_{\lambda}N_{\lambda}^{\frac{7}{4}-s}\left\|\U^{\ell}\right\|_{Y^s}\cdot \mathcal{C}\left(s,\tau_{\lambda},N_{\lambda},\left\|\U^{\ell}\right\|_{Y^s},\|\U^{\ell-1}\|_{Y^s}\right),
	\end{align}
	where $\mathcal{C}\left(s,\tau_{\lambda},N_{\lambda},\left\|\U^{\ell}\right\|_{Y^s},\|\U^{\ell-1}\|_{Y^s}\right)$ is a function defined by \eqref{eq:A_1_Ys}.
	
	By substituting \eqref{eq:A_1_Ys} and \eqref{eq:A_2_Ys} into \eqref{eq:u_mp1}, we get the estimate for $\|\U^{\ell+1}\|_{Y^s}$ as follows
	\begin{align}\label{eq:u_mp1_Ys}
		\|\U^{\ell+1}\|_{Y^s}\leq& C_0\|\phi_{\lambda}\|_{H^s}+C\lambda^{0+}\left\|\U^\ell\right\|^2_{X_{s,\frac{1}{2}}}\notag\\
		&+C\Big(1+\tau_{\lambda}N_{\lambda}^{\frac{7}{4}-s}\left\|\U^{\ell}\right\|_{Y^s}\Big)\mathcal{C}\left(s,\tau_{\lambda},N_{\lambda},\left\|\U^{\ell}\right\|_{Y^s},\|\U^{\ell-1}\|_{Y^s}\right).
	\end{align}
	Next, we apply the inductive hypothesis $\|\U^\ell\|_{Y^s},\|\U^{\ell-1}\|_{Y^s}\leq C^{\#}\|\phi_{\lambda}\|_{H^s}$ along with relations $\|\phi_{\lambda}\|_{H^s}\leq\lambda^{-\frac{3}{2}-s}\|\phi\|_{H^s}$, $\tau_{\lambda}=\lambda^3\tau$ and $N_{\lambda}=\lambda^{-1}N$ to obtain the following estimates:
	\begin{align}
		\lambda^{0+}\left\|\U^\ell\right\|^2_{X_{s,\frac{1}{2}}}&\leq \left[(C^{\#})^2\lambda^{-\frac{3}{2}-s+}\|\phi\|_{H^s}\right]\cdot\|\phi_{\lambda}\|_{H^s},\label{eq:bound_constant_1}\\
		\tau_{\lambda}N_{\lambda}^{\frac{7}{4}-s}\left\|\U^{\ell}\right\|_{Y^s}&\leq \left[C^{\#}\lambda^{-\frac{1}{4}}\tau N^{\frac{7}{4}-s}\right]\cdot\|\phi\|_{H^s},\label{eq:bound_constant_2}
	\end{align}
	and
	\begin{align}\label{eq:bound_constant_3}
		&\mathcal{C}\left(s,\tau_{\lambda},N_{\lambda},\left\|\U^{\ell}\right\|_{Y^s},\|\U^{\ell-1}\|_{Y^s}\right)\notag\\
		&\leq (C^{\#})^3 \|\phi_{\lambda}\|_{H^s} 
		\Big[\lambda^{-2}\tau N^{2-2s}\|\phi\|_{H^s}^2 
		+ \lambda^{-\frac{3}{2}}\tau^{\frac{4}{3}} N^{\frac{5}{2}-2s}\|\phi\|^2_{H^s}\notag\\
		&\quad+\lambda^{-\frac{1}{2}}\tau^{\frac{5}{3}} N^{\frac{5}{2}-2s}\|\phi\|^2_{H^s}
		+\sum_{j=2}^{7} \lambda^{-\frac{5}{2}}\tau^{j}N^{[1+\frac{3j}{2}-(j+1)s]}\|\phi\|^{j+1}_{H^s}\Big].
	\end{align} 
	It is easy to observe that the coefficients of term $\|\phi_{\lambda}\|_{H^s}$ in the inequalities \eqref{eq:bound_constant_1}--\eqref{eq:bound_constant_3} all contain negative powers of the scaling constant $\lambda$. Moreover, under condition $N\lesssim \tau^{-\frac{1}{2-2s}}$ for $s\in(-\frac{1}{2},0]$, the following boundedness property holds:
	\begin{align*}
		\max \left\{\tau N^{\frac{7}{4}-s},\; \tau N^{2-2s},\; \tau^{\frac{4}{3}} N^{\frac{5}{2}-2s}, \;\tau^{\frac{5}{3}} N^{\frac{5}{2}-2s}\|\phi\|^2_{H^s},\; \max_{j\in\{2,\cdots,7\}}\tau^{j}N^{[1+\frac{3j}{2}-(j+1)s]}\right\}\lesssim 1.
	\end{align*}
	Thus, by combining \eqref{eq:u_mp1_Ys},  there exists a constant 
	$\lambda_0>1$ independent of $\tau$ and $N$ but possibly dependent on $\|\phi\|_{H^s}$ such that for $\lambda\geq \lambda_0$,   the following estimate holds:
	\begin{align}
		\|\U^{\ell+1}\|_{Y^s}\leq2C_0 \|\phi_{\lambda}\|_{H^s}=C^{\#}\|\phi_{\lambda}\|_{H^s}.
	\end{align}
	Thus, by induction, we have proven the conclusion of Lemma~\ref{lem:boundedness_um}.
\end{proof}

\end{document}